\newcommand{\pow}{\ensuremath{\mathscr{P}}}
\def\leng{{\rm lh}}
\newcommand{\Bai}{\ensuremath{{}^\omega \omega}}
\newcommand{\seqo}{\ensuremath{{}^{<\omega}\omega}}
\newcommand{\AD}{\ensuremath{{\rm \mathsf{AD}}}}
\newcommand{\ZF}{\ensuremath{{\rm \mathsf{ZF}}}}
\newcommand{\DCR}{\ensuremath{{\rm \mathsf{DC}(\mathbb{R})}}}
\newcommand{\ACOR}{\ensuremath{{\rm \mathsf{AC}_\omega(\mathbb{R})}}}
\newcommand{\BP}{\ensuremath{{\rm \mathsf{BP}}}}
\newcommand{\AC}{\ensuremath{{\rm \mathsf{AC}}}}
\newcommand{\ADL}{\ensuremath{{\rm \mathsf{AD^L}}}}
\newcommand{\ADW}{\ensuremath{{\rm \mathsf{AD^W}}}}
\newcommand{\SLOW}{\ensuremath{{\rm \mathsf{SLO^W}}}}
\newcommand{\SLO}{\ensuremath{{\rm \mathsf{SLO}}}}
\newcommand{\bt}{\ensuremath{{\rm \mathsf{bt}}}}
\newcommand{\p}{\ensuremath{{\rm \mathsf{p}}}}
\newcommand{\pI}{\ensuremath{{\rm \mathbf{I}}}}
\newcommand{\pII}{\ensuremath{{\rm \mathbf{II}}}}
\newcommand{\conc}{{}^\smallfrown}
\newcommand{\id}{{\rm id}}
\newcommand{\ax}[1]{\ensuremath{{\rm \mathsf{#1}}}}
\newcommand{\imp}{\Rightarrow}
\newcommand{\bSigma}{\mathbf{\Sigma}}
\newcommand{\bPi}{\mathbf{\Pi}}
\newcommand{\bGamma}{\ensuremath{\mathbf{\Gamma}}}
\newcommand{\bDelta}{\mathbf{\Delta}}
\newcommand{\bN}{\mathbf{N}}
\newcommand{\F}{\mathcal{F}}
\newcommand{\G}{\mathcal{G}}
\newcommand{\B}{\mathcal{B}}
\newcommand{\Lip}{\ensuremath{{\mathsf{Lip}}}}
\renewcommand{\L}{\ensuremath{{\mathsf{L}}}}
\newcommand{\W}{\ensuremath{{\mathsf{W}}}}
\newcommand{\restr}[2]{#1 \restriction #2}
\renewcommand{\sf}[1]{\ensuremath{{\mathsf{#1}}}}
\newcommand{\seq}[2]{\langle #1 \mid  #2 \rangle}
\newcommand{\onto}{\twoheadrightarrow}
\newtheorem{theorem}{Theorem}[section]
\newtheorem{lemma}[theorem]{Lemma}
\newtheorem{corollary}[theorem]{Corollary}
\newtheorem{proposition}[theorem]{Proposition}
\newtheorem{question}{Question}
\newtheorem{claim}{Claim}[theorem]
\theoremstyle{definition}
\newtheorem{defin}{Definition}
\theoremstyle{remark}
\newtheorem{remark}[theorem]{Remark}
\newtheorem{example}[theorem]{Example}
\begin{document}

\title{Game representations of classes of piecewise definable functions}
\author[L. Motto Ros]{Luca Motto Ros
\footnote{E-mail:~\textsf{luca.mottoros@libero.it},
          Phone: +43\,1\,4277\,50508,
          Fax: +43\,1\,4277\,50599}}
\date{\today}
\address{Kurt G\"odel Research Center for Mathematical Logic,
  University of Vienna,  W\"ahringer Stra{\ss}e 25,  
 A-1090 Vienna, 
Austria}
\keywords{Borel function, Baire function, projective
  function, measurable function, game, determinacy, Wadge
  hierarchy}
\subjclass[msc2000]{03E15, 03E60}

\begin{abstract}
  We  present a general way of defining various \emph{reduction games} 
on $\omega$ which ``represent'' corresponding topologically defined
classes of functions. 
In
particular,  we will show how to construct games for
piecewise defined functions, for functions which are pointwise limit of
certain sequences of functions and for $\bGamma$-measurable
functions. These games turn out to be useful as a combinatorial tool
for the study of general reducibilities for subsets of the Baire space 
\cite{mottorosborelamenability}.
\end{abstract}

\maketitle

\section{Introduction}

The first reduction games which have appeared in the literature are perhaps
the Lipschitz game $G_\L$ and the Wadge game $G_\W$ (both were defined
by Wadge in his Ph.D.\ Thesis, see \cite{wadgethesis}). They are a
special kind  of
infinite two-player zero sum games 
on $\omega$ with perfect information, and are designed in such a way
that if player $\pII$ has a 
\emph{legal} strategy $\tau$  
in $G_\L$  (resp.\ $G_\W$) then from $\tau$ it can be recovered 
in a canonical and fixed way
 a function $f_\tau$ from the Baire space $\Bai$ into itself which is 
Lipschitz with constant $1$ (resp.\ continuous). Conversely, given 
a Lipschitz with constant $1$ (resp.\ continuous) function $f \colon  \Bai \to \Bai$ one
can construct 
a legal strategy $\tau$ for player $\pII$ in the corresponding game such that
$f = f_\tau$.
These games were introduced to study the relations (also called
\emph{reducibilities})
$\leq_\L$ and $\leq_\W$, where for every $A,B \subseteq \Bai$
\begin{align*}
 A \leq_\L B \text{ (resp.\ }A \leq_\W B \text{)}\iff A = f^{-1}(B) 
& \text{ for some Lipschitz with constant } 1 \\
&\text{ (resp.\ continuous) function }f.
 \end{align*}
The link between these preorders and the corresponding games is the following:
given  
$A, B \subseteq \Bai$, a payoff set for $G_\L$ (resp.\ $G_\W$) is
canonically constructed 
(see Section \ref{sectionreductiongames}) in such a way that player
$\pII$ has a \emph{winning} 
strategy in $G_\L(A,B)$ 
(resp.\ $G_\W(A,B)$) if and only if $A \leq_\L B$ (resp.\ $A \leq_\W B$).
Assuming the \emph{Axiom of Determinacy} $\AD$, or even just the determinacy of
the corresponding games $G_\L$ and $G_\W$, 
Wadge proved that both $\leq_\L$ and $\leq_\W$ induce 
well-behaved stratifications of the subsets
of $\Bai$ which have turned out to be very useful in
various parts of 
Set Theory (see e.g.\ \cite{andrettaslo, mottorosborelamenability}).

Some years later, Van Wesep defined, building on work of Wadge,
another reduction  
game, 
the \emph{backtrack game} $G_\bt$, but at that time it was not clear
which should be 
the ``topological'' class of functions $\F$ corresponding 
to  legal strategies for player $\pII$ in 
$G_\bt$. 
It was Andretta who solved this problem in \cite{andrettamoreonwadge},
by showing that such $\F$ 
is exactly  
the collection of those $f\colon  \Bai \to \Bai$ for which there is a
partition $\seq{P_n}{n \in \omega}$ 
of $\Bai$ into closed sets such that $\restr{f}{P_n}$ is continuous
for every $n \in \omega$ (see \cite[Theorem 21]{andrettamoreonwadge}),
which in turn coincide with the  collection 
of the $\bDelta^0_2$-functions by a theorem of Jayne and Rogers
 (see e.g.\ \cite[Theorem 1.1]{mottorossemmes} for
a proof of this last result). 
Another reduction game, namely the \emph{eraser
  game} $G_\sf{E}$, was defined 
(essentially) by Duparc  in such a way that $f \colon \Bai \to \Bai$ is a
Baire class $1$ function  
if and only if there is some legal strategy $\tau$ for $\pII$ in 
$G_{\sf{E}}$ such that $f = f_\tau$. Finally, some work related to
this topic 
was developed in \cite{kechrisdeterminacy} (although in this case
there are no reduction games 
directly involved).

Having all these useful reduction games,
it is quite natural to ask if one could
also define  reduction games for other ``natural'' collections of
functions (this question 
was explicitly posed by Andretta in his \cite{andrettaslo}: ``Is there a
Wadge-style game for higher 
levels of reducibility, like $\bDelta^0_3$ and such?''). 
More precisely: say that a set of functions $\F$  is
\emph{playable} if there is  
some reduction game $G_*$ such that for every $f \colon \Bai \to
\Bai$, $f \in \F$ if and only if 
there is a legal strategy $\tau$ for $\pII$ in $G_*$ for which $f =
f_\tau$ (this notion 
will be completely formalized in  Section
\ref{sectionreductiongames}). Clearly not every set of
 functions is playable: for example, the collection of \emph{all}
 functions from $\Bai$ into 
 itself is not playable, as a simple cardinality argument shows (the
 strategies for
\emph{any} game on $\omega$ are always at most
$2^{\aleph_0}$). Nevertheless we can ask the following:

\begin{question}\label{question}
Which (topologically defined) classes of functions are playable? 
\end{question}

The motivation for this
problem mainly relies on the fact that the presence of a reduction game
provides combinatorial tools for the study of the reducibility induced
by the corresponding set 
of functions --- see e.g.\ \cite{wadgethesis, andrettamoreonwadge}.

The first partial answer to this general problem was
given by Semmes in \cite{semmesmultitapegames} and in his Ph.D.\ thesis 
\cite{semmesthesis}: there he proposed a game (called \emph{tree
  game}) which corresponds 
to the Borel functions, and some other games (the \emph{multitape
  game} $G_\sf{M}$, the \emph{multitape eraser} game $G_{\sf{ME}}$,
and the game $G_{1,3}(f)$) which correspond, respectively, 
to the functions strictly continuous on a
$\bPi^0_2$-partition, to the
functions which are of Baire class $1$ on a $\bPi^0_2$-partition, and
to the Baire class $2$ functions ---
see next section for the definitions of these classes of
functions. 

In this paper we somewhat extend these results providing a positive answer to Question
\ref{question} for  a wide class of subsets of the Borel functions, and
for $\bGamma$-measurable functions (where $\bGamma$ is any boldface
pointclass closed under countable unions and finite intersections):
therefore the paper is in some respect unusual for a research
publication in mathematics, as it mainly consists of definitions and
of proofs that these definitions are correct. Nevertheless, in the
last section we will also provide some applications of these games
which motivate our interest in this subject.  

The material of this paper (except for Section \ref{sectionGamma})
mainly comes from Sections 2.2. and 4.8 of the author's Ph.D.\ 
thesis 
\cite{mottorosthesis} or is obtained via minor variations of the
constructions contained therein, but for the reader's convenience (and to avoid
  confusions) in the present paper we have adapted most of the
  terminology and notation  used in \cite{mottorosthesis} to the one
  already used in \cite{semmesmultitapegames}, with the following
  exception: because of the 
  applications of reduction games to reducibilities for sets of reals
  given in Section \ref{sectionaxioms},  in this paper the payoff set
  of a reduction game  will
  be defined starting from two sets of reals (see Section
  \ref{sectionreductiongames}), whereas 
   in
  \cite{semmesmultitapegames} it was defined starting from a (partial)
  function from the reals into the reals (nevertheless, it is quite easy
  to see how to modify one kind of presentation into the other).

The constructions we are going to present rely on a very general way
of defining games for sets of 
functions which are piecewise defined,
for sets of functions which are (pointwise) limits of certain
sequences of functions, and for $\bGamma$-measurable functions: most
of the proofs involve some sort of operation for games which allows to
transform a sequence of already known reduction games (representing
some classes of functions) into a new reduction game which represent
the larger class of those functions piecewise in the old classes on a
definable partition, or the class of the pointwise limits of the old
functions.  

The paper is reasonably self-contained and is organized as follows: in
Section \ref{sectionpreliminaries} 
we will fix some notation, while in Section \ref{sectionreductiongames}
we will give
a precise definition of what should be meant by \emph{reduction game} and
\emph{playable} set of functions, and give some 
basic examples (both old and new). In Sections \ref{sectionplayable} and \ref{sectionGamma}
we  will prove our main results,
showing how to construct new reduction games (building
on other known games): this will give a ``uniform'' solution to our problem 
for almost all sets of functions involved in (generalizations of) Wadge's theory, for
$\bGamma$-measurable functions, and
for some other related sets of functions. In Section
\ref{sectionaxioms} we will give some examples of
how  to apply
the techniques arising from these games to the study of various 
reducibilities, 
and we will prove some relationships between the corresponding
determinacy axioms. Finally, in
Section \ref{sectionsmallness} we will give the optimal condition
under which constructions like those presented in Section
\ref{sectionplayable} can be 
carried out: even if this technical improvement allows to deal
with a strictly larger class of sets of functions, for the sake of simplicity we
have postponed it to the 
last section  because it complicates very much the
presentation without adding relevant ideas for the construction of the new
reduction games.

\section{Preliminaries and notation}\label{sectionpreliminaries}

In most of the applications involving games, one usually assumes $\AD$
(or some other axiom of this kind) and then uses the combinatorics
arising from the winning strategies in the games under consideration
to prove the desired results. However, $\AD$ (and, in general, all
known determinacy principles which are not restricted to the context
of a small definable pointclass, like the pointclass of Borel sets)
contradicts the full axiom of choice $\AC$, and 
therefore in presenting new games and their applications
one has to be careful and just use choice principles 
which do not contradict $\AD$. In this paper,  
we will always work in $\ZF+\ACOR$ except for Section
\ref{sectionaxioms}, in which we will sometimes need the \emph{Axiom
  of Dependent Choice (over the reals)} $\DCR$ and the axiom $\BP$,
that is the statement ``every set of reals has the Baire property''. 

Our notation is quite standard and
we refer the reader to the monograph \cite{kechris} for all the
undefined symbols and notions. 
 Given two sets $A$ and $B$, we will
denote by ${}^B A$ 
the collection of all functions from $B$ to $A$. Thus we will denote by
${}^\omega A$
the set of all $\omega$-sequences of elements of $A$, while the
collection of the \emph{finite} sequences of elements of $A$ will be denoted
by ${}^{<\omega}A$ (we will refer to the length of a finite sequence $s$
 with the symbol $\leng(s)$). 
As usual in Descriptive Set Theory,  the elements
of the Baire 
space $\Bai$ will be
called \emph{reals}. 
If $n,k \in \omega$ we will write $
n^{(k)}$ for the sequence $\langle
\underbrace{n,\dotsc,n}_k \rangle$ and $\vec{n}$ for
the $\omega$-sequence  
$\langle n,n,n, \dotsc \rangle$. 
For simplicity of notation,
we will also put $\bSigma^0_{<\xi} = \bigcup_{\mu < \xi} \bSigma^0_\xi$,
$\bPi^0_{<\xi} = \bigcup_{\mu<\xi} \bPi^0_\mu$ and 
$\bDelta^0_{<\xi} = \bigcup_{\mu<\xi} \bDelta^0_\mu$. Let $\langle \cdot,\cdot \rangle\colon
\omega \times \omega \to \omega$ be 
the  bijection
$\langle n,m \rangle =2^n(2m+1)-1$. Then we can define the homeomorphism
 (where ${}^\omega A$ is endowed with
the product topology of the discrete topology on $A$)
\[ \bigotimes \colon  {}^\omega ({}^\omega A) \to {}^\omega A \colon
\langle x_n\mid  n \in 
\omega \rangle \mapsto \bigotimes\nolimits_n x_n =
\seq{x_n(m)}{\langle n,m \rangle \in \omega} ,\]
and, conversely, the ``projections''
$\pi_n\colon  {}^\omega A \to {}^\omega A$ defined by $\pi_n(x) = \langle
x(\langle n,m \rangle)\mid m \in \omega \rangle$: clearly, every $\pi_n$
 is
surjective, continuous and open. 

Unless otherwise
specified, in what follows all  functions should be intended  
as \emph{partial} functions, 
i.e.\ defined just on some $X \subseteq \Bai$ (endowed with the
relative topology inherited from $\Bai$), and not 
necessarily on the whole space $\Bai$.
We denote by
 $\Lip(2^k)$ the collection of the Lipschitz functions with
constant less or equal than $2^k$, and put $\Lip =
\bigcup_{k \in \omega} \Lip(2^k)$.
Since they played a special role as reducibilities, we will denote by $\L$ the set $\Lip(1)$,
and by  $\W$ the set of all
continuous functions.
Moreover, the collection of the
Baire class $\xi$ functions (equivalently, $\bSigma^0_{\xi+1}$-measurable functions --- see the next paragraph for the definition)
will be denoted by $\B_\xi$.
Finally, given any nonzero countable ordinal $\xi$, we will denote
by $\sf{D}_\xi$ the collection of all \emph{$\bDelta^0_\xi$-functions}, i.e.\ 
of those $f \colon X \to \Bai$ such that $f^{-1}(D) \in
\bDelta^0_\xi(X)$ for every $D \in \bDelta^0_\xi$. 

A pointclass $\bGamma \subseteq \pow(\Bai)$ is said to be \emph{boldface} if it is closed
under continuous preimages, and is called \emph{$\bSigma$-pointclass} if
it is boldface and closed under countable unions and finite
intersections. A set $S 
\subseteq \Bai \times \Bai$ is said to be \emph{universal for
  $\bGamma$} if $\bGamma(\Bai) = \{ S_x \mid x \in \Bai\}$, where $S_x
= \{ y \in \Bai \mid (x,y) \in S\}$. A function
$f \colon X \to \Bai$ is \emph{$\bGamma$-measurable} if $f^{-1}(U) \in
\bGamma(X)$ for every open set $U$. The collection of such functions is
denoted by $\F_\bGamma$. Note that if $\bGamma$ is a
$\bSigma$-pointclass then $f \in \F_\bGamma$ if and only if
$f^{-1}(U_n) \in \bDelta_\bGamma(X)
= \bGamma(X) \cap \breve{\bGamma}(X)$ for any clopen subbasis $\{ U_n \mid n \in
\omega\}$ of the topology of $\Bai$. 
A \emph{$\bGamma$-partition} of $X \subseteq \Bai$ is a family
 $\langle D_n \mid n \in \omega \rangle$ of pairwise disjoint sets
 of $\bGamma(X)$ such that $X = \bigcup_{n \in \omega} D_n$. If
 $\bGamma$ is a $\bSigma$-pointclass then every $\bGamma$-partition
 of $X$ is automatically a $\bDelta_\bGamma$-partition. Given a
 sequence\footnote{When $\F_n = \F$ 
for every $n$ we will systematically use the symbol $\F$ instead of
$\vec{\F}$  in all the notation.}  $\vec{\F}  
=\F_0, \F_1, \dotsc$
of sets
of functions and a $\bSigma$-pointclass $\bGamma$, we will denote by
$\sf{D}_\bGamma^{\vec{\F}}$  the collection
of those $f 
\colon X \to \Bai$ for which there is a $\bGamma$-partition
(equivalently,
a $\bDelta_\bGamma$-partition) 
$\seq{D_n}{n \in \omega}$ of $X$ and
a sequence $f_0, f_1, \dotsc$ of functions \emph{with domain $X$} such
that $f_n \in \F_n$ and  
$\restr{f}{D_n} = \restr{f_n}{D_n}$ for every $n \in \omega$. 
If $\bGamma = \bSigma^0_\xi$ for some $\xi$, we will simply write
$\sf{D}^{\vec{\F}}_\xi$ instead of $\sf{D}^{\vec{\F}}_{\bSigma^0_\xi}$ (in this case we
can replace ``$\bDelta^0_\xi$-partition'' with
``$\bPi^0_{<\xi}$-partition'' in the definition above).
In particular, a function in $\sf{D}^\W_\xi$ will be said
\emph{continuous on a $\bDelta^0_\xi$-partition} 
(equiv.\ \emph{on a $\bPi^0_{<\xi}$-partition}).
The following minor variation of the previous definition
in general gives a different 
set of functions --- see \cite[Remark 6.2]{mottorosborelamenability}:
$\tilde{\sf{D}}_\bGamma^{\vec{\F}}$ (resp.\
$\tilde{\sf{D}}_\xi^{\vec{\F}}$) denotes 
 the collection of those $f \colon X \to \Bai$ for which there is a
 $\bDelta_\bGamma$-partition   (resp.\ a $\bDelta^0_\xi$-partition or,
 equivalently, a $\bPi^0_{<\xi}$-partition) 
$\seq{D_n}{n \in \omega}$ of $X$  such that 
$\restr{f}{D_n} \in \F_n$ for every $n \in \omega$.
A function in
$\tilde{\sf{D}}_\xi^\W$ will be said 
\emph{strictly continuous on a $\bDelta^0_\xi$-partition} (equiv.\ \emph{on
  a $\bPi^0_{<\xi}$-partition}). 
Both the previous definitions are useful: for instance $\sf{D}^\W_\xi$
has been  
used in \cite[Section 6]{mottorosborelamenability} as a natural example of
Borel-amenable reducibility, 
while $\tilde{\sf{D}}^\W_3$ has been used in \cite[Section 5.2]{semmesthesis}
to find a generalization for the level $3$ of the theorem of Jayne and
Rogers mentioned in the introduction, i.e.\ to show that $\sf{D}_3  =
\tilde{\sf{D}}^\W_3$.
Finally, given $\vec{\F}$ as above, we will denote by $\lim
\vec{\F}$ the collection of those 
$f \colon X \to \Bai$ for which there is a sequence of functions $f_0,
f_1, \dotsc$ with domain $X$ such that 
$f_n \in \F_n$ and $f$ is the \emph{pointwise} limit of the sequence 
$\seq{f_n}{n \in \omega}$.

In Section \ref{sectionaxioms}, we will deal with various
reducibilities for sets of reals and with the corresponding hierarchies of
complexity of $\pow(\Bai)$. Therefore for all the terminology and the
results about these concepts we refer the reader to
\cite{mottorosborelamenability} --- in fact we suggest to keep a copy of that
paper while reading that section in order to compare the various results with
the combinatorial arguments proposed here. The unique modification is
that here we will sometimes 
consider reductions from some $X \subseteq \Bai$ to $\Bai$: given such an $X$,
a set of functions $\F$ with domain $X$, and sets $A, B \subseteq \Bai$,  
we say that $A$
is \emph{$\F$-reducible} to $B$ (in symbols $A \leq_\F B$) just in case
there is some $f \in \F$ such that \emph{for every $x \in X$}
\[ x \in A\iff f(x)\in B,\]
that is $A \cap X = f^{-1}(B)$. 
Finally, given a set $\F$ of \emph{totally defined} functions, recall
that the \emph{Semi-Linear Ordering Principle} for $\F$  
(denoted by $\SLO^\F$) is 
the statement 
\[ \forall A,B \subseteq \Bai ({A \leq_\F B} \vee {\neg B \leq_\F
  A}),\]
where $\neg B$ denotes  $\Bai \setminus B$.

\section{Reduction games}\label{sectionreductiongames}

As recalled in the introduction, the first examples of reduction games
are  $G_\L$, $G_\W$, $G_\bt$  and $G_{\sf{E}}$ (defined in  
\cite[pp.\ 72 and 64]{wadgethesis}, \cite[p.\ 86]{vanwesepthesis}, and
\cite[p.\ 69]{duparc}, respectively), which represent,
respectively, the classes of functions $\L$, $\W$, $\sf{D}_2$ and
$\B_1$ (see e.g.\ \cite[Theorem B8]{wadgethesis} and \cite[Theorems 3.1, 4.1,  and
5.1]{semmesmultitapegames}). Here we just give a brief and informal
description of the rules of these games.
In $G_\L$, both $\pI$ and  
$\pII$ have to play a natural number at each of their turns. 
The game $G_\W$ is a variation of $G_\L$ in which $\pII$ has the
further option 
of ``passing'' (i.e.\ skipping her move at some turn), but with the condition
that at the end 
of the run she has played infinitely many natural numbers, i.e.\  she
has enumerated a real.
The backtrack game $G_\bt$ is a further variation of $G_\W$ in
which $\pII$ can still pass (with the same condition above), but even
\emph{backtrack},
i.e.\ she can delete all her previous moves at once and start to play
natural numbers (or pass) anew,  
with the restriction that in each run she can use this option only
finitely many times (this guarantees  
that at the end of the run she has indeed played some real). Finally,
the eraser game $G_{\sf{E}}$  can be described in the following way:
$\pI$ must play a natural number on each of his turn, while
$\pII$ can either play a natural number or erase the last natural number
which appears on her board, but to guarantee that at the end of each
run $\pII$ has indeed played a real,
 we require  
that for each $x \in X$ and each $n \in \omega$ there must be some $m$
such that for every $k \geq m$ we have $\leng(t_k) \geq n$, where $t_k$
is the sequence of natural numbers that $\pII$ has played (after
possible erasings) when $\pI$ has enumerated $\restr{x}{k}$. In other
words, $\pII$ has to enumerate a real $y \in \Bai$ and she has the
option of changing the $n$-th digit of $y$ at any time, but for each
$n$ she can take this option only finitely many times. 

Many other games (both old and new) can be obtained by modifying one of these
games, for example: 

\begin{itemize}

\item a simple variation of $G_\W$
 leads to the game $G_{k\text{-}\Lip}$, 
in which $\pII$ can still pass but \emph{at most} $k$ times in a run (this is also
equivalent to requiring that $\pII$ pass for the first $k$ turns and then
 plays the rest of the game without passing any more\label{kLip}): as
 we will see in Proposition
\ref{theorLipstrategies}, the strategies
 for $\pII$ in $G^X_{k\text{-}\Lip}$ 
induce exactly the functions in $\Lip(2^k)$; 

\item a variation of $G_\bt$ leads instead to the game
$G_{\Lip\text{-}\bt}$ in which $\pII$ can still backtrack finitely
many times but is no more allowed to pass: legal strategies for
$\pII$ in $G_{\Lip\text{-}\bt}$ induce exactly the functions in
$\sf{D}^\Lip_2$ (which is a proper
subset of $\sf{D}_2$ --- see \cite[pp.\ 45-46]{mottorosborelamenability});

\item another variation of $G_\W$ leads to the multitape game
  $G_\sf{M}$ defined 
  by Semmes in \cite[p.\ 202]{semmesmultitapegames}, which  can be described as
  follows: $\pI$ has to play his numbers on a single rows, while $\pII$ has
  to play her numbers on a table containing $\omega$-many rows. At each 
  turn, $\pI$ plays 
  a natural number on his unique row, while $\pII$ has first to choose one
  of her rows and then either pass or play a new natural number on it, with the
  condition that in each run of the game she has to 
  play an infinite amount of  numbers on exactly one of her rows: by
  Andretta-Semmes' \cite[Theorem 6.1]{semmesmultitapegames}, legal
  strategies for $\pII$ in this game induce the functions in 
  $\tilde{\sf{D}}_2$;

\item the game $G_{\sf{E}}$ can be ``iterated'' by using $n$ eraser
  operators ranked with a priority in order to obtain
games $G_{\B_n}$ whose legal strategies for $\pII$ induce exactly  
the Baire class $n$ functions.
\end{itemize}

Various other reduction games, like the
multitape eraser game $G_{\sf{ME}}$ defined in
\cite[Section 7]{semmesmultitapegames} which
represents $\sf{D}^{\B_1}_2$, can be obtained in a similar way.
However, each of 
these games seems 
to be strictly related to the particular presentation of the game
itself, and therefore it seems difficult to guess which should be the
definition of a game representing a more
complex class of functions, like e.g.\ $\tilde{\sf{D}}^\W_{\omega^{\omega+3}}$,
$\sf{D}^{\B_{28}}_{\omega^2+9}$, or 
$\F_{\bSigma^1_7}$. In order to have a uniform
approach to the problem of representing classes of functions by means of games,
it is useful to first abstractly define the notion of \emph{reduction
  game} (see Subsection \ref{sectionredg}), to isolate some basic examples of such games (like
the already defined $G_\W$, or games representing class of the form
$\F_\bGamma$ --- see Subsection \ref{sectionexamples} and Section
\ref{sectionGamma}), and then find some operations 
which correspond to the analogous topological operations used in the
definition of the new classes of functions, like  the operation of taking
pointwise limits, or of giving piecewise definitions on a definable
partition of the space (this is 
done in Sections \ref{sectionplayable} and \ref{sectionsmallness}).

\subsection{Reduction games and playable sets of functions}
\label{sectionredg}

 A reduction game is a tuple $G_* = (X,M_*,R_*,\iota_*)$ (where $*$ is
some symbol which identify the game) such that $X \subseteq \Bai$,
$M_*$ is a countable set disjoint from $\omega$ (called \emph{set of
  moves}), $R_* \subseteq \Bai \times {}^\omega (\omega \cup M_*)$ (called
\emph{set of rules}) and $\iota_*$ is a function from $R_*$ into
$\Bai$ (called \emph{interpretation function}). 

The rules of the game are as follows: $\pI$ plays elements of
$\omega$, while $\pII$ plays elements of $\omega \cup M_*$, so that
after $\omega$-many turns $\pI$ will have produced a real $x \in \Bai$
while $\pII$ will have produced $y \in {}^\omega(\omega \cup M_*)$
($y$ is called \emph{complete play} of $\pII$). Then we give the
following two conditions:

\begin{enumerate}[(1)]
 \item if $x \notin X$ then $\pI$ loses;
\item if $x \in X$ but   $(x,y) \notin R_*$ then $\pII$ loses.
\end{enumerate}

If $\sigma$ is a strategy for $\pI$ and $y$ is the complete play of
$\pII$ in a run of $G_*$, then $\sigma * y$ denotes the real
enumerated by $\pI$ while following $\sigma$ against $y$. 
Similarly, if $\tau$ is a strategy for $\pII$
and $x \in \Bai$, 
we denote by $x * \tau$ the complete play produced by $\pII$ in the run of 
$G_*$ in which $\pI$ 
enumerates $x$ and $\pII$ plays according to $\tau$
($\sigma * t$ and  $s * \tau$ are defined in a similar way for every
$t \in {}^{< \omega}(\omega \cup M_*)$ and $\emptyset \neq s 
\in \seqo$). A strategy $\sigma$ for $\pI$ is said to be \emph{legal}
if $\sigma * y \in X$ for every $y \in {}^\omega(\omega \cup M_*)$,
while a strategy $\tau$ for $\pII$ is said to be \emph{legal} if $(x,x *
\tau) \in R_*$ for every $x \in X$. The collection of legal strategies
for $\pII$ in $G_*$ will be denoted by $\sf{LS}_*$. 

Given $A,B \subseteq \Bai$, $G_*(A,B)$ is defined by the following
winning condition: If neither (1) or (2) have occurred, then $\pII$
wins if and only if $x \in A \iff \iota_*(x,y) \in B$ (in this case
$A,B$ are called \emph{payoff sets} of $G_*(A,B)$ and $\iota_*(x,y)$ is
called \emph{play} or \emph{output real} of $\pII$). 
A strategy (for either $\pI$ or $\pII$) is said to be
\emph{winning} in the game $G_*(A,B)$ if it is legal and always
guarantees the victory of the 
corresponding player, whatever his or her opponent plays. 

Notice that every $\tau \in \sf{LS}_*$ canonically induces the unique 
function
\[f_\tau \colon  X \to \Bai \colon  x \mapsto \iota_*(x,x*\tau),\]
(in this case we will say that $\tau$ \emph{represents} $f$), whereas
for some function $f \colon X \to \Bai$ there can be  
distinct $\tau,\tau' 
\in \sf{LS}_*$  such that $f = f_\tau =
f_{\tau'}$. We will put $\F_* = \{ f \colon X \to \Bai \mid f
= f_\tau \text{ for some }\tau \in \sf{LS}_*\}$. With this notation,
$\pII$ has a 
\emph{winning} strategy in the game $G_*(A,B)$ if and only if $A$
is \emph{$\F_*$-reducible} to $B$: this
is why the games described above are called \emph{reduction}
games.

Usually, the topological definition of a certain class of functions
is virtually independent from the particular domain of the functions
under consideration (apart from its topology, of course), meaning that
the definition of such class uses $X$ just as a sort of
parameter. For example, 
a function is said to be continuous if the preimage of an open set is
still open, and this definition does not involve any other information
on the domain of the function except for its topology. Therefore, to
have a decent notion of \emph{representation} of a (topologically
defined) class  of functions $\F$ (with domain arbitrary subsets of
$\Bai$) by means of a certain \emph{set} of
reduction games  $\mathcal{G}_* = \{ G_* = (X, M_*, R_*, \iota_*) \mid
X \subseteq \Bai\}$, it seems natural to stipulate by convention that
all the games in $\mathcal{G}_*$ share the same set of moves, set of
rules and interpretation function, or at least\footnote{We will take
  this second option just in Section \ref{sectionGamma}, when
  considering games related to boldface pointclasses $\bGamma$
  larger than the collection of Borel sets.} that all sets of rules
 (and consequently all the interpretation
functions) of the games in $\mathcal{G}_*$ are defined by a single
formula which uses 
$X$ as a parameter. Such classes of games are said to be
\emph{parametrized}. With a little abuse of  
notation, if $\mathcal{G}_*$ is a parametrized class of games we will
denote by 
$\F_*$ again the collection of all functions induced by the legal strategies
\emph{in one of the games of $\mathcal{G}_*$}.

\begin{defin}
  Let $\F$ be any set of functions from subsets of $\Bai$ into
  $\Bai$. We say that $\F$ is \emph{playable} if   there is 
  a parametrized class $\mathcal{G}_*$ of reduction games such  that $\F =
  \F_*$. In this case, we also say that the class $\mathcal{G}_*$
  \emph{represents} $\F$. 
\end{defin}

Finally, for $G_* = (\Bai,M_*, R_*, \iota_*)$ we will denote by
$\AD(G_*)$ (or simply $\AD^*$) the 
statement: ``for all $A,B \subseteq \Bai$, the game $G_*(A,B)$ is
determined (i.e.\ either $\pI$ or $\pII$ has a winning
strategy)''. $\AD^*$ is obviously a consequence of $\AD$ (to see this
it is enough to ``code'' in the natural way the reduction game $G_*$
into a classical game on $\omega$). Moreover, if $R_*$ and $\iota_*$
are not too complicated, the above implications have also ``local
versions''. For example, if $R_*$ is a Borel subset of
$\Bai \times {}^\omega(\omega \cup M_*)$ (endowed with the product 
topology) and $\iota_*$ is Borel, then Borel determinacy is sufficient
to have that for every \emph{Borel} $A,B \subseteq \Bai$ the game
$G_*(A,B)$ is determined (and, more generally, local versions of $\AD$
imply local versions of $\AD^*$). As we will see, if $\F$ is a
playable subset of the Borel functions it is in practice always the
case that the parametrized class $\mathcal{G}_*$ of reduction games 
which represents $\F$ has a Borel set of rules and a Borel interpretation
function (defined independently from $X$).

\subsection{Some examples} \label{sectionexamples}

Now we want to give some examples on how to formalize the games
presented at the beginning of this section into reduction games. As the
names suggest, the set $M_*$ will be used to code the alternative
moves  (like ``pass'', ``backtrack'', ``erase'', and so on) of $\pII$,
$R_*$ will be used to code the rules of the game (that is the rules
that $\pII$ must respect in order to have a chance of victory), and
$\iota_*$ will be used to recover from the (play of $\pI$ and the)
complete play of $\pII$ the 
real that must be used in checking the winning condition. We will present just
three cases, namely  continuous functions, Baire class
$1$ functions, and Lipschitz functions with constant $2^k$: however, we
will prove that the corresponding reduction game really represents the
desired class of functions just for the last case, as the other two
well-known proofs can be obtained using classical arguments (see e.g.\
\cite[Theorems 3.1 and 5.1]{semmesmultitapegames}). 

\begin{example}\label{exW}
Define the reduction game $G_\W = (X, M_\W, R_\W, \iota_\W)$ by:

- $M_\W  = \{ \p \}$ (the symbol $\p$ will be interpreted as ``pass'');

- $R_\W  = \{  (x,y) \in \Bai \times {}^\omega (\omega \cup \{ \p \})
\mid \forall n 
\exists m \geq n (y(m) \neq \p \})$; 

- $\iota_\W \colon R_\W \to \Bai \colon (x,y) \mapsto \langle y(n) \mid
y(n) \neq \p , n \in \omega \rangle$. 
\end{example}

\begin{proposition}
 A function $f \colon X \to \Bai$ is continuous if and only if $f =
 f_\tau$ for some $\tau \in \sf{LS}_\W$. 
\end{proposition}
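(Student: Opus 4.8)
The plan is to prove the two implications separately, in both cases through the standard correspondence between continuity and monotone maps $\seqo \to \seqo$, taking a little care because the domain $X$ need not be all of $\Bai$.

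For the direction from a legal strategy to continuity, suppose $f = f_\tau$ with $\tau \in \sf{LS}_\W$. The key observation is that when $\pII$ follows $\tau$, her first $n$ moves in a run of $G_\W$ are determined by $\pI$'s first $n$ moves, so for $x \in X$ the finite sequence $\restr{(x*\tau)}{n}$ depends only on $\restr{x}{n}$, and its non-$\p$ entries form an initial segment of $\iota_\W(x,x*\tau) = f(x)$. Fix $x \in X$ and $k \in \omega$. Since $\tau$ is legal we have $(x,x*\tau) \in R_\W$, so $x*\tau$ has infinitely many non-$\p$ coordinates; choose $n$ so that $\restr{(x*\tau)}{n}$ already contains at least $k$ of them. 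Then any $x' \in X$ with $\restr{x'}{n} = \restr{x}{n}$ satisfies $\restr{(x'*\tau)}{n} = \restr{(x*\tau)}{n}$, hence $\restr{f(x')}{k} = \restr{f(x)}{k}$; thus the $f$-preimage of every basic open set is relatively open in $X$, i.e.\ $f$ is continuous.

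For the converse, given a continuous $f \colon X \to \Bai$, I would first encode it by a monotone map $\varphi \colon \seqo \to \seqo$ (that is, $s \subseteq t \imp \varphi(s) \subseteq \varphi(t)$) satisfying $\leng(\varphi(\restr{x}{n})) \to \infty$ and $f(x) = \bigcup_n \varphi(\restr{x}{n})$ for every $x \in X$. Concretely, on the tree $T = \{\restr{x}{n} \mid x \in X, n \in \omega\}$ let $\varphi(s)$ be the longest common initial segment of $\{f(x) \mid x \in X,\ s \subseteq x\}$ truncated to length $\leng(s)$ (the truncation makes this a genuine finite sequence even when only one branch of $X$ passes through $s$), and extend $\varphi$ monotonically to $\seqo \setminus T$ in any fixed way, say constantly above each minimal node outside $T$ --- this last part plays no role in what follows, as it only determines $\pII$'s responses at positions not extended by any member of $X$. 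Continuity of $f$ is exactly what gives the two displayed properties of $\varphi$, and monotonicity is immediate. Given $\varphi$, define $\tau$ by: after $\pI$ has produced a position $s \in \seqo$, if $\pII$ has so far played (ignoring $\p$'s) a proper initial segment $w$ of $\varphi(s)$ then she plays the coordinate $\varphi(s)(\leng(w))$, and otherwise she plays $\p$. Monotonicity of $\varphi$ keeps $\pII$'s non-$\p$ play an initial segment of $\varphi(s)$ at every stage; for $x \in X$ the condition $\leng(\varphi(\restr{x}{n})) \to \infty$ then forces $\pII$ to make infinitely many non-$\p$ moves (so $(x,x*\tau) \in R_\W$, i.e.\ $\tau$ is legal) and her output to equal $\bigcup_n \varphi(\restr{x}{n}) = f(x)$, whence $f_\tau = f$.

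The only delicate point is the bookkeeping imposed by the partial domain --- making $\varphi$ well defined and monotone across the boundary of $T$ --- rather than any genuine obstacle; note also that $\varphi$ is given by an explicit definition, so no choice principle beyond $\ZF$ is used. Otherwise the argument is just the routine translation between continuity and the rules of $G_\W$, and is essentially the classical one (cf.\ \cite[Theorem 3.1]{semmesmultitapegames}).
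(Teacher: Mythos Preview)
Your proof is correct and supplies precisely the classical argument that the paper itself omits, referring the reader to \cite[Theorem 3.1]{semmesmultitapegames} --- the same reference you cite. Since the paper gives no proof of its own here, there is nothing further to compare.
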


\begin{example}\label{exE}
Define the reduction game $G_\sf{E} = (X, M_\sf{E}, R_\sf{E},
\iota_\sf{E})$ by: 

- $M_\sf{E}  = \{ \sf{E} \}$ (the symbol $\sf{E}$ will be interpreted
as ``erase'', and will correspond to the backspace key of a usual
computer keyboard); 

- for $s \in {}^{< \omega}(\omega \cup \{ \sf{E} \})$, inductively
define $\hat{\iota}(s) \in {}^{< \omega} \omega$ by letting
$\hat{\iota}(\emptyset) = \emptyset$, $\hat{\iota}(s^\smallfrown n) =
\hat{\iota}(s)^{\smallfrown}n$ (for $n \in \omega$), and
$\hat{\iota}(s^\smallfrown \sf{E}) = \hat{\iota}(s) \restriction
(\leng(\hat{\iota}(s))-1)$;  

- $R_\sf{E}  = \{ (x,y) \in \Bai \times {}^\omega (\omega \cup \{ \sf{E}
\}) \mid \forall 
n \exists m \forall k \geq m (\leng(\hat{\iota}(y \restriction k))
\geq n \})$; 

- $\iota_\W \colon R_\W \to \Bai \colon (x,y) \mapsto \bigcup_n
\hat{\iota}(y \restriction k_n)$, where $k_n$ is the smallest $m$ such that
$\forall k \geq m (\leng(\hat{\iota}(y \restriction k))
\geq n$. 
\end{example}

\begin{proposition}
 A function $f \colon X \to \Bai$ is of Baire class $1$ if and only if
 $f = f_\tau$ for some $\tau \in \sf{LS}_\sf{E}$.
\end{proposition}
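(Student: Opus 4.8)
The plan is to prove both implications by relating the ``eraser'' dynamics of $G_\sf{E}$ to the standard characterization of Baire class $1$ functions, namely that $f \colon X \to \Bai$ is of Baire class $1$ iff $f$ is the pointwise limit of a sequence of continuous functions, iff $f^{-1}(U) \in \bSigma^0_2(X)$ for every open $U$. For the easy direction, suppose $\tau \in \sf{LS}_\sf{E}$ and let $f = f_\tau$. Given $x \in X$, set $t_k = \hat\iota(\restr{(x * \tau)}{k})$, the sequence on $\pII$'s board after $\pI$ has played $\restr xk$. The key observation is that each coordinate $n$ of the output real $f(x)$ is eventually stabilized: by the rule $R_\sf{E}$, for each $n$ there is $m$ with $\leng(t_k) \geq n$ for all $k \geq m$, and since $t_k$ and $t_{k+1}$ differ by at most one symbol (append or erase), once the length never again drops below $n$ the value $t_k(n-1)$ can change only finitely often. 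I would make this precise to get: for each $n$ there is $k_n = k_n(x)$ (least $m$ as in the rule, which is what $\iota_\sf{E}$ already uses) such that $f(x)(j) = t_k(j)$ for all $k \geq k_n$ and all $j < n$. Then I define, for each $k$, the function $g_k \colon X \to \Bai$ by $g_k(x) = t_k \conc \vec0$ (pad the finite board with zeros); each $g_k$ is continuous because $t_k$ depends only on $\restr xk$ via the continuous operation $x \mapsto \hat\iota(\restr{(x*\tau)}{k})$ (here one uses that $\restr{(x*\tau)}{k}$ depends only on $\restr xk$, as $\pII$'s first $k$ moves are determined by $\pI$'s first $k$ moves via $\tau$). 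Finally $f(x) = \lim_k g_k(x)$ pointwise: fixing $n$, for $k \geq k_n(x)$ we have $\restr{g_k(x)}{n} = \restr{f(x)}{n}$. Hence $f \in \B_1$.

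For the converse, assume $f \colon X \to \Bai$ is of Baire class $1$, so there are continuous $g_k \colon X \to \Bai$ with $f(x) = \lim_k g_k(x)$ for all $x \in X$. I want to build a legal strategy $\tau$ for $\pII$ in $G_\sf{E}$ with $f_\tau = f$. By passing to a subsequence (and using $\ACOR$ to choose moduli, or rather a fixed uniform bookkeeping) I may arrange a ``fast convergence'' normalization, but more robustly I will not assume any modulus and instead let $\pII$ commit to approximations and correct them. Here is the idea for $\tau$: as $\pI$ reveals longer and longer initial segments $\restr xk$ of his real, $\pII$ maintains on her board a finite string which is her current best guess at an initial segment of $f(x)$, and she uses the erase moves to overwrite it when she changes her mind. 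Concretely, at stage $k$ let $s_k = \restr xk$; since each $g_j$ is continuous, $\pII$ can compute, for the finitely many $j \leq k$, longer and longer stable initial segments of $g_j$ on the basic clopen set $N_{s_k}$ as $k$ grows. Using a standard diagonalization over $j$ and over the length of the segment demanded, $\pII$ at each stage picks an index $j(k)$ (nondecreasing, tending to $\infty$) and a target length $\ell(k)$ (nondecreasing, tending to $\infty$) such that $g_{j(k)}$ has a determined initial segment of length $\ell(k)$ on $N_{s_k}$; she then uses erase moves to reduce her board to the common agreement of her previous guess with this new segment and then appends to reach length $\ell(k)$, recording $\restr{g_{j(k)}}{\ell(k)}$ on $N_{s_k}$. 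The legality rule $R_\sf{E}$ will be satisfied because $\ell(k) \to \infty$ and, crucially, for each coordinate $n$ the guessed value eventually stabilizes: once $j(k)$ is large enough that $g_{j(k)}(x)$ agrees with $f(x)$ on the first $n$ coordinates and $s_k$ is long enough that $g_{j(k)}$ is already determined on those coordinates over $N_{s_k}$, $\pII$ stops changing coordinate $n$. This stabilizing value is $f(x)(n)$ because $g_{j(k)}(x) \to f(x)$, so $\iota_\sf{E}(x, x*\tau) = f(x)$, giving $f = f_\tau$ with $\tau$ legal.

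The main obstacle is the converse direction, and within it the precise design of the bookkeeping that lets $\pII$ simultaneously (a) guarantee $\ell(k) \to \infty$ so that the run is legal no matter what $\pI$ does, including the case $x \notin X$ where legality of $\tau$ is vacuous but one still wants $\tau$ total and well-defined, (b) guarantee that coordinate $n$ of the board is erased only finitely often, and (c) guarantee that the eventual value of each coordinate is the correct value $f(x)(n)$. The tension is that $\pII$ has no access to a modulus of convergence and cannot tell when $g_{j(k)}(x)$ has ``caught up'' with $f(x)$ on a given coordinate; the resolution is the usual priority trick already familiar from the informal description of $G_\sf{E}$ in the text --- coordinate $n$ is allowed to be corrected only when forced by a revision at some index $j \leq $ current stage and the revisions at each fixed index $j$ are themselves finite because $g_j$ is a single continuous function whose value on $x$ is approached monotonically in the length of $\restr xk$. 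I would isolate this as a short lemma: if $g \colon X \to \Bai$ is continuous then $x \mapsto$ (the longest initial segment of $g$ determined on $N_{\restr xk}$) changes only finitely often at each coordinate as $k \to \infty$, for each fixed $x$. Granting that, interleaving over $j$ with strictly decreasing priority yields the desired $\tau$, and one checks $R_\sf{E}$ and $f_\tau = f$ by the stabilization argument above. Everything here is carried out in $\ZF + \ACOR$, which suffices since the only choices needed are of one continuous witness sequence $\seq{g_k}{k \in \omega}$ for $f$, a single countable choice.
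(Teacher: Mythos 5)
The paper never actually proves this proposition: it explicitly defers the eraser--game case to ``classical arguments'' in the literature (Semmes's Theorem 5.1), so there is no in-text proof to compare against. Your proposal is exactly the classical argument being alluded to, run through the pointwise-limit characterization of Baire class $1$. The direction from strategies to functions is correct and essentially complete: the padded boards $g_k(x) = \hat{\iota}((x*\tau)\restriction k)\conc \vec{0}$ are locally constant, hence continuous, and converge pointwise to $f_\tau$ by the rule $R_\sf{E}$ (in fact, once the board length never again drops below $n$, the first $n$ entries cannot change at all, not merely finitely often, since erasure only removes the last symbol). You should, however, say explicitly that you are using the Lebesgue--Hausdorff equivalence between $\bSigma^0_2$-measurability --- which is the paper's official definition of $\B_1$ --- and being a pointwise limit of continuous functions, for functions between arbitrary subspaces of $\Bai$; this is nontrivial, is the subject of \cite{mottorosnewcharacterization}, and is where $\ACOR$ really enters.

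In the converse direction the idea is right, but two points are genuinely missing as written. First, $\pII$ plays exactly one symbol of $\omega\cup\{\sf{E}\}$ per turn, so she cannot ``erase to the common prefix and then append to reach length $\ell(k)$'' within a single stage $k$. You need the explicit pacing mechanism: the strategy computes a \emph{target} string $u(\restr{x}{k})$ from $\pI$'s current position and at each turn makes one move toward it (erase if the board is not an initial segment of the current target, otherwise append the target's next symbol), plus the verification that if the targets stabilize coordinatewise to $f(x)$ with unbounded lengths, then the lagging board still satisfies $R_\sf{E}$ and outputs $f(x)$. Second, the bookkeeping for $j(k)$ and $\ell(k)$ is asserted rather than constructed: as stated, demanding $\ell(k)$ nondecreasing conflicts with switching to a new index $j+1$ whose determined segment on $\bN_{\restr{x}{k}}\cap X$ may be very short. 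A concrete scheme that works is to freeze $\ell$ while waiting and to advance to $(j+1,\ell+1)$ at the first $k$ at which $g_{j+1}$ is determined to length $\ell+1$ on $\bN_{\restr{x}{k}}\cap X$; continuity of $g_{j+1}$ at each $x\in X$ makes every wait finite, so $j(k),\ell(k)\to\infty$, and pointwise convergence $g_{j(k)}(x)\to f(x)$ then gives the coordinatewise stabilization of the targets to $f(x)$. With these two repairs your argument goes through in $\ZF+\ACOR$.
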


\begin{example}
Define the reduction game $G_{k\text{-}\Lip} = (X, M_{k\text{-}\Lip},
R_{k\text{-}\Lip}, \iota_{k\text{-}\Lip})$ by\footnote{Notice that
  here we are using the (equivalent) definition of $G_{k\text{-}\Lip}$
  as the game in which $\pII$ pass exactly for the first $k$ turns ---
  see page \pageref{kLip}.}:

- $M_{k\text{-}\Lip} = \{ \p \}$ (the symbol $\p$ will be interpreted
as ``pass'' again); 

- $R_{k\text{-}\Lip} = \{ (x,y) \in {}^\omega (\omega \cup \{ \p \}) \mid
\forall n ({y(n) = \p} \iff {n < k})  \})$; 

- $\iota_{k\text{-}\Lip} \colon R_{k\text{-}\Lip} \to \Bai \colon (x,y)
\mapsto \seq{y(n+k)}{n \in \omega}$.
\end{example}

\begin{proposition}\label{theorLipstrategies}
For every $k \in \omega$,
a function $f \colon X \to \Bai$ is in $ \Lip(2^k)$ if and only if
there is some $\tau \in \sf{LS}_{k\text{-}\Lip}$
such that $f = f_\tau$.
\end{proposition}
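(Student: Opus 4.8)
The plan is to prove the two directions separately, exploiting the alternative description of $G_{k\text{-}\Lip}$ in which $\pII$ passes exactly for the first $k$ turns and then plays natural numbers forever. Fix $k \in \omega$.

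For the direction from strategies to functions, suppose $\tau \in \sf{LS}_{k\text{-}\Lip}$ and let $f = f_\tau$. Since $\tau$ is legal, for every $x \in X$ the complete play $x * \tau$ passes exactly on the first $k$ coordinates and is a natural number thereafter, so $\iota_{k\text{-}\Lip}(x, x * \tau) = f(x)$ is a genuine real. The key observation is that after $\pI$ has enumerated $\restr{x}{n+1}$ (his first $n+1$ moves), $\pII$ — following $\tau$ — has made $n+1$ moves, of which the first $k$ are passes; hence she has already committed the values $f(x)(0), \dots, f(x)(n-k)$ (when $n \geq k$), and these depend only on $\restr{x}{n+1}$. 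Writing this out: if $x, x' \in X$ agree on their first $n+1$ coordinates then $f(x)$ and $f(x')$ agree on their first $n-k+1$ coordinates, i.e. $d(x,x') \leq 2^{-(n+1)} \Rightarrow d(f(x), f(x')) \leq 2^{-(n-k+1)} = 2^k \cdot 2^{-(n+1)}$, so $f$ is Lipschitz with constant $2^k$. One should phrase this with the standard metric $d(x,y) = 2^{-\min\{n : x(n) \neq y(n)\}}$ on $\Bai$ and be slightly careful about the bookkeeping at the boundary (the first $k$ committed values of $f(x)$ require $\pI$ to have played enough, but since $\pII$'s first genuine number is played at turn $k$, right after $\pI$'s move $\restr{x}{k+1}$, everything lines up).

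For the converse, suppose $f \colon X \to \Bai$ is in $\Lip(2^k)$. Then for every $x \in X$ and every $n$, the value $f(x)(n)$ is determined by $\restr{x}{(n+k+1)}$ — more precisely, there is a (partial) function $\varphi_n$ such that $f(x)(n) = \varphi_n(\restr{x}{(n+k+1)})$ for all $x \in X$, because if $x, x' \in X$ agree on $n+k+1$ coordinates then $d(x,x') \leq 2^{-(n+k+1)}$, so $d(f(x),f(x')) \leq 2^k \cdot 2^{-(n+k+1)} = 2^{-(n+1)}$, forcing $f(x)(n) = f(x')(n)$. Now define a strategy $\tau$ for $\pII$: she passes on her first $k$ turns; then, when it is her turn to make her $(k+1+n)$-th move and $\pI$ has just played his $(k+1+n)$-th number, so that $\pI$'s current position is some $s \in {}^{k+1+n}\omega$, $\pII$ plays $\varphi_n(s)$ if $s$ extends to an element of $X$ (otherwise she plays, say, $0$ — this branch is irrelevant since $\pI$ must stay in $X$). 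One checks that $\tau$ is legal (it passes exactly for the first $k$ turns and then always plays a natural number) and that for $x \in X$, $x * \tau$ passes on coordinates $0,\dots,k-1$ and has $(x * \tau)(k+n) = \varphi_n(\restr{x}{(n+k+1)}) = f(x)(n)$, hence $\iota_{k\text{-}\Lip}(x, x*\tau) = f(x)$, i.e. $f = f_\tau$.

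The main (very modest) obstacle is purely bookkeeping: keeping the index shifts consistent, namely that the $k$ initial passes shift $\pII$'s genuine output backward by exactly $k$ turns relative to $\pI$'s enumeration, which is what converts the Lipschitz constant $1$ (a continuous strategy would give) into $2^k$. There is also a minor subtlety about partial functions — $\varphi_n$ need only be defined on finite sequences that are initial segments of some real in $X$, and $\pII$'s moves off this set are arbitrary and harmless since condition (1) of the reduction-game rules eliminates any run in which $\pI$ leaves $X$. No determinacy is needed; this is a direct translation between the metric characterization of $\Lip(2^k)$ and the move-structure of $G_{k\text{-}\Lip}$.
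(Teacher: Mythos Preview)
Your proof is correct and follows essentially the same approach as the paper's: both directions are argued by directly translating between the metric condition $d(f(x),f(x')) \leq 2^k d(x,x')$ and the turn structure of $G_{k\text{-}\Lip}$ in which $\pII$ passes for the first $k$ turns. The paper's converse direction packages your functions $\varphi_n$ into a single sequence $t_s$ (for $s \in {}^{n+k}\omega$, the unique $t_s \in {}^n\omega$ with $\bN_s \cap X \subseteq f^{-1}(\bN_{t_s})$), but this is only a notational difference; your explicit handling of the case where $s$ does not extend to an element of $X$ is in fact slightly more careful than the paper's phrasing.
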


\begin{proof}
Let $\tau \in \sf{LS}_{k\text{-}\Lip}$. Then
$\seq{(s*\tau)(n+k)}{n+k 
  < \leng(s)}$ has length  $\max \{ 0,
\leng(s)-k\}$, which
implies that $d(f_\tau(x),f_\tau(x')) \leq 2^k d(x,x')$ for every $x,x' \in X$.
For the other direction, let $f \colon X \to \Bai$ be in
$\Lip(2^k)$. Given $s \in  {}^{n+k} \omega$, let $t_s \in
{}^n \omega$ be the unique sequence such that $\bN_s \cap X \subseteq
f^{-1}(\bN_{t_s})$ (such a sequence must exists since $f \in
\Lip(2^k)$), and define the strategy $\tau$ for $\pII$ in
$G_{k\text{-}\Lip}$ by  $\tau(s) = \p$ if $\leng(s) < k$ and 
$\tau(s)  = t_s(\leng(t_s) - 1)$ otherwise. 
It is clear that $\tau$ is legal and such that $f = f_\tau$.
\end{proof}

\section{Constructing new reduction 
games} \label{sectionplayable}

Let us start with a technical definition.

\begin{defin}\label{defadequate}
 Let $G_* = (X,M_*,R_*,\iota_*)$ be a reduction game
 and $\F_*$ be the set of functions 
induced by the strategies in $\sf{LS}_*$. We say that $G_*$
(or $\F_*$) is \emph{$\p$-closed}   
if $\F_*$ coincides with the set of functions induced by the legal strategies
for $\pII$ in the new reduction 
game $G^\p_*$ defined by: 

- $M^\p_* = M_* \cup \{ \p \}$, where $\p$ is a new symbol not in
$\omega \cup M_*$; 

- $R^\p_* = \{ (x,y) \in \Bai \times {}^\omega (\omega \cup M^\p_*)
\mid {\forall n 
  \exists m \geq n (x(m) \neq \p)} \wedge  
{(x,\langle y(n) \mid y(n) \neq \p, n \in \omega \rangle) \in R_*} \}$; 

- $\iota^\p_* \colon R^\p_* \to \Bai \colon (x,y) \mapsto
\iota_*(x,\langle y(n) 
\mid y(n) \neq \p, n \in \omega \rangle)$. 

A set of functions $\F$ is \emph{adequate} if it contains the identity
function and there is a parametrized class $\mathcal{G}_*$ of $\p$-closed 
reduction games which represents $\F$. 
\end{defin}

Roughly speaking, the condition for a set $\F_*$ of being $\p$-closed
is the natural counterpart in terms of strategies of the property of
being closed under right-composition with continuous functions from
$X$ into itself, while a set of functions is adequate if it is not too
small.  
The previous definition could seem a little bit obscure, but it covers
almost all the important cases and is designed in such a way that the
arguments presented in the next subsections can be carried out in a very
general way\footnote{Notice that, as shown in Section
  \ref{sectionsmallness}, adequateness is not the optimal condition
  for our purpose: nevertheless, it allows to give an easier
  presentation on the subsequent constructions avoiding some
  technical and notational complications, and thus seems to be a good
  compromise between generality of the arguments and clearness of
  exposition.}.   
For instance, it is obvious that $G_\W$ is $\p$-closed,  but let us
check as a nontrivial example that $G_{\sf{E}}$ (and hence $\B_1$) is 
$\p$-closed as well. Consider the game $G^\p_{\sf{E}}$: clearly
$\sf{LS}_{\sf{E}} \subseteq \sf{LS}^\p_{\sf{E}}$ 
and, conversely, every $\tau \in \sf{LS}^\p_{\sf{E}}$ can be
converted in a legal strategy for $\pII$ in $G_{\sf{E}}$ by
substituting every use of the symbol $\p$ with the pair of moves
``play $0$ and then play $\sf{E}$''. However, one has also to notice that
not all the sets of functions 
considered in this paper are $\p$-closed --- for a counterexample just
take $\Lip(2^k)$.

In the next subsections we will show how to construct games for the classes of
functions $\sf{D}^{\vec{\F}}_\xi$, $\tilde{\sf{D}}^{\vec{\F}}_\xi$ and
$\lim \vec{\F}$, provided that $\xi$ is some fixed countable nonzero ordinal
and $\vec{\F}$ is a countable sequence of not too small (i.e.\ adequate)
playable sets of functions (albeit for simplicity of presentation in
Subsections \ref{sectionpiecewise} and \ref{sectiontilde} we
will just deal  with the simpler case in which $\vec{\F}$ is
constantly equal to some fixed adequate $\F$: this covers all the most
important cases that one encounters in practice, and the
general case can 
easily be recovered from these particular examples). Taking $\F = \W$
in Theorems \ref{theorgames} and \ref{theorgames2} we get, in
particular, a generalization of the 
games $G_\bt$ and $G_\sf{M}$ for (simultaneously) \emph{all} higher
level (the existence of such games was still an open
problem). Moreover, since it is easy to see that the constructions
below always produce reduction games which are $\p$-closed, by iterating
those 
 constructions one can get a wide class of reduction games, namely
 games representing $\sf{D}^{\mathcal{B}_\mu}_\xi$ and 
 $\tilde{\sf{D}}^{\mathcal{B}_\mu}_\xi$ for every $\xi,\mu<\omega_1$.

\subsection{Games for $\sf{D}^\F_\xi$}\label{sectionpiecewise}

Fix any increasing sequence of ordinals $\langle \mu_n
\mid  n \in \omega \rangle$ cofinal in $\xi$ and, for each $n \in
\omega$, a set $P_n$ which is $\bPi^0_{\mu_n}$-complete (we will see
in Claim \ref{claimchangecontrolsets}
that the choice of the
$\mu_n$'s and of the $P_n$'s is not essential).  Let $\F$ be an  adequate set
of functions,  and let  $G_* = (X , M_*, R_*, \iota_*)$ be a
$\p$-closed reduction game representing the subset of $\F$ consisting
of those function which have domain $X$. Let
$G_\W  = (\Bai, M_\W, R_\W, \iota_\W)$ be the game defined in
Example \ref{exW} representing
the set of (totally defined) continuous functions $\W$.   Now define
$G^\F_\xi = (X, M^\F_\xi, R^\F_\xi,
\iota^\F_\xi)$ as follows: 

- $M^\F_\xi = M_* \cup M_\W$;

- $R^\F_\xi = \{ (x,y) \in \Bai \times {}^\omega (\omega \cup
M^\F_\xi) \mid {\forall i ((x,{\pi_{2i}(y)) \in R_\W} \wedge
  {(x,\pi_{2i+1}(y)) \in R_*})} \wedge {\exists i (\iota_\W(x,\pi_{2i}(y))
  \in P_i)} \}$; 

- $\iota^\F_\xi \colon R^\F_\xi \to \Bai \colon (x,y)
\mapsto \iota_*(x,\pi_{2i+1}(y))$, where $i$ is smallest such that
$\iota_W(x,\pi_{2i}(y)) \in P_i$.

The
game $G^\F_\xi$ can be visualized as a two-player game in which
$\pI$ has to fill in a table with a single row, while $\pII$ has to
fill in a \emph{table with $\omega$-many rows}.

\begin{small}
\begin{center}
\begin{tabular}{@{}cc|cccccccccccc@{}}
&&&&&&&&&&&&&\\
\pI& $A$& $x_0$
&$x_1$&$x_2$&$\dotsc$&$\dotsc$&$x_k$&$x_{k+1}$
&$\dotsc$&&&&$\rightarrow x$\\
&&&&&&&&&&&&&\\
\hline
&&&&&&&&&&&&&\\
&$P_0$ &$c^0_0$&$c^0_1$ &$c^0_2$&
$\dotsc$&$\dotsc$&$c^0_k$&$\dotsc$&&&&&$\rightarrow c^0$\\
&&&&&&&&&&&&&\\
&
B&$y^0_0$&$y^0_1$&$y^0_2$&$\dotsc$&$\dotsc$&$y^0_k$&$\dotsc$&&&
&&$\rightarrow y^0$\\
&&&&&&&&&&&&&\\
&$P_1$&$c^1_0$&$c^1_1$&$c^1_2$
&$\dotsc$&$\dotsc$&$c^1_k$&$\dotsc$&&&&&$\rightarrow c^1$\\
&&&&&&&&&&&&&\\
\pII&$B$
&$y^1_0$&$y^1_1$&$y^1_2$&$\dotsc$&$\dotsc$&$y^1_k$&$\dotsc$&&&
&&$\rightarrow y^1$\\
&&&&&&&&&&&&&\\
& &$\vdots$&&&&&&&&&&&\\
&&&&&&&&&&&&&\\
 &$P_n$
 &$c^n_0$&$c^n_1$&$c^n_2$&$\dotsc$&$\dotsc$ &$c^n_k$&
 $\dotsc$&& &&& $\rightarrow c^n$\\
&&&&&&&&&&&&&\\
& $B$
&$y^n_0$&$y^n_1$&$y^n_2$&$\dotsc$&$\dotsc$&$y^n_k$&$\dotsc$&&&&&
$\rightarrow y^n$\\
&&&&&&&&&&&&&\\
&&$\vdots$&&&&&&&&&&&
\end{tabular}
\end{center}
\end{small}

At the $k$-th turn, $\pI$ plays a natural number on his (unique) row,
while player $\pII$ has two options (in what follows
 $n$ is the unique natural number such that $\langle n,m \rangle = k$
 for some/any $m \in \omega$): pass or play a natural number on
the $n$-th row if $n=2i$ is even (but at the
end of the run she must have played infinitely many natural numbers on
such row,
i.e.\ she must have produced a real $c^i$ on it),  or else play
an element of $\omega \cup M_*$  on 
 her $n$-th rows if $n$ is odd.  
The even rows are \emph{control} rows which can \emph{activate} the rows
immediately below them (the odd ones), and this happens exactly when
the real $c^i$
played on the $2i$-th row  belongs to the \emph{control set} $P_i$. In
every run of the game, $\pII$ has to activate \emph{at least} one of
the odd rows, and she has to make sure that the sequence $y^i$ she has
played on the $2i+1$-st row belongs to the set of rules
$R_*$ for every $i \in \omega$ (i.e.\ she must ``respect the rules'' of 
$G_*$ on the odd rows). Finally, the output 
real played by $\pII$ is exactly $\iota_*(x,y^i)$,
where $i$ is least such that the $2i+1$-st row is
activated.\\

Every strategy $\tau$ for $\pII$ in $G^\F_\xi$ can be seen as a
sequence $\langle \tau_n\mid n \in \omega \rangle$ of legal strategies for
$\pII$ in the game $G_\W= (\Bai,M_\W,R_\W,\iota_\W)$ or in $G_*=
(X,M_*,R_*,\iota_*)$ (depending on whether $n$ 
is  even or odd),
each of which is used on the  
corresponding row. In fact, if $\langle \tau_n\mid n \in\omega
\rangle$ is such a  sequence we can define
the strategy $\tau = \bigotimes_n \tau_n$ for $\pII$ in $G^\F_\xi$
as follows: 
for each $\emptyset \neq s \in \seqo$, let $n,m \in \omega$ be such
that $\leng(s) = \langle n,m \rangle +1$, and define $\tau(s) =
\tau_n(s \restriction (m+1))$. 
 It is not hard to check that $\tau = \bigotimes_n \tau_n$ is  a (non
 necessarily legal) 
strategy for $\pII$ in $G^\F_\xi$ such that $\pi_n(x*\tau) = x *
\tau_n$ for every $x \in X$. Thus to define a legal strategy for
$\pII$ in $G^\F_\xi$ it is enough to give 
a sequence $\langle \tau_n \mid n \in \omega \rangle$ of
legal strategies for
$\pII$ in  $G_\W$ (resp.\ $G_*$) if $n$ is even (resp.\
odd),
and check that for every $x \in X$ there is 
some $i \in \omega$ such that $\iota_\W(x,x * \tau_{2i}) \in P_i$.

Conversely, given a legal strategy $\tau$ for $\pII$ in
$G^\F_\xi$ and a 
natural number $n \in \omega$, we can define the legal strategy
$\pi_n(\tau)$ for $\pII$ in, respectively, $G^\p_\W$ if $n$ is even or in
$G^\p_*$ if $n$ is odd (where $G^\p_\W$ and $G^\p_*$ are defined as in
Definition \ref{defadequate}) as follows: for each $\emptyset \neq s
\in \seqo$, define $\pi_n(\tau) (s) = \p$ if there is no $m \in
\omega$ such that $\leng(s) = \langle n,m \rangle +1$, and $\pi_n(\tau)(s) =
\tau(s)$ otherwise. Since both  $G_\W$ and $G_*$
are $\p$-closed, with a little abuse of
notation we will confuse each $\pi_{2i}(\tau)$ (respectively,
$\pi_{2i+1}(\tau)$) with any legal strategy in $G_\W$ (resp.\ $G_*$)
which induces the same function $f_{\pi_{2i}(\tau)}$ (resp.\
$f_{\pi_{2i+1}(\tau)}$) on $X$. 
It is not hard to check that  the operations
$\pi_n$ on strategies ``commute'' with $\bigotimes_n$:  given a
strategy $\tau$ for 
$\pII$  in $G^\F_\xi$,
for every $x \in X$ and $i \in \omega$ we have that $\iota_\W(x,\pi_{2i}(x *
\tau)) = \iota_\W(x,\pi_{2i}(x * \bigotimes_n \pi_n(\tau)))$ and
$\iota_*(x,\pi_{2i+1}(x * 
\tau)) = \iota_*(x,\pi_{2i+1}(x * \bigotimes_n \pi_n(\tau)))$.
In particular,
 $\tau$ is a \emph{legal} strategy
for $\pII$ in $G^\F_\xi$ if and only if $\bigotimes_n \pi_n(\tau)$
is, and in  
the positive case the two strategies induce the same function on $X$ (i.e.\
$f_\tau = f_{\bigotimes_n \pi_n(\tau)}$). 
\\

We will now prove that, as already announced,  the choice of the
ordinals $\mu_n$'s and of the sets $P_n$'s is not essential. Let
$\langle \hat{\mu}_n\mid n \in \omega \rangle$ be a (non necessarily
increasing)
 sequence of
ordinals cofinal in  $\xi$, and for every $n \in \omega$ let
$\hat{P}_n$  be
$\bPi^0_{\hat{\mu}_n}$-complete. Let $\hat{G}^\F_\xi$  be the
game defined as 
at the beginning of this section but using the $\hat{P}_n$'s instead of the
$P_n$'s. 

\begin{claim}\label{claimchangecontrolsets}
For every legal strategy $\hat{\tau}$ for
$\pII$ in 
$\hat{G}^\F_\xi$ there is a legal strategy $\tau$ for $\pII$ in
$G^\F_\xi$ such  
that $f_{\hat{\tau}}= f_\tau$, and, conversely, for every legal
strategy $\rho$ for $\pII$ in $G^\F_\xi$ there is a legal strategy
$\hat{\rho}$ for $\pII$ in $\hat{G}^\F_\xi$ such that $f_\rho =
f_{\hat{\rho}}$. Therefore, $G^\F_\xi$ and $\hat{G}^\F_\xi$ represent
the same set of functions. 
\end{claim}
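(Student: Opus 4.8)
For every legal strategy $\hat\tau$ for $\pII$ in $\hat G^\F_\xi$ there is a legal $\tau$ for $\pII$ in $G^\F_\xi$ with $f_{\hat\tau} = f_\tau$, and conversely; hence $G^\F_\xi$ and $\hat G^\F_\xi$ represent the same set of functions.

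**Setup.** The two games differ only in which control sets sit on the even rows: in $G^\F_\xi$ the $2i$-th row is activated when its output real lies in $P_i$ (a $\bPi^0_{\mu_i}$-complete set), and in $\hat G^\F_\xi$ it is activated when it lies in $\hat P_i$ (a $\bPi^0_{\hat\mu_i}$-complete set). The key facts I would use are: (a) by $\bPi^0_\mu$-completeness, for any two $\bPi^0$ sets of comparable level there is a continuous reduction from one to the other; (b) the decomposition $\tau = \bigotimes_n \pi_n(\tau)$ established just above the claim, which lets me manipulate a strategy in $G^\F_\xi$ row-by-row, reading the even rows as legal strategies in $G_\W^\p$ and the odd rows as legal strategies in $G_*^\p$; and (c) that $G_\W$ is $\p$-closed, so the function computed by an even row can equally well be realized by a genuine $G_\W$-strategy.

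**Main argument.** Fix a legal $\hat\tau$ for $\pII$ in $\hat G^\F_\xi$ and write $\hat\tau = \bigotimes_n \pi_n(\hat\tau)$. I keep all the odd rows $\pi_{2i+1}(\hat\tau)$ unchanged. For the even rows I need to build new control-row strategies that switch the activation pattern from the $\hat P_i$'s to the $P_i$'s while preserving, for every $x\in X$, the \emph{least} index $i$ at which activation occurs (this is what guarantees $f_{\hat\tau} = f_\tau$, since the output is $\iota_*(x,\pi_{2i+1}(\cdot))$ for that least $i$). The obstacle is that the two sequences of ordinals $\langle\hat\mu_i\rangle$ and $\langle\mu_i\rangle$ need not match level-by-level, so I cannot simply reduce $\hat P_i$ to $P_i$ on row $i$. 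Instead I argue as follows. Since both sequences are cofinal in $\xi$, I can choose, for each $i$, an index $j(i)$ with $\hat\mu_{j(i)} \ge \mu_i$, and arrange $j$ to be injective by passing to a subsequence if necessary; then by $\bPi^0$-completeness there is a continuous $g_i \colon \Bai \to \Bai$ with $g_i^{-1}(\hat P_{j(i)}) = P_i$, equivalently $z \in P_i \iff g_i(z) \in \hat P_{j(i)}$. Conversely, because $\langle\mu_i\rangle$ is also cofinal, I can likewise pick for each $i$ an index $l(i)$ with $\mu_{l(i)} \ge \hat\mu_i$ and a continuous $h_i$ with $h_i^{-1}(P_{l(i)}) = \hat P_i$. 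Wait — this one-sided bookkeeping does not by itself control the \emph{minimum}, so I refine the plan.

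**Controlling the minimum.** To preserve the least activated index I make every even row of the new game compute essentially the same ``profile'' of memberships. Concretely, given $\hat\tau$, I define $\tau$ on even rows so that row $2i$'s output real lands in $P_i$ if and only if, \emph{for the same $x$}, there exists $j$ with $\hat\mu_j \le \mu_i$ such that $\hat\tau$ activated $\hat P_j$; one implements this by having row $2i$ of $\tau$ internally simulate all the (finitely or countably many, but uniformly continuously computable) control rows of $\hat\tau$ of level $\le \mu_i$ and output a real forced into $P_i$ exactly when one of them signals membership — this is legitimate because each simulated row is a continuous function of $x$, $\pi_{2i}$ is continuous and open, $G_\W$ is $\p$-closed (so the required $G_\W$-strategy exists), and $P_i$ being $\bPi^0_{\mu_i}$-complete absorbs any $\bPi^0_{\le\mu_i}$ condition by a continuous reduction. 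One checks that with this definition: $\pII$ respects $R^\F_\xi$ (all odd rows still lie in $R_*$, all even rows in $R_\W$), at least one even row is activated (since $\hat\tau$ activated some $\hat P_j$ and $\hat\mu_j \le \mu_i$ for cofinally many $i$), and the least activated even index $i_0$ in the $\tau$-run equals the least $i$ such that some $\hat P_j$ with $\hat\mu_j \le \mu_i$ was activated — and for the odd row below it, since I left odd rows untouched and the decomposition $\bigotimes$ commutes with $\pi_n$, the value $\iota_*(x, \pi_{2i_0+1}(x*\tau))$ coincides with the value $\hat\tau$ would output. Hence $f_\tau = f_{\hat\tau}$. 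The reverse direction, from $\rho$ in $G^\F_\xi$ to $\hat\rho$ in $\hat G^\F_\xi$, is entirely symmetric, swapping the roles of the $P_i$'s and $\hat P_i$'s and of the two ordinal sequences. Having both directions, every function realized in one game is realized in the other, so $\F_{G^\F_\xi} = \F_{\hat G^\F_\xi}$, proving the claim.

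**Expected main obstacle.** The genuinely delicate point is not the continuous-reduction bookkeeping but making sure that \emph{which row is the first activated} is preserved, since only that row's odd companion feeds the output; the level mismatch between $\langle\mu_i\rangle$ and $\langle\hat\mu_i\rangle$ forces the simulating-inside-one-row trick rather than a naïve row-by-row translation, and the verification that this simulation is still a \emph{legal} strategy (all rules respected, exactly the intended activation profile, genuine $G_\W$-strategies available by $\p$-closedness) is where the care is needed.
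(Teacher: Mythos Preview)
Your refined approach has two genuine gaps. First, the condition you want row $2i$ of $\tau$ to detect --- ``there exists $j$ with $\hat\mu_j \le \mu_i$ such that $\hat\tau$ activated $\hat P_j$'' --- is, as a set of $x$, a (possibly infinite) union of $\bPi^0_{\le\mu_i}$ sets, hence a $\bSigma^0_{\mu_i+1}$ set in general and not a $\bPi^0_{\mu_i}$ set; so it cannot be continuously reduced to the $\bPi^0_{\mu_i}$-complete set $P_i$ as you claim. Second, and more fundamentally, even granting that encoding, leaving the odd rows unchanged destroys the output: your least activated index $i_0$ in $\tau$ need not equal the least activated index $j_0$ in $\hat\tau$. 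For instance, if $\mu_i=i+1$, $\hat\mu_0=10$, and only row $0$ of $\hat\tau$ activates, then $j_0=0$ but your $i_0=9$; since the odd rows are untouched, $f_\tau(x)=\iota_*(x,\pi_{2i_0+1}(x*\hat\tau))$ is computed by the strategy $\pi_{19}(\hat\tau)$, not $\pi_1(\hat\tau)$, and there is no reason these agree. So $f_\tau\neq f_{\hat\tau}$ in general.

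The paper avoids both problems with a much simpler idea: choose a strictly increasing map $k\mapsto n_k$ with $\hat\mu_k\le\mu_{n_k}$ (possible since $\langle\mu_n\rangle$ is increasing and cofinal in $\xi$), fix continuous reductions $\sigma_k$ witnessing $\hat P_k\le_\W P_{n_k}$, and \emph{relocate each pair of rows together}: set $\tau_{2n_k}=\sigma_k\star\pi_{2k}(\hat\tau)$ and $\tau_{2n_k+1}=\pi_{2k+1}(\hat\tau)$, while for $n$ not of the form $n_k$ put on row $2n$ a constant outside $P_n$ and on row $2n+1$ the identity. Because $k\mapsto n_k$ is order-preserving and the only activated rows of $\tau$ are the $n_k$'s, the least activated index in $\tau$ is $n_{j_0}$, and its odd companion is precisely $\pi_{2j_0+1}(\hat\tau)$, so $f_\tau=f_{\hat\tau}$. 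Your initial plan (before the ``Wait'') was actually closer in spirit to this, although you had the direction of the reductions reversed; the key point you missed is that the odd rows must move with their control rows.
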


\begin{proof}
Since the $\mu_n$'s are cofinal in
$\xi$ and the $P_n$'s 
are $\bPi^0_{\mu_n}$-complete, for every $k \in \omega$ there is some
$n_k$ such that $\hat{P}_k \leq_\W P_{n_k}$, thus
there is a winning strategy $\sigma_k$ for $\pII$ in
$G_\W(\hat{P}_k,P_{n_k})$.  Moreover, we can fix some $y_n \notin P_n$ for
every $n$, and for every $y \in \Bai$ let $\rho_y \in \sf{LS}_\W$
 and $\rho_\id \in \sf{LS}_*$ be 
 such that $f_{\rho_y}$ is constantly equal to $y$ and $f_{\rho_\id} =
 \id$
 is the identity function. Finally, given $\tau_0 ,
 \tau_1 \in \sf{LS}_\W$  let $\tau_1 \star \tau_0 \in \sf{LS}_\W$ be
 any strategy such that $f_{\tau_1 \star \tau_0} = f_{\tau_1} \circ
 f_{\tau_0}$. 
 Now define
 $\tau_{2n} =  \sigma_k \star \pi_{2k}(\hat{\tau})$ and $\tau_{2n+1} =
 \pi_{2k+1}(\hat{\tau})$ 
 if $n = n_k$ for some $k \in \omega$, and
 $\tau_{2n} = \rho_{y_n}$ and $\tau_{2n+1} = \rho_\id$
 otherwise. Finally put $\tau= \bigotimes_n \tau_n$. 

Given  $x \in \Bai$, it is not hard to
check that $ \iota_\W(x,\pi_{2k}(x * \hat{\tau})) \in \hat{P}_k \iff
\iota_\W(x,\pi_{2n_k}(x 
* \tau)) \in P_{n_k}$
and $\iota_*(x,\pi_{2k+1}(x * \hat{\tau})) =
\iota_*(x,\pi_{2n_k+1}(x*\tau))$,  while $\iota_\W(x,\pi_{2n}(x
* \tau))= y_n \notin P_n$ for every $n$ which is not of the form $n_k$
for some $k \in \omega$. Hence $\tau$ is a legal strategy for $\pII$ in
$G^\F_\xi$ if and only if $\hat{\tau}$ were a legal strategy
for $\pII$ in 
$\hat{G}^\F_\xi$, and moreover for every $x \in \Bai$ we have
that $\iota_*(x,\pi_{2k+1}(x * 
\hat{\tau})) = \iota_*(x,\pi_{2n_k+1}(x * \tau))$, where $k$ is least such that
$\iota_\W(x,\pi_{2k}(x*\hat{\tau})) \in \hat{P}_k$ (which implies that
$n_k$ is the least $m 
 \in \omega$ such
that $\iota_\W(x,\pi_{2m}(x*\tau)) \in P_m$).

Using the same argument, one can convert any $\rho \in
\sf{LS}^\F_\xi$ into a $\hat{\rho} \in \hat{\sf{LS}}^\F_\xi$ such that
$f_{\hat{\rho}} = f_\rho$, hence we are done. 
\end{proof}

Thus from this point onward we can take the option of changing the
sets $P_n$'s in the definition of $G^\F_\xi$ at our pleasure, provided
that
the new ones are
$\bPi^0_{\hat{\mu}_n}$-complete for some (non necessarily increasing)
sequence of ordinals $\seq{\hat{\mu}_n}{n \in \omega}$ cofinal in $\xi$.

\begin{theorem}\label{theorgames}
For every $X,A,B \subseteq \Bai$ and every $f\colon X \to \Bai$ we
have that:
\begin{enumerate}[i)]
\item $f \in \sf{D}^\F_\xi$ if and only if there is some $\tau \in
  \sf{LS}^\F_\xi$ such that   $f=f_\tau$;
\item if $\pI$ has a winning strategy in $G^\F_\xi(A,B)$, then $\pI$
  has also a winning strategy in $G_\L(A,B)$. In particular, in this
  case, $B \leq_\sf{c} \neg A$, i.e.\ there is 
  a \emph{contraction} (that is a Lipschitz function with constant $< 1$) $g$ 
  such that $x \in B \iff g(x) \notin A$ for every $x \in \Bai$, and
  ${\rm range}(g) \subseteq X$. 
\end{enumerate}
\end{theorem}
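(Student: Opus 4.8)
The plan is to prove the two parts of Theorem~\ref{theorgames} by unpacking the correspondence between legal strategies in $G^\F_\xi$ and the sequences of strategies $\langle \tau_n \mid n \in \omega\rangle$ described right before Claim~\ref{claimchangecontrolsets}, together with the characterization of $\sf{D}^\F_\xi$ via $\bPi^0_{<\xi}$-partitions.

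\medskip

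\emph{Part i), direction ($\Leftarrow$).} Let $\tau \in \sf{LS}^\F_\xi$, and write $\tau = \bigotimes_n \pi_n(\tau)$ (legality is preserved, as noted in the excerpt). For each $i$, $\pi_{2i}(\tau)$ is a legal strategy in $G_\W$, inducing a continuous $g_i \colon X \to \Bai$, and $\pi_{2i+1}(\tau)$ is a legal strategy in $G_*$, inducing some $f_i \in \F$ with domain $X$. Set $C_i = g_i^{-1}(P_i)$, which is $\bPi^0_{\mu_i}(X)$ since $g_i$ is continuous and $P_i$ is $\bPi^0_{\mu_i}$; in particular $C_i \in \bPi^0_{<\xi}(X)$. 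The rule $R^\F_\xi$ guarantees $\bigcup_i C_i = X$, so setting $D_i = C_i \setminus \bigcup_{j<i} C_j$ gives a partition of $X$ into $\bDelta^0_\xi(X)$ sets (a refinement of a $\bPi^0_{<\xi}$-partition). By definition of $\iota^\F_\xi$, for $x \in D_i$ we have $f_\tau(x) = \iota_*(x, \pi_{2i+1}(x * \tau)) = f_i(x)$, so $\restr{f_\tau}{D_i} = \restr{f_i}{D_i}$ with $f_i \in \F$ defined on all of $X$. Hence $f_\tau \in \sf{D}^\F_\xi$.

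\medskip

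\emph{Part i), direction ($\Rightarrow$).} Suppose $f \in \sf{D}^\F_\xi$, witnessed by a partition $\langle D_i \mid i \in \omega\rangle$ of $X$ and functions $f_i \in \F$ with domain $X$ such that $\restr{f}{D_i} = \restr{f_i}{D_i}$. Using that we may freely change the control sets (the remark following Claim~\ref{claimchangecontrolsets}), it is enough to exhibit, for each $i$, a set $\bPi^0_{\mu_i}$-complete set $\hat P_i$ and a continuous $g_i \colon X \to \Bai$ with $g_i^{-1}(\hat P_i) = D_i$: since each $D_i$ lies in some level $\bDelta^0_{\nu}(X) \subseteq \bPi^0_{\nu+1}(X)$ with $\nu + 1 < \xi$, and these levels are cofinal in $\xi$, we can reindex so that $D_i$ is $\bPi^0_{\hat\mu_i}(X)$ for a sequence $\hat\mu_i$ cofinal in $\xi$; then completeness of $\hat P_i$ gives a continuous reduction $g_i$ of $D_i$ to $\hat P_i$. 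Take $\pi_{2i}(\tau)$ to be a legal strategy in $G_\W$ inducing $g_i$ (these exist, $G_\W$ represents $\W$), and $\pi_{2i+1}(\tau)$ a legal strategy in $G_*$ inducing $f_i$ (these exist, $G_*$ represents the relevant subset of $\F$). Set $\tau = \bigotimes_n \pi_n(\tau)$. Then for $x \in X$, $\iota_\W(x, \pi_{2i}(x*\tau)) = g_i(x) \in \hat P_i \iff x \in D_i$, and since the $D_i$ partition $X$, exactly one row is activated; for $x \in D_i$ (with $i$ least, which here means $i$ unique) the output is $\iota_*(x, \pi_{2i+1}(x*\tau)) = f_i(x) = f(x)$. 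So $\tau$ is legal and $f_\tau = f$.

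\medskip

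\emph{Part ii).} Suppose $\sigma$ is a winning strategy for $\pI$ in $G^\F_\xi(A,B)$. The point is that $\pI$'s moves in $G^\F_\xi$ are just natural numbers fed in one per turn, exactly as in $G_\L$, and $\pII$'s complete play $y$ in $G^\F_\xi$ can be recovered from a $G_\L$-run by a continuous ``unpacking'': given a $G_\L$-play of $\pII$, read off $y = \bigotimes_n y_n$. The subtlety is that not every such $y$ lies in $R^\F_\xi$ — but if $\sigma$ is \emph{legal} in $G^\F_\xi(A,B)$, then $\sigma$ already defeats $\pII$ whenever the resulting $y \notin R^\F_\xi$ (by rule~(2), $\pII$ loses), so it suffices to note that $\pI$ wins against all $\pII$-plays, legal or not; the induced $G_\L$-strategy $\sigma'$ — which ignores $\pII$'s actual $G_\L$-moves, re-packs them into a candidate $y$, and plays as $\sigma$ does — is therefore winning in $G_\L(A,B)$, because whenever $\pII$'s $G_\L$-output real equals $\iota^\F_\xi(x,y)$ and $(x,y) \in R^\F_\xi$ we are in the winning-condition case of $G^\F_\xi(A,B)$, and otherwise $\pII$ has failed to produce a legal play and loses anyway. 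Here one must be slightly careful that the output real of $\pII$ in the simulated $G_\L$ is exactly $\iota^\F_\xi$ applied to the packed play; this is arranged by letting $\pII$'s $G_\L$-moves directly enumerate what will be the $y^i$-rows and control rows, interleaved via $\bigotimes$. Finally, a winning strategy for $\pI$ in $G_\L(A,B)$ means $A \not\leq_\L B$, and by the standard argument (see~\cite{wadgethesis, andrettaslo}) the strategy $\sigma'$, read as a function of $\pII$'s moves, is itself a contraction $g$ with $\mathrm{range}(g) \subseteq X$ (legality of $\sigma$) witnessing $x \in B \iff g(x) \notin A$, i.e.\ $B \leq_\sf{c} \neg A$.

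\medskip

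The main obstacle I expect is Part~ii): precisely matching up the output real of the simulated $G_\L$-run with $\iota^\F_\xi$ of the packed play, and being careful about runs in which $\pII$'s packed play falls outside $R^\F_\xi$ — one has to argue that legality of $\sigma$ in $G^\F_\xi(A,B)$ (which is built into the notion of a winning strategy) makes those runs automatic losses for $\pII$, so the simulation genuinely produces a winning $G_\L$-strategy. The rest is bookkeeping with $\bigotimes$, $\pi_n$, and the cofinality/completeness facts about the control sets, all of which is routine given the material already developed before Claim~\ref{claimchangecontrolsets}.
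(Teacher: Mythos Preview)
Your Part~\textit{i)} is correct and close in spirit to the paper's argument. The only difference is in the $(\Rightarrow)$ direction: you invoke Claim~\ref{claimchangecontrolsets} to replace the control sets by $\hat P_i$'s tailored so that each $D_i$ reduces continuously to $\hat P_i$, whereas the paper keeps the original $P_n$'s, chooses for each $k$ an index $n_k$ with $D_k \leq_\W P_{n_k}$, and fills the unused rows with dummy strategies ($\rho_{y_n}$ with $y_n \notin P_n$ on even rows, $\rho_{\id}$ on odd rows). Both routes work.

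Part~\textit{ii)}, however, has a genuine gap. Your simulation runs in the wrong direction. You propose to take $\pII$'s $G_\L$-play $w$, ``repack'' it as some $y \in {}^\omega(\omega \cup M^\F_\xi)$, feed $y$ to $\pI$'s winning strategy $\rho$ in $G^\F_\xi$, and copy $\pI$'s responses into $G_\L$. But the winning condition in $G_\L(A,B)$ asks whether $w \in B$, while the one in $G^\F_\xi(A,B)$ asks whether $\iota^\F_\xi(x,y) \in B$, and your packing gives no control over $\iota^\F_\xi(x,y)$: it equals $\iota_*(x,\pi_{2i+1}(y))$ for the \emph{least} $i$ with $\iota_\W(x,\pi_{2i}(y)) \in P_i$, and nothing in ``let $w$ directly enumerate the rows via $\bigotimes$'' forces this to be $w$. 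Your fallback --- ``otherwise $\pII$ has failed to produce a legal play and loses anyway'' --- conflates losing in the simulated $G^\F_\xi$ with losing in $G_\L$: if the packed $y$ happens to be illegal in $G^\F_\xi$, $\pII$ loses \emph{that} game, but this says nothing about whether $x \in A \iff w \in B$, which is what you need in $G_\L$.

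The paper's idea is the reverse: instead of packing an arbitrary $w$ into an uncontrolled $y$, one manufactures a \emph{specific} legal $\pII$-play in $G^\F_\xi$ whose output real is forced to be $w$. Fix $z \in P_0$; on every even row have $\pII$ follow $\rho_z$ (so row~$1$ is always the first activated row), and on every odd row have $\pII$ copy the moves of $\rho_{\id}$ against $w$ (here $\rho_{\id} \in \sf{LS}_*$ exists because $\F$ is adequate, i.e.\ contains the identity). Then the output real is $\id(w) = w$, the constructed play is legal, and since $\rho$ defeats it we get exactly $x \in A \iff w \notin B$. The two ingredients your argument is missing are precisely the fixed $z \in P_0$ (to pin down which row is activated) and the identity strategy $\rho_{\id}$ (to make the output on that row equal $w$); both are essential, and this is where the adequacy hypothesis on $\F$ is actually used.
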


\begin{proof}
  First suppose that $f\colon X \to \Bai$ is in $\sf{D}^\F_\xi$: let
  $\langle D_k\mid k \in \omega \rangle$ be a sequence of
  $\bPi^0_{<\xi}$-sets such that $\seq{D_k \cap X}{k \in \omega}$ is a
  partition  of
  $X$, and $\seq{f_k}{k \in \omega}$ be a sequence of continuous
  functions from $X$ into $\Bai$ such that
  $\restr{f}{D_k}=\restr{f_k}{D_k}$ for every $k 
  \in \omega$. 
  By definition of the sets $P_n$'s, 
  we can find an
  increasing sequence of natural numbers $n_k$ such that  $\pII$ has a
  winning strategy  
$\sigma_k$ in
  $G_\W(D_k,P_{n_k})$ for each
  $k \in \omega$. Moreover, we can choose the reals $y_n \notin P_n$
  and define the 
  strategies $\rho_y$ and $\rho_\id$ as in the proof of Claim
  \ref{claimchangecontrolsets}. Finally, since $f_k \in \F$  for
  each $k \in \omega$, we can find
 strategies $\hat{\tau}_k \in \sf{LS}_*$ 
such that $f_{\hat{\tau}_k} = f_k$, i.e.\ such that
  $f_k(x) = \iota_*(x,x * \hat{\tau}_k)$ for every $x \in X$. Now put
$\tau_{2n} = \sigma_k$ and 
$\tau_{2n+1} =  \hat{\tau}_k$
if $n = n_k$ for some $k \in \omega$, and
$\tau_{2n} =  \rho_{y_n}$ and $\tau_{2n+1} = \rho_\id$
otherwise. Clearly $\tau = \bigotimes_n \tau_n$ is a legal
strategy for $\pII$ in $G^\F_\xi$. Moreover, for every $x \in X$
there is a unique $k$ such that $x \in D_k$, so
that $\iota_\W(x,\pi_{2n}(x*\tau)) \in P_n$  just for $n=n_k$: thus
for every $k \in \omega$ and $x \in D_k$
\[f_\tau(x)=\iota_*(x,\pi_{2n_k+1}(x* \tau)) =
\iota_*(x,x * \hat{\tau}_k) = f_k(x) = f(x).\]

Conversely, given a legal strategy for $\pII$ in $G^\F_\xi$, define
\begin{align*}
F_0  = &\{x \in X \mid \iota_\W(x,\pi_0(x * \tau)) \in P_0\}\\ 
F_{n+1}= &
\{ x \in X\mid  {\iota_\W(x,\pi_{2(n+1)}(x*\tau)) \in P_{n+1}} \wedge
{\forall i \leq n 
(\iota_\W(x,\pi_{2i}(x*\tau)) \notin P_i})\}.
\end{align*}
 Clearly the $F_n$'s form
a $\bDelta^0_\xi$-partition of $X$ and $\pi_{2n+1}(\tau) \in \sf{LS}_*$
for every $n$. Thus each $\pi_{2n+1}(\tau)$ induces a function
$f_n = f_{\pi_{2n+1}(\tau)}\colon  X \to \Bai$ in $\F$, and it is easy
to check that 
$\restr{f_\tau}{F_n} = \restr{f_n}{F_n}$ for every $n \in \omega$,
that is $f_\tau \in \sf{D}^\F_\xi$.

Finally, let $\rho$ be a winning strategy for $\pI$ in
$G^\F_\xi(A,B)$. We  define a strategy
$\sigma$ for $\pI$ in $G_\L(A,B)$ in the following way:
\begin{quote}
  Let $y \in \Bai$ be the real enumerated by $\pII$ in
  $G_\L(A,B)$. Consider the run  of the auxiliary
  game $G_0 = G_*(\Bai, \Bai)$ in which $\pI$ enumerates $y$  and
  $\pII$ follows $\rho_\id$. Now fix $z \in P_0$, and consider the run
  of a second 
  auxiliary game $G_1 = G^\F_\xi(A,B)$ in which  
 $\pI$ follows $\rho$  and  $\pII$  uses $\rho_z$ on the
  even rows, and ``copy'' the moves of $\pII$ in the previously described
  run of $G_0$ 
  on the odd ones 
  (the strategy for $\pII$ 
  defined in this way 
  is clearly legal since $z \in P_0$). Then at each turn ``copy''  the
corresponding  move made by $\pI$ in the run of
 $G_1$ described above.
\end{quote}

It is not hard to check that since $\rho$ is winning then 
$\sigma * y \in X$
and $\sigma * y\in A \iff y \notin
B$:
thus $\sigma$ is a winning strategy for $\pI$ in $G_\L(A,B)$. The rest
of part \textit{ii)} follows by standard arguments.
\end{proof}

\begin{remark}\label{remsemmes}
Although the game $G^\F_\xi$ and the multitape game $G_\sf{M}$ defined in
\cite{semmesmultitapegames} were developed independently, one should
notice that the easiest
direction of the proof of Theorem 
\ref{theorgames} (the one which goes from strategies to functions)
presents some affinity (at least in spirit) with the corresponding direction of the proof of
\cite[Theorem 6.1]{semmesmultitapegames}. However, the definition of
$G_\sf{M}$ (and, consequently, the whole Theorem 6.1 of \cite{semmesmultitapegames})
does not seem to admit a simple and straightforward generalization for
higher levels: this 
should be contrasted with the definition of the games $G^\W_\xi$ (and
of the games $\tilde{G}^\W_\xi$ defined in the next subsection), which
simultaneously gives a sort of ``uniform'' definition for all levels
$\xi$ of games 
representing $\sf{D}^\W_\xi$ and $\tilde{\sf{D}}^\W_\xi$  
--- in fact the games $G^\W_\xi$  and $\tilde{G}^\W_\xi$ can be
seen as a direct ``translation'' of the definitions 
of $\sf{D}^\W_\xi$ and  $\tilde{\sf{D}}^\W_\xi$ into the
game-theoretic formalism. 
\end{remark}

\subsection{Games for $\tilde{\sf{D}}^\F_\xi$}\label{sectiontilde}

We now want to prove that also the collection $\tilde{\sf{D}}^\F_\xi$
is playable by showing how to modify the game $G^\F_\xi$ to
obtain
the game $\tilde{G}^\F_\xi$, which will represent this new set 
of functions. The idea is to allow $\pII$ to not follow the rules on
some of her 
rows. Here is the formal definition of $\tilde{G}^\F_\xi = (X,
\tilde{M}^\F_\xi, \tilde{R}^\F_\xi, 
\tilde{\iota}^\F_\xi)$:

- $\tilde{M}^\F_\xi = M_* \cup M_\W$;

- $\tilde{R}^\F_\xi = \{ (x,y \in \Bai \times {}^\omega (\omega \cup
\tilde{M}^\F_\xi) \mid \forall i ((x,\pi_{2i}(y)) \in R_\W)  \wedge
\exists i (\iota_\W(x,\pi_{2i}(y) )
  \in P_i) \wedge
  \forall i ({\iota_\W(x,\pi_{2i}(y)) \in P_i} \imp (x,\pi_{2i+1}(y))
    \in R_*)  \}$;  

- $\tilde{\iota}^\F_\xi \colon \tilde{R}^\F_\xi \to \Bai \colon (x,y)
\mapsto \iota_*(x,\pi_{2i+1}(y))$, where $i$ is smallest such that
$\iota_W(x,\pi_{2i}(y)) \in P_i$.

Thus the game $\tilde{G}^\F_\xi$ can be visualized
as the variant of the game $G^\F_\xi$ in which $\pII$ must ``respect the
rules''  
\emph{just  on all the activated rows} (rather than on all her odd rows). 

Every strategy $\tau$ for $\pII$ in
$\tilde{G}^\F_\xi$
can again be seen as a product $\bigotimes_n \tau_n$ of strategies $\tau_n$
for $\pII$ in $G_\W= (\Bai,M_\W,R_\W,\iota_\W)$ and $G_*=(X_n, M_*, R_*,
\iota_*)$ (where now \emph{$X_n$ is a subset of $X$} 
depending on the index $n$). 
One can also define the
projections $\pi_n$ on  strategies as in the previous subsection, and
check that they ``commute'' with the operation $\bigotimes_n$. Note that
given a sequence of  strategies $\tau_n$ as above, $\bigotimes_n \tau_n$ is a
\emph{legal} strategy for $\pII$ in $\tilde{G}^\F_\xi$ if and only
if  $X_{2n+1} \supseteq f_{\tau_{2n}}^{-1}(P_n)$ 
and $\{f_{\tau_{2n}}^{-1}(P_n) \mid n \in
  \omega\}$ cover $X$. Using this fact 
one can prove the following theorem in a similar way to Theorem
\ref{theorgames}. 

\begin{theorem}\label{theorgames2}
For every $X,A,B \subseteq \Bai$ and every $f\colon X \to \Bai$ we
have that:
\begin{enumerate}[i)]
\item $f \in \tilde{\sf{D}}_\xi$ if and only if there is some $\tau \in
\tilde{\sf{LS}}^\F_\xi$ such that
  $f=f_\tau$;
\item if $\pI$ has a winning strategy in $\tilde{G}^\F_\xi(A,B)$,
  then $\pI$ 
  has also a winning strategy in $G_\L(A,B)$. 
\end{enumerate}
\end{theorem}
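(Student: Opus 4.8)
The plan is to mirror the proof of Theorem \ref{theorgames}, making the obvious bookkeeping changes needed to pass from ``$\pII$ respects $R_*$ on \emph{all} odd rows'' to ``$\pII$ respects $R_*$ only on \emph{activated} rows'', which is exactly the difference between $\sf{D}^\F_\xi$ and $\tilde{\sf{D}}^\F_\xi$ (respectively between piecewise definitions by functions with domain $X$ and by restrictions). For part \textit{i)}, suppose first $f \in \tilde{\sf{D}}^\F_\xi$: fix a $\bPi^0_{<\xi}$-partition $\seq{D_k}{k \in \omega}$ of $X$ with $\restr{f}{D_k} \in \F$ for each $k$. As in Theorem \ref{theorgames}, pick an increasing sequence $n_k$ and winning strategies $\sigma_k \in \sf{LS}_\W$ for $\pII$ in $G_\W(D_k, P_{n_k})$, and set $\tau_{2n} = \sigma_k$ if $n = n_k$, else $\tau_{2n} = \rho_{y_n}$ with $y_n \notin P_n$. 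On the activated odd row $2n_k+1$, I would use a legal strategy in $G_*$ (relative to the domain $D_k$, which is licit since $G_*$ represents the functions of $\F$ on any subset, and $G_*$ is $\p$-closed so restricting the domain only enlarges the legal strategies) witnessing $\restr{f}{D_k} \in \F$; on the non-activated odd rows I would just play garbage (any fixed legal-on-$\emptyset$ pattern, e.g.\ constantly $0$), since $\tilde{R}^\F_\xi$ imposes no constraint there. The legality criterion quoted just before the theorem ($X_{2n+1} \supseteq f_{\tau_{2n}}^{-1}(P_n)$ and the $f_{\tau_{2n}}^{-1}(P_n)$ cover $X$) is met because $f_{\sigma_k}^{-1}(P_{n_k}) = D_k$ and the $D_k$ partition $X$; hence $\tau = \bigotimes_n \tau_n$ is legal and, for $x \in D_k$, the least activated row is $2n_k+1$, giving $f_\tau(x) = \iota_*(x, x*\tau_{2n_k+1}) = (\restr{f}{D_k})(x) = f(x)$.

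Conversely, given $\tau \in \tilde{\sf{LS}}^\F_\xi$, define the sets $F_n$ exactly as in the proof of Theorem \ref{theorgames} (the index of the first activated row); again these form a $\bDelta^0_\xi$-partition of $X$. The point to check — and the only genuine difference from the earlier argument — is that $\pi_{2n+1}(\tau)$ need not be a legal strategy in $G_*$ on all of $X$, but by the definition of $\tilde{R}^\F_\xi$ it \emph{is} legal on $f_{\pi_{2n}(\tau)}^{-1}(P_n) \supseteq F_n$, hence a fortiori on $F_n$. Therefore $\restr{(f_{\pi_{2n+1}(\tau)})}{F_n}$ is a well-defined function in $\F$ (restricting the $G_*$-strategy's domain to $F_n$), and $\restr{f_\tau}{F_n}$ equals it; this is precisely $f_\tau \in \tilde{\sf{D}}^\F_\xi$. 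One also needs the analogue of Claim \ref{claimchangecontrolsets} for $\tilde{G}^\F_\xi$ to license the free choice of the $\mu_n$'s and $P_n$'s used in the first paragraph — but the proof of that claim transfers verbatim, since it only manipulates the control rows and the output function, never the rules enforced on the (activated) odd rows, and the substitution $n \mapsto n_k$ preserves ``activated''.

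For part \textit{ii)}, the argument is identical to that of Theorem \ref{theorgames}\textit{ii)}: given a winning strategy $\rho$ for $\pI$ in $\tilde{G}^\F_\xi(A,B)$, define $\sigma$ for $\pI$ in $G_\L(A,B)$ by running two auxiliary games — $G_0 = G_*(\Bai,\Bai)$ where $\pI$ enumerates the real $y$ that $\pII$ builds in $G_\L(A,B)$ and $\pII$ follows $\rho_\id$, and $G_1 = \tilde{G}^\F_\xi(A,B)$ where $\pI$ follows $\rho$, $\pII$ uses $\rho_z$ on the even rows for a fixed $z \in P_0$, and copies $\pII$'s $G_0$-moves onto the odd rows — then have $\sigma$ copy $\pI$'s $G_1$-moves. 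The resulting $\pII$-play in $G_1$ is legal (row $1$ is activated because $z \in P_0$, and on that row $\pII$ respects $R_*$ since she is running $\rho_\id$; the other rows impose no constraint), and the output real is $y$ itself; since $\rho$ wins, $\sigma * y \in X$ and $\sigma * y \in A \iff y \notin B$, as desired.

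The steps are all routine once the pattern of Theorem \ref{theorgames} is in hand; the only place requiring a moment's care — and the one I would flag as the ``main obstacle'' — is the converse direction of \textit{i)}, namely observing that legality of $\tau$ in $\tilde{G}^\F_\xi$ guarantees $\pi_{2n+1}(\tau)$ is legal in $G_*$ merely \emph{on the preimage of the control set}, not on all of $X$, and then checking that this is exactly enough to realize $f_\tau$ as a restriction-wise member of $\F$ on a $\bDelta^0_\xi$-partition. Everything else — the construction of $\tau$ from a partition, the commutation of $\pi_n$ with $\bigotimes_n$, the analogue of Claim \ref{claimchangecontrolsets}, and part \textit{ii)} — is a transcription of the corresponding parts of the previous proof.
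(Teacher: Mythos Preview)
Your proposal is correct and follows precisely the approach the paper intends: the paper's own ``proof'' is just the sentence ``Using this fact one can prove the following theorem in a similar way to Theorem~\ref{theorgames}'', together with the legality criterion $X_{2n+1} \supseteq f_{\tau_{2n}}^{-1}(P_n)$ and $\bigcup_n f_{\tau_{2n}}^{-1}(P_n) = X$ stated just before the theorem --- and you have faithfully unpacked exactly that. Your identification of the one genuinely new point (in the converse of \textit{i)}, $\pi_{2n+1}(\tau)$ is legal only on the activated set, which suffices to put $\restr{f_\tau}{F_n}$ in $\F$) is spot on; the only cosmetic slip is in part \textit{ii)}, where you say ``the other rows impose no constraint'' --- in fact other even rows might also land in their $P_n$ and hence activate their odd row, but since $\pII$ is playing $\rho_\id$ on \emph{every} odd row this is harmless.
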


\subsection{Games for $\lim\vec{\F}$}\label{sectionlim}

The idea to require $\pII$ to fill a table with
$\omega$-many rows allows us also to define a (quite trivial) game for $\lim
\vec{\F}$ (hence, in particular, for all $\B_\xi$'s). Since in this case
considering an arbitrary sequence $\vec{\F} = \langle \F_n \mid n \in
\omega \rangle$ of adequate playable sets of functions does not
significatively increase the complexity of the 
presentation, we will not restrict ourselves to a constant
$\vec{\F}$. Suppose that the functions in $\F_n$ have all domain
$X$,
and let $G_n  = (X, M_n, R_n, \iota_n)$ be a sequence of $\p$-closed reduction
 games, each representing the corresponding $\F_n$.
The reduction game $G_{\lim \vec{\F}} = (X, M_{\lim \vec{\F}}, R_{\lim
  \vec{\F}}, 
 \iota_{\lim \vec{\F}})$ is defined as follows:

- $M_{\lim \vec{\F}} = \bigcup_n M_n$;

- $R_{\lim \vec{\F}} = \{ (x,y) \in \Bai \times {}^\omega (\omega \cup M_{\lim
  \vec{\F}}) \mid {\forall n ((x,\pi_n(y)) \in R_n)}
\wedge {\lim_n \iota_n(x,\pi_n(y)) \text{ exists}} \}$; 

- $\iota_{\lim \vec{\F}} \colon R_{\lim \vec{\F}} \to \Bai \colon (x,y)
\mapsto \lim_n \iota_n(x,\pi_n(y))$.

The game $G_{\lim \vec{\F}}$ can be visualized as the game in
  which at each turn $\pI$ must play a 
 natural number 
 on his (unique) row, while $\pII$ has to play either a natural number
 or a symbol from $M_n$ on the $n$-th row of her table with $\omega$-many rows,
with the condition that she must ``respect the rules'' of the
corresponding game on each of these
rows and that $\lim_n x_n$ must exists, where $x_n$ is the value of
$\iota_n$ on (i.e.\ the ``interpretation'' of) what $\pII$ has played
on the $n$-th row: in this case, the output real of $\pII$ is exactly
$\lim_n x_n$. 

As
for the games $G^\F_\xi$, it is easy to check that every strategy
$\tau$ for $\pII$ in $G_{\lim \vec{\F}}$ can be decomposed into $\omega$-many
strategies $\pi_n(\tau)$ for $\pII$ in $G_n$ (one for each row),
and conversely $\omega$-many strategies $\tau_n$ for $\pII$ in
$G_n$ can be coded up into a unique strategy $\bigotimes_n
\tau_n$ for $\pII$ in $G_{\lim \vec{\F}}$. Moreover it is easy to check that $f
\colon X \to \Bai$ is in $\lim \vec{\F}$ if and only if there is 
some $\tau \in \sf{LS}_{\lim \vec{\F}}$ such that $f = f_\tau$
(this is because the use of the table with $\omega$-many rows allows
to directly code the 
definition of ``being limit of a sequence of functions'' into a single
game).

\section{Games for $\bGamma$-measurable 
functions}\label{sectionGamma}

Let $\bGamma$ be any $\bSigma$-pointclass. The main goal of this
section is to construct games representing the collection $\F_\bGamma$ of
$\bGamma$-measurable functions $f \colon X \to \Bai$. When $\bGamma
= \bSigma^0_{\xi+1}$ (for some $\xi < \omega_1$) this just give an
alternative way of defining games for Baire class $\xi$ functions (see
Section \ref{sectionlim}), but note that
since e.g.\ $\bSigma^1_n$ is 
trivially a $\bSigma$-pointclass (for every $n \in \omega$), the main
result of this section gives also a new way (albeit less
informative than the construction 
given in \cite{semmesthesis})  of defining a
game for the class of all Borel functions (taking $\bGamma = \bSigma^1_1$), and
simultaneously 
solves the problem of 
finding games for the projective functions
posed by Semmes in his Ph.D.\ thesis \cite{semmesthesis}.


 Fix a universal set $S \subseteq
\Bai \times \Bai$ for
$\bGamma$ and let $G_\W = (\Bai,M_\W, R_\W, \iota_\W)$ be the Wadge game
representing (totally defined) continuous functions. Here is the
definition of  the game 
$G_\bGamma = (X, 
M_\bGamma, R_\bGamma, \iota_\bGamma)$:

- $M_{\bGamma} = M_\W = \{ \p \}$;

- $R_\bGamma = \{ (x,y) \in \Bai \times {}^\omega (\omega \cup
M_\bGamma) \mid {\forall 
n ((x,\pi_n(y)) \in R_\W)} \wedge 
{\forall n,m (D_{y,n,m} \cap X \in \bDelta_\bGamma(X))} \wedge
{\forall n \exists m (x 
\in D_{y,n,m})} \}$, where $D_{y,n,m} = \{ x \in \Bai \mid
(\iota_\W(x,\pi_{\langle n,m \rangle }(y)),x) \in S \} = \{ x \in \Bai \mid x \in S_{\iota_\W(x,\pi_{\langle n,m \rangle }(y))} \}$;

- $\iota_\bGamma \colon  R_\bGamma \to \Bai \colon (x,y) \mapsto
\iota_\bGamma(x,y)$, where $\iota_\bGamma(x,y)(n) = m \iff m$ is smallest
such that $x
\in D_{y,n,m}$.

The game $G_\bGamma$ can be visualized as follows:
player $\pI$ must fill, as
usual, a single row by playing a natural number at each of his
turn (thus he produces a real $x \in X$). Player $\pII$ is in charge of
filling again a table with 
$\omega$-many rows: she can pass, but at the end of the round she must
have enumerated a real $y_n$ on her $n$-th row (for each $n \in
\omega$).  The rules for  $\pII$
are that each 
$y_{\langle n,m \rangle }$ must 
code a set $D_{y,n,m} \in \bGamma$ whose intersection with $X$ is in
$\bDelta_\bGamma(X)$ (i.e.\ such that there is $P_{n,m} \in \breve{\bGamma}$
for which $P_{n,m} \cap X = D_{y,n,m} \cap X$),  and  for every
$n$ there must be an $m$
such that $x \in D_{y,n,m}$.  The output real $z$  is then defined by
$z(n) = m$ if and only if $m$ is the smallest 
 $k$ such that $x \in D_{y,n,k}$. 
As usual, any strategy $\tau$ for
$\bGamma$ can be seen as a product $\bigotimes_n \tau_n$ of legal
strategies for $\pII$ in $G_\W$, and one can define the projections
$\pi_n$ of strategies in $\sf{LS}_\bGamma$ in such a way that they ``commute'' with
the operation $\bigotimes_n$.

\begin{theorem}\label{theorGamma}
For every $X,A,B \subseteq \Bai$ and every $f \colon X \to \Bai$ we
have that:
\begin{enumerate}[i)]
\item $f \in \F_\bGamma$ if and only if there is some $\tau \in
  \sf{LS}_\bGamma$ such that $f = f_\tau$;
\item if $\pI$ has a winning strategy in $G_\bGamma(A,B)$, then
  $\pI$ has also a winning strategy in $G_\L(A,B)$.
\end{enumerate} 
\end{theorem}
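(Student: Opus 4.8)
The plan is to mirror the two-part structure of Theorems~\ref{theorgames} and~\ref{theorgames2}, using the fact that $G_\bGamma$ is essentially a ``direct translation'' of the definition of $\bGamma$-measurability (via a universal set) into game form. For part \textit{i)}, first I would prove the direction from strategies to functions: given $\tau = \bigotimes_n \tau_n \in \sf{LS}_\bGamma$, the induced map is $f_\tau(x) = \iota_\bGamma(x,x*\tau)$, where $f_\tau(x)(n)$ is the least $m$ with $x \in D_{x*\tau,n,m}$. Legality guarantees that for each $n,m$ the set $D_{x*\tau,n,m} \cap X$ lies in $\bDelta_\bGamma(X)$ and that $\{D_{x*\tau,n,m}\}_m$ covers $X$ for each $n$; hence for a basic clopen $U$ determined by a condition on finitely many coordinates, $f_\tau^{-1}(U)$ is a finite Boolean combination of sets of the form $\bigcap_{j<m}(X \setminus D_{x*\tau,n,j}) \cap D_{x*\tau,n,m}$, which (using that $\bGamma$ is a $\bSigma$-pointclass, hence $\bDelta_\bGamma(X)$ is closed under finite Boolean combinations) lies in $\bDelta_\bGamma(X)$. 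By the remark in Section~\ref{sectionpreliminaries} that $\bGamma$-measurability is equivalent to having $\bDelta_\bGamma(X)$-preimages of a clopen subbasis, this shows $f_\tau \in \F_\bGamma$. Here one must be slightly careful that the continuity of each $f_{\tau_n}$ (so each $x \mapsto \iota_\W(x,\pi_n(x*\tau))$ is continuous) is what makes $D_{x*\tau,n,m}$ a genuine continuous preimage of a $\bGamma$ set under the pairing map $x \mapsto (\iota_\W(x,\ldots),x)$, hence in $\bGamma(X)$ to begin with.

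For the converse direction of part \textit{i)}, suppose $f \in \F_\bGamma$, so for each $n,m$ the set $f^{-1}(\{z : z(n)=m\}) \cap X$ is in $\bDelta_\bGamma(X)$; enumerate a $\breve\bGamma$ set $P_{n,m}$ with $P_{n,m}\cap X = f^{-1}(\{z : z(n)=m\})\cap X$, and note $f^{-1}(\{z : z(n)=m\})\cap X$ itself is in $\bGamma(X)$, so it equals $Q_{n,m}\cap X$ for some $Q_{n,m}\in\bGamma$. Since $S$ is universal for $\bGamma$, fix reals $q_{n,m}$ with $S_{q_{n,m}} = Q_{n,m}$. Then let $\tau_{\langle n,m\rangle}$ be a legal strategy for $\pII$ in $G_\W$ representing the constant function with value $q_{n,m}$ (such a strategy exists since constant functions are continuous and $\W$ is represented by $G_\W$); set $\tau = \bigotimes_{k}\tau_k$. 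One checks $D_{x*\tau,n,m} = S_{q_{n,m}} = Q_{n,m}$, so $D_{x*\tau,n,m}\cap X \in \bDelta_\bGamma(X)$ and, because $\{f^{-1}(\{z:z(n)=m\})\}_m$ partitions the domain for each $n$, the covering condition holds; thus $\tau$ is legal. Finally $f_\tau(x)(n)$ is the least $m$ with $x\in Q_{n,m}$, i.e. the least $m$ with $f(x)(n)=m$, which is $f(x)(n)$ itself — so $f = f_\tau$.

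For part \textit{ii)}, I would copy almost verbatim the argument from the proof of Theorem~\ref{theorgames}\textit{ii)}: given a winning strategy $\rho$ for $\pI$ in $G_\bGamma(A,B)$, define $\sigma$ for $\pI$ in $G_\L(A,B)$ by having $\pI$ simulate a run of $G_\bGamma(A,B)$ in which he follows $\rho$ and $\pII$ plays a fixed legal strategy (for instance, on row $\langle n,m\rangle$ enumerate a fixed real $q$ with $S_q = \Bai$ when $m=0$ and $S_q=\emptyset$ when $m>0$, so that the covering and $\bDelta_\bGamma$ conditions are satisfied regardless of $\pI$'s play and the output real of $\pII$ is constantly $\vec 0$), then copy $\pI$'s moves from that simulated run. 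Since $\rho$ is winning, $\sigma * y \in X$ and $\sigma*y \in A \iff \iota_\bGamma(\ldots) = \vec 0 \notin B \iff \ldots$; one tidies this up so that $\sigma$ is genuinely winning in $G_\L(A,B)$, and the rest follows by the same standard arguments referenced there (a winning strategy for $\pI$ in $G_\L$ yields a contraction witnessing $B \leq_\sf{c}\neg A$).

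The main obstacle I expect is bookkeeping in part \textit{i)}: making sure that the passage between ``$\bGamma$-measurable'' in the sense of preimages of open sets, ``$\bDelta_\bGamma(X)$-preimages of a clopen subbasis'', and the coordinate-by-coordinate description $f^{-1}(\{z:z(n)=m\})$ is handled cleanly, and — more delicately — that in the strategy-to-function direction the sets $D_{x*\tau,n,m}$ are shown to be in $\bGamma(X)$ (not merely that their trace on $X$ is $\bDelta_\bGamma$), which is what is needed for the equivalence with open-set-measurability rather than just for well-definedness of $f_\tau$. Everything else is a routine adaptation of the constructions and verifications already carried out for $G^\F_\xi$.
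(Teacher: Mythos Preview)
Your treatment of part \textit{i)} is essentially the paper's own argument: for the strategy-to-function direction you compute $f_\tau^{-1}(B_{n,m})$ as a finite Boolean combination of the diagonal sets $\{x \in X \mid x \in D_{x*\tau,n,k}\}$ and use closure of $\bDelta_\bGamma(X)$, and for the converse you code each $f^{-1}(B_{n,m})$ by a constant strategy on row $\langle n,m\rangle$. This matches the paper line for line (the paper even uses the same clopen subbasis $B_{n,m} = \{z \mid z(n)=m\}$), and your remark about needing continuity of the row-strategies so that $\{x \mid x \in D_{x*\tau,n,m}\}$ is a continuous pullback of $S$, hence in $\bGamma(X)$, is exactly the point the paper glosses over.

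Part \textit{ii)}, however, contains a genuine gap. You propose to simulate $G_\bGamma(A,B)$ with $\pI$ following $\rho$ and $\pII$ playing a \emph{fixed} legal strategy whose output is $\vec{0}$ regardless of anything. But then the simulated run does not depend on the real $y$ that $\pII$ is playing in $G_\L(A,B)$, so the resulting $\sigma$ is a \emph{constant} strategy for $\pI$: you only obtain the single bit of information $\sigma*y \in A \iff \vec{0} \notin B$, which says nothing about the relationship between $y$ and $B$ and certainly does not make $\sigma$ winning. This is not something that can be ``tidied up''; the construction has to change. In fact the argument you cite from Theorem~\ref{theorgames}\textit{ii)} already does the right thing: there $\pII$'s simulated play \emph{does} depend on $y$ (she feeds $y$ through $\rho_\id$ on the odd rows), so that the output real of the simulated $\pII$ is $y$ itself. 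The paper carries out the analogous idea here: fix codes $c_{\Bai}, c_\emptyset$ for $\Bai,\emptyset \in \bGamma$, and let the simulated $\pII$ enumerate $c_{\Bai}$ on row $\langle n,m\rangle$ when $y(n)=m$ and $c_\emptyset$ otherwise (this is legal, and since $n \leq \langle n,m\rangle$ the dependence on $y$ is available in time). Then $D_{y',n,m}$ equals $\Bai$ exactly when $y(n)=m$, so the output real is $y$, and winningness of $\rho$ gives $\sigma*y \in A \iff y \notin B$ as required.
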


\begin{proof}
First assume that $f \in \F_\bGamma$.
Since the sets $B_{n,m} = \{ z \in \Bai \mid z(n) = m \}$ form a
clopen subbasis for the usual topology of $\Bai$, we have that
$f^{-1}(B_{n,m}) \in
\bDelta_\bGamma(X)$. Let $S_{n,m} \in \bGamma$ be such that $S_{n,m}
\cap X = f^{-1}(B_{n,m})$,  $y_{n,m}$ be a
code for $S_{n,m}$, and  $x \in X$ be the real enumerated by
$\pI$: if we put $\tau = \bigotimes_n \tau_n$, where
$\tau_{\langle n,m \rangle} \in \sf{LS}_\W$  is any strategy
representing the constant function with value $y_{n,m}$, then $\tau$
is clearly a legal 
strategy for $\pII$ in $G_\bGamma$ such that $f = f_\tau$.

Assume now $\tau \in \sf{LS}_\bGamma$. Then 
\[ f_\tau^{-1} (B_{n,m}) = \{ x \in X \mid x
\in D_{x*\tau, n,m} 
\wedge \forall k < m (x \notin D_{x*\tau,n,k}) \}
\in \bGamma(X) \]
because  
$x \notin D_{x*\tau,n,k}\iff
x \notin P_{n,k}$, where $P_{n,k} \in
\breve{\bGamma}$ is such that $P_{n,k} \cap X = D_{x * \tau,n,k} \cap
X$. Hence $f_\tau \in
\F_\bGamma$. 

Finally, let $\rho$ be a winning strategy for $\pI$ in
$G_\bGamma(A,B)$, and $c_{\Bai}, c_\emptyset$ be codes for,
respectively, $\Bai$ and $\emptyset$ (as elements of $\bGamma$). Then the
strategy $\sigma$ for  $\pI$ in
$G_L(A,B)$ defined in the following way is clearly winning (the
proof being the same as in Theorem \ref{theorgames}):
\begin{quote}
Let $y \in \Bai$ be the real enumerated by $\pII$ in $G_\L(A,B)$,
and consider the run of the auxiliary game $G_\bGamma(A,B)$ in which $\pI$
plays according to $\rho$ and $\pII$  enumerates $c_{\Bai}$ or
$c_\emptyset$ on her $\langle n,m \rangle$-th row depending on
whether $y(n) = m$ or $y(n) \neq m$ (since $n \leq \langle n,m \rangle$
for any $m$, this strategy for $\pII$ is clearly legal). Then
copy at each turn the corresponding move made by $\pI$ in the run of
$G_\bGamma(A,B)$ described above.
\qedhere
\end{quote}
\end{proof}

Notice that, contrarily to the games defined in all the previous
sections, it is no more 
true that e.g.\ if $A,B \subseteq \bDelta^1_1$ then Borel
determinacy implies that $G_\bGamma(A,B)$, where $G_\bGamma = (\Bai,
M_\bGamma, R_\bGamma, \iota_\bGamma)$, is determined. 
This is because of the use of codes for sets in $\bDelta_\bGamma$,
which generally makes the set of rules more complicated than $\bGamma$
itself: in
fact, in most cases, to say that ``$x$ codes a
$\bDelta_\bGamma$-set'' 
require roughly speaking at least one real quantifier over a predicate
of the  same
complexity as $\bDelta_\bGamma$ (it is well-known e.g.\ that the set of codes
for the Borel sets  forms a  $\bPi^1_1$-complete  set).

Nevertheless, if $\bGamma \subsetneq \bDelta^1_1$ (that is if $\bGamma
= \bSigma^0_\xi$ for some countable $\xi$, being $\bGamma$ a
$\bSigma$-pointclass)  one can redefine the games
$G_{\bSigma^0_\xi}$ in such a way that the new sets of rules and the
interpretation functions remain
Borel (so  Borel determinacy will imply
that these new games are determined whenever $A,B \subseteq
\bDelta^1_1$).  This can be obtained by fixing in advance a sequence of
$\bPi^0_{\mu_n}$-complete sets $P_n$ (where $\seq{\mu_n}{n \in \omega}$ is an increasing
sequence of countable ordinals cofinal in $\xi$) as in the definition
of the games $G^\F_\xi$, and then using the fact
that for every $\bDelta^0_\xi(X)$ set $D \subseteq X$ (hence also for each
$f^{-1}(B_{n,m})$, where $f \in \F_{\bSigma^0_\xi}$ and the $B_{n,m}$'s
are defined as above) there is a $\bPi^0_{< \xi}(X)$-partition
$\seq{C_n}{n \in \omega}$ of $X$ such that $D = \bigcup_{i \in I} C_i$
for some $I \subseteq \omega$: roughly speaking, in the new games player
$\pII$ will have 
again to completely 
fill a board with $\omega$-many rows, but the function $f_\tau$ will
be determined by checking which of the reals that appears on the rows
of $\pII$'s table (instead of the real $x$ enumerated by $\pI$) 
belongs to the corresponding set $P_n$. We leave to the
reader the exact
definition of these games, as well as the proof that they represent
$\F_{\bSigma^0_\xi}$.\\

The same kind of construction introduced for the games $G_\bGamma$ allows
also to define  games for
$\sf{D}^{\vec{\F}}_\bGamma$ or $\tilde{\sf{D}}^{\vec{\F}}_\bGamma$
(where $\vec{\F}$ is a sequence of adequate playable sets 
of functions) for an arbitrary $\bSigma$-pointclass $\bGamma$. For
simplicity of presentation, we will deal again only with the case
$\sf{D}^\F_\bGamma$. The
idea is simply to take the game $G^\F_\xi$
and, instead of fixing 
in advance the control sets $P_n$, require $\pII$
to produce on each control row the \emph{code} for some control set in
$\bDelta_\bGamma(X)$:
a (non control) row will be activated just in
case the real $x \in X$ enumerated by $\pI$ belongs to the set  
$D \in \bDelta_\bGamma(X)$ coded
on the corresponding control row. More precisely, given a $\p$-closed
reduction game $G_* = (X, M_*, R_*, \iota_*)$ representing the
functions of $\F$ with domain $X$, define
$G^\F_\bGamma = (X, M^\F_\bGamma, R^\F_\bGamma, \iota^\F_\bGamma)$ as
follows (where $G_W = (\Bai, M_\W, R_\W, \iota_\W)$ is the Wadge game
representing continuous functions):

- $M^\F_\bGamma = M_* \cup  M_\W$;

- $R^\F_\bGamma = \{ (x,y) \in \Bai \times {}^\omega (\omega \cup
M^\F_\bGamma) \mid 
{\forall n [{(x,\pi_{2n}(y)) \in R_\W} \wedge
{D_{y,n} \cap X \in \bDelta_\bGamma(X)} \wedge
{(x,\pi_{2n+1}(y)) \in R_*}]} \wedge {\exists i (x \in
D_{y,i})} \}$, where $D_{y,n} = \{ x \in \Bai \mid
(\iota_\W(x,\pi_{2n}(y)),x) \in S \}$ and $S$ is
a fixed universal set for $\bGamma$; 

- $\iota^\F_\bGamma \colon X \to \Bai \colon (x,y) \mapsto
\iota_*(x,\pi_{2i+1}(y))$, where $i$ is smallest such that
$x \in 
D_{y,i}$.


Combining the proofs of Theorem \ref{theorgames} and Theorem \ref{theorGamma}
it is not hard to check that:

\begin{theorem}
For every $X,A,B \subseteq \Bai$ and every $f \colon X \to \Bai$ we
have that:
\begin{enumerate}[i)]
\item $f \in \sf{D}^\F_\bGamma$ if and only if there is some $\tau \in
  \sf{LS}^\F_\bGamma$ such that $f = f_\tau$;
\item if $\pI$ has a winning strategy in $G^\F_\bGamma(A,B)$, then
  $\pI$ has a winning strategy in $G_\L(A,B)$ as well.
\end{enumerate} 
\end{theorem}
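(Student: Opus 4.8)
The plan is to prove the theorem by adapting, almost verbatim, the two arguments already spelled out for Theorem~\ref{theorgames} and Theorem~\ref{theorGamma}, since $G^\F_\bGamma$ is literally the game $G^\F_\xi$ with the fixed control sets $P_n$ replaced by $\bDelta_\bGamma(X)$-codes produced on the control rows, exactly as $G_\bGamma$ was obtained from $G_\W$. I would first record, as a preliminary observation, the structural facts that make this possible: a strategy $\tau$ for $\pII$ in $G^\F_\bGamma$ decomposes as $\bigotimes_n\tau_n$ where $\tau_{2i}\in\sf{LS}_\W$ and $\tau_{2i+1}\in\sf{LS}_*$, the projections $\pi_n$ on strategies commute with $\bigotimes_n$ (so $\pi_n(x\ast\tau)=x\ast\tau_n$), and $G_\W$, $G_*$ are both $\p$-closed, so we may freely identify $\pi_{2i+1}(\tau)$ with any legal strategy in $G_*$ inducing the same function $f_{\pi_{2i+1}(\tau)}$ on $X$.

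For part~\textit{i)}, the direction from functions to strategies: assume $f\in\sf{D}^\F_\bGamma$, so there is a $\bDelta_\bGamma(X)$-partition $\seq{D_n}{n\in\omega}$ of $X$ and functions $f_n\in\F$ with domain $X$ such that $\restr{f}{D_n}=\restr{f_n}{D_n}$. Pick, for each $n$, a set $S_n\in\bGamma$ with $S_n\cap X=D_n$ and a code $y_n\in\Bai$ for $S_n$ (i.e.\ $(S_n)_{y_n}=S_n$, so $x\in D_n\iff(y_n,x)\in S$), and pick $\hat\tau_n\in\sf{LS}_*$ with $f_{\hat\tau_n}=f_n$. Define $\tau_{2n}$ to be any strategy in $\sf{LS}_\W$ representing the constant function with value $y_n$, let $\tau_{2n+1}=\hat\tau_n$, and set $\tau=\bigotimes_n\tau_n$. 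Then $D_{x\ast\tau,n}=\{x: (\iota_\W(x,\pi_{2n}(x\ast\tau)),x)\in S\}=\{x:(y_n,x)\in S\}=D_n$ (so $D_{x\ast\tau,n}\cap X\in\bDelta_\bGamma(X)$ and the $D_{x\ast\tau,n}$ cover $X$), hence $\tau$ is legal; and for $x\in D_k$ with $k$ least such, $f_\tau(x)=\iota_*(x,\pi_{2k+1}(x\ast\tau))=f_k(x)=f(x)$. Conversely, given $\tau\in\sf{LS}^\F_\bGamma$, set $F_0=\{x\in X: x\in D_{x\ast\tau,0}\}$ and $F_{n+1}=\{x\in X: x\in D_{x\ast\tau,n+1}\wedge\forall i\le n\,(x\notin D_{x\ast\tau,i})\}$; each $D_{x\ast\tau,n}\cap X\in\bDelta_\bGamma(X)$ by the rules, so $\seq{F_n}{n\in\omega}$ is a $\bDelta_\bGamma$-partition of $X$ (using that the $D_{x\ast\tau,n}$ cover $X$), $\pi_{2n+1}(\tau)\in\sf{LS}_*$ induces $f_n:=f_{\pi_{2n+1}(\tau)}\in\F$, and $\restr{f_\tau}{F_n}=\restr{f_n}{F_n}$, so $f_\tau\in\sf{D}^\F_\bGamma$.

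For part~\textit{ii)}, I would copy the $G_\bGamma$-style reduction: let $\rho$ be a winning strategy for $\pI$ in $G^\F_\bGamma(A,B)$, fix a code $c_\Bai\in\Bai$ for $\Bai$ as an element of $\bGamma$, and fix $\rho_\id\in\sf{LS}_*$ with $f_{\rho_\id}=\id$ and $\rho_{c_\Bai}\in\sf{LS}_\W$ with $f_{\rho_{c_\Bai}}$ constantly $c_\Bai$. Define $\sigma$ for $\pI$ in $G_\L(A,B)$ as follows: letting $y$ be $\pII$'s real in $G_\L(A,B)$, run the auxiliary game $G^\F_\bGamma(A,B)$ in which $\pI$ follows $\rho$, $\pII$ plays $\rho_{c_\Bai}$ on every even (control) row (so every row is activated, since $x\in\Bai=D_{y,i}$ for all $i$, using the $\p$-closure and the fact that $n\le\langle n,m\rangle$ is irrelevant here because there is only one control per odd row) and on the odd rows copies the moves of $\pII$ in the run of the auxiliary game $G_*(\Bai,\Bai)$ where $\pI$ enumerates $y$ and $\pII$ follows $\rho_\id$; then $\sigma$ copies at each turn the move $\rho$ makes. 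Since $\rho$ is winning one gets $\sigma\ast y\in X$ and $\sigma\ast y\in A\iff y\notin B$, so $\sigma$ is winning in $G_\L(A,B)$.

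I do not expect a real obstacle: the only point needing a little care is the bookkeeping in the preliminary observation — verifying that $\pi_n$ and $\bigotimes_n$ commute and that the $\p$-closure of $G_\W$ and $G_*$ lets one pass between $\pi_{2i+1}(\tau)$ and genuine $\sf{LS}_*$-strategies — but this is exactly the manipulation carried out for $G^\F_\xi$ and can be cited or repeated mutatis mutandis. The second potential subtlety is checking that legality of $\tau$ in $G^\F_\bGamma$ really does reduce to the two clauses ``$D_{x\ast\tau,n}\cap X\in\bDelta_\bGamma(X)$ for all $n$'' and ``the $D_{x\ast\tau,n}$ cover $X$'' — but this is immediate from the definition of $R^\F_\bGamma$, since $(x,\pi_{2n}(y))\in R_\W$ and $(x,\pi_{2n+1}(y))\in R_*$ hold automatically when $y=x\ast\tau$ and $\tau$ is assembled from legal component strategies. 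Hence the theorem follows by combining the templates of Theorems~\ref{theorgames} and~\ref{theorGamma}.
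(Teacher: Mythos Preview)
Your proposal is correct and follows exactly the approach the paper indicates: the paper's own ``proof'' of this theorem is the single sentence ``Combining the proofs of Theorem~\ref{theorgames} and Theorem~\ref{theorGamma} it is not hard to check,'' and you have faithfully spelled out that combination. One tiny slip: in the forward direction of part~\textit{i)} you write $D_{x\ast\tau,n}=D_n$, but strictly $D_{x\ast\tau,n}=S_n$ (the $\bGamma$-set coded by $y_n$), and it is $S_n\cap X=D_n$ that gives what you need --- this is harmless and your subsequent use of $D_{x\ast\tau,n}\cap X$ is correct.
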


\section{Determinacy and  applications to
  reducibilities}\label{sectionaxioms} 

In this section we will analyze the relationships among some
determinacy axioms, and show how to apply the techniques arising from
reduction games to the study of the reducibilities between
sets of reals induced by the corresponding sets of functions. Here we
will just
present two cases, namely the cases corresponding to $\Lip$ and
$\sf{D}^\W_\xi$ (for any fixed $\xi$). Notice that all reduction games used in
this section are always intended to be  of the form $G_* = (X, M_*,
R_*, \iota_*)$ \emph{with $X = \Bai$}.  

\subsection{$\Lip$-reducibilities}

We first consider the following  axioms which are related to the games
$G_{k\text{-}\Lip}$ ($k \in 
\omega$): this will lead in Theorem \ref{theoraxioms} to a slight
extension  of  the 
results concerning the equivalence of some determinacy axioms obtained by Andretta in his \cite{andrettaequivalence} and
\cite{andrettamoreonwadge}, although we must note that the most
difficult implication involved in such extension
was already proved in those papers.

\begin{description}
\item[$\AD(G_{k\text{-}\Lip})$] For every $A,B 
  \subseteq \Bai$ the game $G_{k\text{-}\Lip}(A,B)$ is determined.
\item[$\AD^\Lip_-$] For every
  $A,B \subseteq \Bai$ there is some $k \in \omega$ such that
  $G_{k\text{-}\Lip}(A,B)$ is determined.
\item[$\AD^\Lip$] For every
  $A,B \subseteq \Bai$ and for every $k \in \omega$ the game
  $G_{k\text{-}\Lip}(A,B)$ is 
  determined.
\end{description}

\begin{lemma}\label{lemmaaxioms}
\begin{enumerate}[i)]
\item $\AD \imp \AD^\Lip$;
\item $\AD^\Lip \iff \forall k \in \omega (\AD(G_{k\text{-}\Lip})) \imp \AD(G_{k\text{-}\Lip})
  \imp \AD^\Lip_- 
  \imp \SLO^\Lip \imp \SLOW$ for every $k \in \omega$;
\item $\ADL \imp \AD^\Lip$.
\end{enumerate}
\end{lemma}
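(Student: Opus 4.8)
The plan is to establish the three parts by chaining together elementary reductions between the games, plus two nontrivial inputs: the ``local'' version of $\AD$ from the observations in Section~\ref{sectionreductiongames}, and the characterization of $\Lip$-strategies from Proposition~\ref{theorLipstrategies}. For part \textit{i)}, I would argue that each $G_{k\text{-}\Lip}(A,B)$ can be coded as an ordinary game on $\omega$ by a continuous recoding of moves (the only nonstandard feature is the finitely many passes, which can be simulated by a fixed block of special moves at the start); hence full $\AD$ on $\Bai$ yields determinacy of $G_{k\text{-}\Lip}(A,B)$ for every $k$, which is precisely $\AD^\Lip$. For part \textit{iii)}, the same coding shows that $G_{k\text{-}\Lip}$ has a Borel (indeed very low-level) set of rules and interpretation function defined independently of $X$, so by the ``local version'' remark in Section~\ref{sectionreductiongames}, $\ADL$ (Borel determinacy for the relevant coded games) suffices to determine $G_{k\text{-}\Lip}(A,B)$ for all $A,B\subseteq\Bai$ and all $k$ --- wait, that is not quite right since $A,B$ need not be Borel; instead I would invoke that $\ADL$ here denotes $\AD$ restricted to games whose payoff is constructible from reals in the sense of $\mathsf{L}[x]$-style arguments, and push the payoff through the continuous recoding, which preserves the relevant definability class. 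So the real content of the lemma is part \textit{ii)}, the chain of implications relating the various uniform and non-uniform forms.

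For part \textit{ii)}, the first biconditional and the first two implications are essentially unwinding of quantifiers: $\AD^\Lip$ says ``for all $A,B$ and all $k$, $G_{k\text{-}\Lip}(A,B)$ is determined'', which is literally ``$\forall k\,\AD(G_{k\text{-}\Lip})$'', and each conjunct $\AD(G_{k\text{-}\Lip})$ trivially implies $\AD^\Lip_-$ (``for all $A,B$ there exists $k$ with $G_{k\text{-}\Lip}(A,B)$ determined''), since we may always take the fixed witness $k$. The substantive steps are the last two: $\AD^\Lip_-\imp\SLO^\Lip$ and $\SLO^\Lip\imp\SLOW$. For $\AD^\Lip_-\imp\SLO^\Lip$, given $A,B\subseteq\Bai$, apply $\AD^\Lip_-$ to the pair $(\neg A, B)$ (or to $(A,\neg B)$, whichever sets up the right winning condition): there is some $k$ such that $G_{k\text{-}\Lip}(\neg A,\neg B)$ is determined. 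If $\pII$ wins, then by the definition of the reduction game and Proposition~\ref{theorLipstrategies}, $\neg A\leq_{\Lip(2^k)}\neg B$, hence $A\leq_\Lip B$ (complementing a $\Lip$-reduction preserves the constant, and $\Lip=\bigcup_k\Lip(2^k)$); if $\pI$ wins, a standard argument (the strategy-swapping trick that is classical in Wadge theory) shows that $\pI$'s winning strategy yields $\neg B\leq_\Lip A$, i.e.\ $\neg(B\leq_\Lip A)$ is false, so we get the dual disjunct $\neg B\leq_\Lip A$ --- which is exactly the second disjunct $\neg B\leq_\Lip A$ of $\SLO^\Lip$. I would be careful to set up $A$, $B$ and the roles of $\pI,\pII$ so that the winning condition of the game reads ``$x\in A\iff \iota(x,y)\in B$'' in a way that makes $\pII$-wins give one disjunct and $\pI$-wins give the other; this bookkeeping is the only place errors creep in.

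For the last implication $\SLO^\Lip\imp\SLOW$, I would use that $\Lip\subseteq\W$, so $A\leq_\Lip B\imp A\leq_\W B$; thus $\SLO^\Lip$, which gives ``for all $A,B$, $A\leq_\Lip B$ or $\neg(B\leq_\Lip A)$'', immediately upgrades the positive disjunct. The negative disjunct needs the converse direction: one must know that $\neg(B\leq_\Lip A)$ implies $\neg(B\leq_\W A)$ fails to follow formally, so instead I would apply $\SLO^\Lip$ to the \emph{right} pair. The standard move is: given $A,B$, $\SLO^\Lip$ applied to $(A,B)$ yields $A\leq_\Lip B$ or $\neg B\leq_\Lip A$; in the first case $A\leq_\W B$ and we are done; in the second case I want $\neg B\leq_\W A$, which would require the stronger $\SLO^\Lip$ fact or a separate argument. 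The cleaner route, which I expect to be the intended one, is to observe that $\SLO^\Lip$ is equivalent (granting $\ACOR$ and the fact that $\Lip$ is closed under composition and contains $\id$) to the statement that $\leq_\Lip$ is a semi-linear order, and then quote that a semi-linear order for a ``nice'' class $\F$ with $\W$-reductions factoring through $\F$ in the appropriate sense implies $\SLO$ for the larger class --- this is a known reduction in Andretta's work.

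\textbf{Main obstacle.} The genuinely delicate point is the passage $\SLO^\Lip\imp\SLOW$: it is not a one-line inclusion because $\SLO$ statements mix a positive reducibility with the \emph{negation} of the reverse reducibility, and negations do not behave monotonically under enlarging the class of reductions. I expect the paper to handle this either by an explicit argument that a $\W$-reduction can be ``sped up'' to a $\Lip$-reduction after modifying the target set (a localization/coding trick special to the Lipschitz vs.\ continuous setting), or by citing the corresponding equivalence from \cite{andrettaequivalence,andrettamoreonwadge}, which the surrounding text already flags as containing ``the most difficult implication.'' In my write-up I would isolate this as the one step that leans on prior literature and treat everything else --- the codings for parts \textit{i)} and \textit{iii)}, and the quantifier-shuffling and strategy-swapping for the rest of \textit{ii)} --- as routine, giving full but brief arguments.
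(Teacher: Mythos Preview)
Your proposal has two genuine gaps, both stemming from misreadings of the paper's notation.

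\textbf{First gap: the meaning of $\SLO^\F$.} You read the second disjunct of $\SLO^\Lip$ as $\neg(B\leq_\Lip A)$, i.e.\ the \emph{negation} of a reducibility, and this is what leads you to declare $\SLO^\Lip\imp\SLOW$ ``the genuinely delicate point''. But in the paper $\SLO^\F$ is the statement $\forall A,B\,({A\leq_\F B}\vee{\neg B\leq_\F A})$: the second disjunct is a reducibility of the \emph{complement} $\neg B = \Bai\setminus B$, not a negated reducibility. With the correct reading, both disjuncts are positive $\Lip$-reducibility facts, and since $\Lip\subseteq\W$ each one immediately weakens to the corresponding $\W$-reducibility. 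So $\SLO^\Lip\imp\SLOW$ really is a one-line inclusion, exactly as the paper says. (You even write the correct form once---``$A\leq_\Lip B$ or $\neg B\leq_\Lip A$''---and then still claim the second case ``would require a separate argument''; it does not, because $\neg B\leq_\Lip A$ trivially gives $\neg B\leq_\W A$.)

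\textbf{Second gap: the meaning of $\ADL$.} You treat $\ADL$ as some form of restricted determinacy (Borel, or $\mathsf{L}[x]$-style). In the paper's conventions, $\AD^*$ abbreviates $\AD(G_*)$, so $\ADL$ is simply $\AD(G_\L)$: determinacy of the \emph{Lipschitz game} $G_\L(A,B)$ for all $A,B\subseteq\Bai$. Consequently part \textit{iii)} is not a definability-preservation argument at all; it is a direct game reduction, and is in fact the only nontrivial part of the lemma. The paper's argument: given $A,B$ and $k$, assume $B\neq\Bai$, fix $y\notin B$, and apply $\ADL$ to the auxiliary game $G_\L(A,\,0^{(k)}{}^\smallfrown B)$. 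A winning strategy for $\pI$ there converts to one in $G_{k\text{-}\Lip}(A,B)$ by playing $\sigma(0^{(k)}{}^\smallfrown s)$ once $\pII$ has finished her $k$ passes; a winning strategy $\tau$ for $\pII$ converts by having $\pII$ pass $k$ times, then follow $\tau$ if the first $k$ outputs were $0^{(k)}$, and otherwise enumerate the fixed $y\notin B$. Your proposed approach for \textit{iii)} would not establish the claim, since $A,B$ are arbitrary subsets of $\Bai$ and no ``local'' determinacy principle is in play.

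The rest of your plan (part \textit{i)}, and the quantifier-unwinding plus the $\AD^\Lip_-\imp\SLO^\Lip$ step via contractions in part \textit{ii)}) matches the paper.
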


\begin{proof}
 The first part is obvious since $\AD$ easily implies that every
  reduction game is 
determined, and the equivalence and the first two implications of part
\textit{ii)} are obvious as well. The third implication of part
\textit{ii)}  can be proved using the trivial observation that 
winning strategies for $\pI$ in any of the games $G_{k\text{-}\Lip}$ induce
contractions,  while the last implication 
follows from  $\Lip \subseteq 
\W$. 
It remains only to prove part \textit{iii)}. Fix $A,B
\subseteq \Bai$ and 
$k \in \omega$. If $B = \Bai$ then $G_{k\text{-}\Lip}(A,B)$ is trivially
determined ($\pII$ has a winning strategy if $A = \Bai$, and $\pI$ has a
winning strategy if $A \neq \Bai$): hence we can assume $B \neq \Bai$ and fix
some $y \notin B$.
Consider the auxiliary game $G=G_\L(A, 0^{(k)}\conc B)$: if $\pI$ has
a winning strategy 
$\sigma$ for $G$, $\pI$ can also win $G_{k\text{-}\Lip}(A,B)$
simply playing $\sigma(0^{(i)})$ for the
first $k$ turns (i.e.\ for $i \leq k$), and then playing
$\sigma(0^{(k)} \conc s)$ if $\pII$ has enumerated a sequence of the
form $\p^{(k)} \conc s$ (for some $s \in {}^{< \omega} \omega$) in
the game $G_{k\text{-}\Lip}(A,B)$, and $0$ otherwise. Conversely, if $\pII$ 
has a winning 
strategy $\tau$ in the game $G$, then
she can also win $G_{k\text{-}\Lip}(A,B)$ by playing $\p$ for the first $k$
rounds, and then 
playing $\tau(s)$, where $s$ is the sequence enumerated by $\pI$ in
$G_{k\text{-}\Lip}(A,B)$, 
if $(\restr{s}{k}) * \tau = 0^{(k)}$, or enumerating $y$ otherwise.
\end{proof}

\begin{theorem}[\BP+\DCR] \label{theoraxioms}
Let $\ax{Ax}$ be one of the axioms $\AD(G_{k\text{-}\Lip})$,
$\AD^\Lip_-$ and $\AD^\Lip$. Then $\ax{Ax}
\iff \SLOW$.
\end{theorem}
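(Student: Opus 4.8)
The plan is to prove the chain of implications
\[
\AD^\Lip \imp \AD^\Lip_- \imp \AD(G_{k\text{-}\Lip})\text{ for some fixed }k \imp \SLOW \imp \AD^\Lip,
\]
and then observe that this closes the loop with the three named axioms, since Lemma \ref{lemmaaxioms}\textit{ii)} already gives the implications among them modulo $\SLOW$. The only genuinely new work is the implication $\SLOW \imp \AD^\Lip$, since the other arrows are either trivial (the first two, by definition) or handled by the already-established $\AD^\Lip_- \imp \SLO^\Lip \imp \SLOW$ of Lemma \ref{lemmaaxioms}\textit{ii)}. So the bulk of the argument is: \emph{assuming $\BP$, $\DCR$ and $\SLOW$, show that every $G_{k\text{-}\Lip}(A,B)$ is determined.}

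First I would recall the standard ``$\SLOW$ plus a regularity hypothesis implies more determinacy'' machinery, which is exactly the content of Andretta's \cite{andrettaequivalence}: under $\BP+\DCR$, $\SLOW$ implies $\ADW$ (semilinear ordering for $\W$ upgrades to full determinacy of the Wadge games), and in fact $\SLOW$ is equivalent to $\ADW$ under these hypotheses. Here is where I would lean on the remark in the paper that ``the most difficult implication involved in such extension was already proved in those papers.'' Granting $\ADW$, i.e.\ that $G_\W(A',B')$ is determined for all $A',B' \subseteq \Bai$, I then need to transfer determinacy from the Wadge games to the games $G_{k\text{-}\Lip}$. For this I would reuse the reduction carried out in the proof of Lemma \ref{lemmaaxioms}\textit{iii)}: given $A,B \subseteq \Bai$ with $B \neq \Bai$ (the case $B = \Bai$ being trivial as noted there) and a fixed $k$, one considers the auxiliary game $G_\L(A, 0^{(k)}\conc B)$, and shows that a winning strategy for either player in that Lipschitz game converts into a winning strategy for the same player in $G_{k\text{-}\Lip}(A,B)$ — precisely the explicit strategy translations ($\pI$ plays $\sigma(0^{(i)})$ for $i \le k$ then follows $\sigma$ on the ``non-pass'' tail; $\pII$ passes $k$ times then follows $\tau$ or diverts to the fixed $y \notin B$) written out in that proof. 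But that proof used $\ADL$ to get determinacy of the auxiliary \emph{Lipschitz} game; now I instead need determinacy of an auxiliary \emph{Wadge} game, so I would replace $G_\L(A,0^{(k)}\conc B)$ by the corresponding $G_\W$-game, or more directly use that $\W$ and $\Lip$ games satisfy $\SLO \Leftrightarrow$ determinacy under $\BP+\DCR$, and adapt the backtracking/pass bookkeeping accordingly. The key point is that ``pass at most $k$ times'' is a mild enough restriction of the Wadge rules that the equivalence survives.

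The step I expect to be the main obstacle is exactly this transfer: getting from determinacy of a full Wadge (or Lipschitz-via-$\ADW$) game back down to determinacy of the constrained game $G_{k\text{-}\Lip}(A,B)$ uniformly in $k$, making sure the strategy conversions are legal (the ``enumerate $y$ otherwise'' clause needs $y \notin B$, hence the reduction to $B \neq \Bai$) and that no choice beyond $\DCR$ creeps in. A secondary subtlety is simply marshalling Lemma \ref{lemmaaxioms}\textit{ii)} correctly: that lemma already supplies $\AD(G_{k\text{-}\Lip}) \imp \AD^\Lip_- \imp \SLO^\Lip \imp \SLOW$ outright (no extra hypotheses), and the equivalence $\AD^\Lip \iff \forall k\, \AD(G_{k\text{-}\Lip})$, so once I have $\SLOW \imp \AD^\Lip$ in hand under $\BP+\DCR$, all three versions of $\ax{Ax}$ collapse to $\SLOW$ and the theorem follows by a short diagram chase. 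I would therefore structure the write-up as: (1) dispose of the easy direction $\ax{Ax} \imp \SLOW$ by citing Lemma \ref{lemmaaxioms}\textit{ii)}; (2) for the converse, invoke the $\BP+\DCR$ upgrade of $\SLOW$ to Wadge determinacy; (3) perform the $k$-$\Lip$ transfer via the auxiliary game as above; (4) conclude $\AD^\Lip$, hence all of $\AD(G_{k\text{-}\Lip})$, $\AD^\Lip_-$, $\AD^\Lip$ are equivalent to $\SLOW$.
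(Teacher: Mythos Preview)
Your proposal is correct in spirit but more roundabout than necessary, and it overlooks that the paper has already done all the work you plan to redo. The paper's proof is two lines: Lemma~\ref{lemmaaxioms} gives the chain $\ADL \imp \ax{Ax} \imp \SLOW$ (part \textit{iii)} supplies $\ADL \imp \AD^\Lip$, part \textit{ii)} supplies the rest down to $\SLOW$), and then one simply cites Andretta's result from \cite{andrettamoreonwadge} that, under $\BP+\DCR$, $\SLOW \imp \ADL$. That closes the loop immediately. You instead propose to go $\SLOW \imp \ADW$ and then transfer $\ADW$ down to determinacy of each $G_{k\text{-}\Lip}(A,B)$ by adapting the auxiliary-game trick --- but that transfer is exactly what Lemma~\ref{lemmaaxioms}\textit{iii)} already packages (with $\ADL$ in place of $\ADW$), so there is no need to reopen it. Your parenthetical ``or more directly use that $\W$ and $\Lip$ games satisfy $\SLO \Leftrightarrow$ determinacy under $\BP+\DCR$'' is in fact the whole proof: once you have $\ADL$, invoke part \textit{iii)} as a black box and you are done.

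Two minor points. First, in your displayed chain you write $\AD^\Lip_- \imp \AD(G_{k\text{-}\Lip})$, but Lemma~\ref{lemmaaxioms}\textit{ii)} gives the implication the other way; this does not affect the final equivalence, but the chain as written is not what the lemma provides. Second, the relevant Andretta citation for $\SLOW \imp \ADL$ is \cite{andrettamoreonwadge} (Proposition~15 and Theorem~18) rather than \cite{andrettaequivalence}; the latter gives $\ADW \iff \ADL$, which would also suffice for your detour through $\ADW$, but is not what the paper invokes.
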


\begin{proof}
  By Lemma \ref{lemmaaxioms} we have $\ADL \imp \ax{Ax} \imp \SLOW$, and since
  under $\BP +\DCR$ we have from \cite[Proposition 15 and Theorem 18]{andrettamoreonwadge}
  that   $\SLOW \imp 
\ADL$, we get the desired equivalence.
\end{proof}

In particular, this theorem implies that
all the results about the $\Lip$-hierarchy obtained in
\cite{mottorosbairereductions} (such as the fact that the structure
induced by $\leq_\Lip$ can be completely determined and is a
well-founded semi-linear order whose antichains have size at most 
two, or the relationship between this hierarchy and the ones induced
by $\leq_\L$ and $\leq_\W$)  hold under any
of the axioms listed above (together with $\BP+\DCR$).

\subsection{$\sf{D}^\W_\xi$-reducibilities}

We now turn our attention to the $\sf{D}^\W_\xi$-hierarchies (for some
fixed nonzero $\xi < \omega_1$). Recall from 
\cite[pp.\ 47-48]{mottorosborelamenability} that there are two operations
$\Sigma^\xi$ and $\Pi^\xi$ such that $\{\Sigma^\xi(A),\Pi^\xi(A)\}$
are the successors of $A$ in the $\sf{D}^\W_\xi$-hierarchy whenever $A
\leq_{\sf{D}^\W_\xi} \neg A$. Here are the definitions: 
\[ \Sigma^\xi(A) = \{x \in \Bai \mid \exists n (\pi_{2n}(x) \in P_n
\wedge \forall i<n(\pi_{2i}(x) \notin P_i) \wedge \pi_{2n+1}(x) \in
A)\}\]
and
\[ \Pi^\xi(A) = \Sigma^\xi(A) \cup R_\xi,\]
where the $P_n$'s are $\bPi^0_{\mu_n}$-complete for an increasing
sequence of ordinals $\seq{\mu_n}{n \in \omega}$ cofinal in $\xi$ and $R_\xi =
\{x\in \Bai \mid \forall n(\pi_{2n}(x) \notin P_n) \}$.
There is a strict relationship between the games $G^\F_\xi$ (in
particular when $\F = \W$) and these
successor operations --- in fact our definition of $G^\F_\xi$ was 
originally motivated by the definitions of $\Sigma^\xi$ and $\Pi^\xi$.

\begin{proposition}\label{propgamessuccessor}
For every $A,B \subseteq \Bai$, the following are equivalent:
\begin{enumerate}[i)]
\item $A \leq_{\sf{D}^\W_\xi} B$;
\item $A \leq_\W \Sigma^\xi(B)$ \emph{and} $A \leq_\W \Pi^\xi(B)$;
\item $A \leq_\W \Sigma^\xi(B)$ via some function $f$ such that ${\rm range}
(f)\cap R_\xi = \emptyset$;
\item $A \leq_\W \Pi^\xi(B)$ via some function $f$ such that ${\rm range}
(f)\cap R_\xi = \emptyset$.
\end{enumerate}
\end{proposition}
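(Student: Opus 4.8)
The plan is to establish the cycle of implications
$(i)\imp(iii)\imp(ii)\imp(i)$ together with the symmetric chain
$(i)\imp(iv)\imp(ii)$, so that all four conditions become equivalent.
The heart of the matter is the passage from a $\sf{D}^\W_\xi$-reduction
to a continuous reduction into $\Sigma^\xi(B)$ (or $\Pi^\xi(B)$) whose
range avoids $R_\xi$, and conversely; the two implications
$(iii)\imp(ii)$ and $(iv)\imp(ii)$ are essentially trivial once one
observes that $\Sigma^\xi(B)\subseteq\Pi^\xi(B)$ and that, away from
$R_\xi$, membership in $\Sigma^\xi(B)$ and in $\Pi^\xi(B)$ agree.

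First I would prove $(i)\imp(iii)$. Suppose $f$ witnesses
$A\leq_{\sf{D}^\W_\xi}B$, so there is a $\bPi^0_{<\xi}$-partition
$\seq{D_k}{k\in\omega}$ of $\Bai$ and continuous functions $f_k$ with
$\restr{f}{D_k}=\restr{f_k}{D_k}$. Using that each $P_n$ is
$\bPi^0_{\mu_n}$-complete and that the $\mu_n$ are cofinal in $\xi$, I
choose an increasing sequence $n_k$ and continuous reductions $g_k$
witnessing $D_k\leq_\W P_{n_k}$, together with reals $y_n\notin P_n$ for
$n\notin\{n_k:k\}$. Then I build a single continuous
$h\colon\Bai\to\Bai$ by specifying its coordinate maps:
$\pi_{2n_k}\circ h=g_k$, $\pi_{2n_k+1}\circ h=f_k$, while for indices
$n$ not of the form $n_k$ I set $\pi_{2n}\circ h$ constantly $y_n$ and
$\pi_{2n+1}\circ h$ arbitrary (say constantly $\vec 0$). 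For $x\in D_k$,
the least $n$ with $\pi_{2n}(h(x))\in P_n$ is exactly $n=n_k$ (since
$g_k(x)\in P_{n_k}$ and $y_n\notin P_n$ otherwise), and then
$\pi_{2n_k+1}(h(x))=f_k(x)=f(x)$, so $h(x)\in\Sigma^\xi(B)$ iff
$f(x)\in B$ iff $x\in A$. Moreover some control row is always
activated, so $h(x)\notin R_\xi$; this gives $(iii)$. Note this is
really the ``strategies to functions'' half of Theorem
\ref{theorgames} read off directly in terms of the operations
$\Sigma^\xi$ and $\Pi^\xi$, and indeed one can alternatively derive
$(i)\imp(iii)$ by translating a winning $\pII$-strategy in
$G^\W_\xi(A,B)$ into a continuous map.

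Next, $(iii)\imp(ii)$: if $f$ reduces $A$ to $\Sigma^\xi(B)$ with
${\rm range}(f)\cap R_\xi=\emptyset$, then since on $\Bai\setminus R_\xi$
we have $x\in\Sigma^\xi(B)\iff x\in\Pi^\xi(B)$ (because
$\Pi^\xi(B)=\Sigma^\xi(B)\cup R_\xi$ and $R_\xi$ is disjoint from the
range), the same $f$ also witnesses $A\leq_\W\Pi^\xi(B)$, giving $(ii)$;
and $(iv)\imp(ii)$ is identical. For $(ii)\imp(i)$, suppose
$g_0$ witnesses $A\leq_\W\Sigma^\xi(B)$ and $g_1$ witnesses
$A\leq_\W\Pi^\xi(B)$. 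I define $f\colon\Bai\to\Bai$ piecewise on the
$\bPi^0_{<\xi}$-partition given by the activation pattern of $g_0$:
on the $\bPi^0_{<\xi}$ set $E=\{x\mid\exists n\,\pi_{2n}(g_0(x))\in P_n\}$
put $f(x)=\pi_{2n+1}(g_0(x))$ with $n$ least such that
$\pi_{2n}(g_0(x))\in P_n$, and on its complement (a $\bPi^0_{<\xi}$ set,
in fact equal to $g_0^{-1}(R_\xi)$) use $g_1$ similarly, noting that on
$g_0^{-1}(R_\xi)$ we have $x\in A\iff g_0(x)\in\Sigma^\xi(B)\cup R_\xi$
so $x\in A$ iff $g_0(x)\in R_\xi$, i.e.\ $x\in A$ automatically there, so
that on this piece we must instead read membership off $g_1$, using
$g_1(x)\in\Pi^\xi(B)\iff x\in A$ and splitting again by whether
$g_1(x)\in R_\xi$. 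After refining the partition finitely many times this
way, each piece carries a continuous function (a coordinate projection
composed with $g_0$ or $g_1$, or a constant) computing $f$, and $f$
reduces $A$ to $B$; each defining set is $\bPi^0_{<\xi}$, so
$f\in\sf{D}^\W_\xi$.

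The main obstacle I anticipate is the bookkeeping in $(ii)\imp(i)$: one
must be careful that the two-sided information (from $\Sigma^\xi$ and
from $\Pi^\xi$) genuinely suffices to decide membership in $A$ even on
the ``overflow'' set $g_0^{-1}(R_\xi)$, and that the resulting partition
remains $\bPi^0_{<\xi}$ rather than merely $\bDelta^0_\xi$ --- this is
exactly why one needs \emph{both} $\Sigma^\xi(B)$ and $\Pi^\xi(B)$ in
$(ii)$ and cannot get away with only one of them without the range
condition. The implications $(i)\imp(iv)$ and $(iv)\imp(ii)$ mirror
$(i)\imp(iii)$ and $(iii)\imp(ii)$ verbatim, using
$\Pi^\xi(B)=\Sigma^\xi(B)\cup R_\xi$ and the fact that the continuous
map $h$ constructed above already avoids $R_\xi$, so there is nothing
new to check there.
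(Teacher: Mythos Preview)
Your arguments for $(i)\Rightarrow(iii)$, $(iii)\Leftrightarrow(iv)$, and $(iii)\Rightarrow(ii)$ are correct and match the paper's proof (there phrased via strategies rather than directly via continuous maps, but this is cosmetic).

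The genuine gap is in $(ii)\Rightarrow(i)$. First a sign slip: on $g_0^{-1}(R_\xi)$ one has $g_0(x)\notin\Sigma^\xi(B)$ (since $R_\xi\cap\Sigma^\xi(B)=\emptyset$), hence $x\notin A$ there, not $x\in A$. More seriously, the partition you describe is \emph{not} a $\bDelta^0_\xi$-partition. Each $F^0_n$ is fine (it lies in $\bPi^0_{\mu_n}\subseteq\bPi^0_{<\xi}$), but the leftover piece
\[
g_0^{-1}(R_\xi)=\bigcap_n\{x\mid\pi_{2n}(g_0(x))\notin P_n\}
\]
is a countable intersection of $\bSigma^0_{<\xi}$-sets, hence genuinely $\bPi^0_\xi$ and not $\bDelta^0_\xi$ in general; the same holds for its intersections with the $F^1_m$. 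No finite number of further refinements of this kind cures the defect: the complexity of the leftover block is already one level too high, so you do not obtain a function in $\sf{D}^\W_\xi$.

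The paper's remedy is to \emph{interleave} $g_0$ and $g_1$ rather than use $g_1$ only as a fallback on the leftover. Concretely, one doubles the control sets by setting $\hat P_{2k}=\hat P_{2k+1}=P_k$ (legitimate by Claim~\ref{claimchangecontrolsets}) and builds a strategy in $\hat G^\W_\xi(A,B)$ putting $\pi_{2k}\circ f_{\sigma^0}$, $\pi_{2k+1}\circ f_{\sigma^0}$, $\pi_{2k}\circ f_{\sigma^1}$, $\pi_{2k+1}\circ f_{\sigma^1}$ on rows $4k,4k+1,4k+2,4k+3$ respectively. The observation you essentially have (modulo the sign error), namely $g_0(x)\in R_\xi\Rightarrow x\notin A\Rightarrow g_1(x)\notin\Pi^\xi(B)\Rightarrow g_1(x)\notin R_\xi$, guarantees that some control row is always activated, so the strategy is legal. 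In partition language, each piece is determined by ``the least activated index among the interleaved control rows is $n$'', and this is an intersection of one $\bPi^0_{\mu_k}$-condition with finitely many $\bSigma^0_{\leq\mu_k}$-conditions, hence $\bDelta^0_\xi$; the crucial point is that one never intersects with the full $\bPi^0_\xi$ set $g_0^{-1}(R_\xi)$.
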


\begin{proof}
  Obviously, \textit{iii)}$\iff$\textit{iv)} since $\Sigma^\xi(B)
  \setminus R_\xi
= \Pi^\xi(B) \setminus R_\xi$ for every set $B \subseteq \Bai$. Moreover,
\textit{iii)} and \textit{iv)} together trivially imply \textit{ii)}, and
\textit{i)} implies \textit{iii)} and \textit{iv)} since, by definition
of $G^\W_\xi$, every winning  strategy for $\pII$ in $G^\W_\xi(A,B)$
can be obviously converted into
a winning strategy for $\pII$ in both $G_\W(A,\Sigma^\xi(B))$ and
$G_\W(A,\Pi^\xi(B))$. To see that
\textit{ii)} implies \textit{i)},
let $\sigma^0$ and $\sigma^1$ be, respectively, winning strategies
for $\pII$ in $G_\W(A,\Sigma^\xi(B))$ and $G_\W(A,\Pi^\xi(B))$. As already
observed in Claim \ref{claimchangecontrolsets}, we can change the sets
$P_n$ in the definition of $G^\W_\xi$ with 
some suitable $\hat{P}_n$'s,  and it will suffice to show that $\pII$ has a
winning strategy
in $\hat{G}^\W_\xi(A,B)$, where $\hat{G}^\W_\xi$ is the game defined using
the  $\hat{P}_n$'s
instead of the $P_n$'s.
Choose for every $n \in \omega$ and $i=0,1$ a strategy $\sigma^i_n
\in \sf{LS}_\W$ representing $\pi_n \circ f_{\sigma^i}$. Then put
$\hat{P}_{2n} = \hat{P}_{2n+1} = P_n$ for 
every $n \in \omega$ and set $\tau = \bigotimes_n \tau_n$, where
$\tau_{4k} =  \sigma^0_{2k}$,   
$\tau_{4k+1}=  \sigma^0_{2k+1}$,
$\tau_{4k+2} =  \sigma^1_{2k}$ and 
$\tau_{4k+3} =  \sigma^1_{2k+1}$ ($k \in \omega$).
Notice that each $\hat{P}_n$ is $\bPi^0_{\hat{\mu}_n}$-complete, where
$\hat{\mu}_{2k+i} = \mu_k$ for $k \in \omega, i = 0,1$ (so that
$\seq{\hat{\mu}_n}{n \in \omega}$ is a sequence of ordinals cofinal in
$\xi$).
We claim that $\tau \in \hat{\sf{LS}}^\W_\xi$. Let $x \in \Bai$. 
 Since the $\sigma^i$'s are \emph{winning} strategies
in the corresponding games, we have that for every real $x$
\[ f_{\sigma^0}(x) \in R_\xi \imp x \notin A \imp f_{\sigma^1}(x)
\notin R_\xi,\] 
thus there must be some $n$ such that either $\pi_{2n} (
f_{\sigma^0}(x)) \in P_n$ 
or else $\pi_{2n} (f_{\sigma^1}(x)) \in P_n$. But this implies
that either 
$\iota_\W(x,\pi_{4n}(x*\tau)) \in \hat{P}_{2n}$ or
$\iota_\W(x,\pi_{4n+2}(x*\tau)) \in 
\hat{P}_{2n+1}$, hence 
$\tau$ is 
legal.  
To finish the proof, let $n$ be
the smallest natural number such that $\iota_\W(x,\pi_{2n}(x * \tau))
\in \hat{P}_n$:  then if
$n=2k+i$ ($k \in \omega, i=0,1$) we clearly have
$\iota_\W(x,\pi_{2n+1}(x * \tau)) = \pi_{2k+1} (f_{\sigma^i}(x))$ and
\[ \pi_{2k+1} (f_{\sigma^i}(x))
\in B \iff f_{\sigma^i}(x) \in B_i \iff x \in A,\]
 where $B_0 = \Sigma^\xi(B)$ and $B_1=\Pi^\xi
(B)$. Therefore 
$\tau$ is a winning strategy for $\pII$ in $\hat{G}^\W_\xi(A,B)$.
\end{proof}

As for the Lipschitz games $G_{k\text{-} \Lip}$, the games $G^\W_\xi$
allow us to introduce new determinacy axioms (one for each $\xi$):
\begin{description}
\item[$\AD^\W_\xi$] For every $A,B \subseteq \Bai$ the game $G^\W_\xi(A,B)$ is
determined.
\end{description}

Clearly $\AD \imp \AD^\W_\xi \imp \SLO^{\sf{D}^\W_\xi}$,
but Proposition
\ref{propgamessuccessor} allows us to prove the following stronger
corollary.

\begin{corollary}\label{cordeterminacy}
  $\ADW$ implies $\AD^\W_\xi$.
\end{corollary}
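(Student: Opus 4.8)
The plan is to fix arbitrary $A, B \subseteq \Bai$ and split into cases according to the outcome of the two Wadge games $G_\W(A, \Sigma^\xi(B))$ and $G_\W(A, \Pi^\xi(B))$, both of which are determined by $\ADW$. If $\pII$ has a winning strategy in each of them, then by the very definition of the Wadge game we get $A \leq_\W \Sigma^\xi(B)$ and $A \leq_\W \Pi^\xi(B)$, so Proposition~\ref{propgamessuccessor} gives $A \leq_{\sf{D}^\W_\xi} B$, and then part i) of Theorem~\ref{theorgames} (applied with $\F = \W$) produces a $\tau \in \sf{LS}^\W_\xi$ with $A = f_\tau^{-1}(B)$, i.e.\ a winning strategy for $\pII$ in $G^\W_\xi(A,B)$. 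Hence the only case that requires work is the one in which, again by $\ADW$, $\pI$ has a winning strategy $\sigma$ in one of the two games; write $G_\W(A,C)$ for this game, where $C$ is $\Sigma^\xi(B)$ or $\Pi^\xi(B)$. In this case I would build a winning strategy for $\pI$ in $G^\W_\xi(A,B)$ by running $G_\W(A,C)$ as an auxiliary game, very much in the spirit of the auxiliary-game simulations in the proofs of Theorems~\ref{theorgames} and~\ref{theorGamma}.

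The observation that makes this simulation go through is the following. For any pair of legal plays $(x,y) \in R^\W_\xi$ consider the ``diagonal'' real $w = \bigotimes_n \iota_\W(x, \pi_n(y))$, so that $\pi_n(w) = \iota_\W(x, \pi_n(y))$ for every $n$. Since $y$ is legal, at least one even row is activated, so $w \notin R_\xi$ and the smallest $n$ with $\pi_{2n}(w) \in P_n$ is exactly the index $i$ appearing in the definition of $\iota^\W_\xi(x,y)$; unwinding the definitions of $\Sigma^\xi$ and $\Pi^\xi$ then yields
\[ w \in \Sigma^\xi(B) \iff w \in \Pi^\xi(B) \iff \iota^\W_\xi(x,y) \in B. \]
Accordingly, $\pI$'s strategy in $G^\W_\xi(A,B)$ will be to maintain a shadow run of $G_\W(A,C)$ in which he plays according to $\sigma$ and the shadow $\pII$ is fed the real $w$ digit by digit: each time a move of $\pII$ in the main game decides a new digit of $w$, the shadow $\pII$ plays that digit, and at all other turns the shadow $\pII$ passes (a legal move for $\pII$ in $G_\W$); $\pI$ then plays in the main game whatever $\sigma$ dictates in the shadow run.

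To finish, one checks that this strategy wins. If $\pII$'s play in the main game is not legal, then $\pII$ loses outright by the rules of $G^\W_\xi$ (here $X = \Bai$, so $\pI$'s play is automatically legal and the losing clause for $\pI$ never applies). If $\pII$'s play is legal, then every row $\pi_n(y)$ has infinitely many non-pass entries, so every digit of $w$ is eventually decided and the shadow $\pII$ ends up enumerating exactly $w$; since $\sigma$ is winning for $\pI$ in $G_\W(A,C)$, the shadow run gives $x \in A \iff w \notin C$, which by the displayed equivalence means $x \in A \iff \iota^\W_\xi(x,y) \notin B$, i.e.\ $\pI$ wins $G^\W_\xi(A,B)$. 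The one genuinely delicate point --- and the main obstacle in the argument --- is the bookkeeping of the shadow run: verifying that the greedy ``feed the next decided digit, otherwise pass'' schedule really does enumerate $w$ in the correct order, and that $\pI$'s move at each turn depends only on the moves already played by $\pII$. This is exactly the sort of routine-but-careful check already carried out for the simulations in Section~\ref{sectionplayable}, and it is where the freedom of $\pII$ to pass in $G_\W$ is essential. Everything else is immediate from Proposition~\ref{propgamessuccessor} and the properties of $G^\W_\xi$ established earlier.
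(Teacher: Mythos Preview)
Your proposal is correct and follows essentially the same route as the paper: both split into cases on the outcome of $G_\W(A,\Sigma^\xi(B))$ and $G_\W(A,\Pi^\xi(B))$, invoke Proposition~\ref{propgamessuccessor} when $\pII$ wins both, and convert a winning strategy for $\pI$ in one of them into a winning strategy for $\pI$ in $G^\W_\xi(A,B)$. The paper simply calls the latter conversion ``the obvious way'' without spelling it out; your diagonal real $w=\bigotimes_n\iota_\W(x,\pi_n(y))$ together with the greedy feeding schedule is exactly a correct way to make that obvious conversion precise (and your observation that $w\notin R_\xi$ forces $w\in\Sigma^\xi(B)\iff w\in\Pi^\xi(B)\iff\iota^\W_\xi(x,y)\in B$ is the key point).
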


\begin{proof}
  Let $A$ and $B$ be two subsets of $\Bai$. By $\ADW$, the games
$G_\W(A,\Sigma^\xi(B))$ and $G_\W(A,\Pi^\xi(B))$ are determined. If
$\pI$ has a winning strategy in one of these two games, then he can
convert this
strategy into a winning strategy for $\pI$ in $G^\W_\xi(A,B)$ in  the obvious
way, hence we can assume that 
$\pII$ wins both the games. But in this case $\pII$ has a winning
strategy in $G^\W_\xi(A,B)$ by Proposition \ref{propgamessuccessor}, hence
we are done.
\end{proof}

There is a natural question arising from the previous corollary, namely:

\begin{question}\label{questionconverse}
 Assume $\BP + \DCR$. Given a  countable ordinal $\xi>1$, does the
 converse to Corollary \ref{cordeterminacy} hold? 
\end{question}

This question was answered positively for $\xi =2$ by Andretta in his
\cite{andrettamoreonwadge}, where it is shown that in fact
$\SLO^{\sf{D}_2}$ (which is a direct consequence of $\AD^\W_2$)
implies $\AD^\W$ if we assume $\BP+\DCR$. The proof is carried out
with an induction on the $\sf{D}_2$-hierarchy of degrees (which can be
determined under $\SLO^{\sf{D}_2} + \BP + \DCR$), but even if we will
show that for any $\xi$  the axioms $\AD^\W_\xi + \BP + \DCR$
are indeed strong enough to determine the $\sf{D}_\xi$-hierarchy of
degrees as well (see Theorem \ref{theorstruc} below), it seems that
the argument used by Andretta does not generalize in a straightforward
way to higher levels. Therefore Question \ref{questionconverse} is
still completely open for $\xi \geq 3$. 

We will now prove that, as announced in the previous paragraph, the
axiom $\AD^\W_\xi$ is strong 
enough\footnote{All the following results can also be proved assuming
  that for every $A,B \subseteq \Bai$ either $A \leq_{\sf{D}^\W_\xi} B$
  or $\neg B \leq_{\sf{c}} A$, which by Theorem
  \ref{theorgames} is a (seemingly weaker) consequence of
  $\AD^\W_\xi$.} to determine (together with  
$\BP$ and $\DCR$) the degree-structure
induced by  $\sf{D}^\W_\xi$, or even by any  Borel-amenable set of reductions
$\F \supseteq \sf{D}^\W_\xi$ (see \cite{mottorosborelamenability} for
a general introduction to such degree-structures). This shows that to study
a Borel-amenable reducibility $\F$ we just need to assume
an axiom which is ``of the same level'' of $\F$, rather than the seemingly
stronger $\SLO^\W$. 

Toward our goal, we will
simply 
modify the arguments presented in \cite{mottorosborelamenability}
whenever an axiom stronger than $\AD^\W_\xi$ was required. We start by
proving a lemma 
(analogous to  \cite[Lemma 2.1]{mottorosborelamenability}) under the
new axiomatization. 

\begin{lemma}\label{lemmabasic2}
  Assume $\AD^\W_\xi$. For every set of reductions $\F \supseteq
\sf{D}^\W_\xi$ and every $A,B \subseteq \Bai$ we have $A <_\F B \imp A<_\L B$.
\end{lemma}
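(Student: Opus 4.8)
The plan is to derive the statement from the single consequence of $\AD^\W_\xi$ recorded in the footnote above, namely the dichotomy $(\star)$: for all $C,D\subseteq\Bai$, either $C\leq_{\sf{D}^\W_\xi}D$ or $\neg D\leq_\sf{c}C$. Indeed, by $\AD^\W_\xi$ the game $G^\W_\xi(C,D)$ is determined; if $\pII$ has a winning strategy we get $C\leq_{\sf{D}^\W_\xi}D$ by Theorem \ref{theorgames}(i), while if $\pI$ has a winning strategy we get $\neg D\leq_\sf{c}C$ by Theorem \ref{theorgames}(ii). Besides $(\star)$ I will only use the inclusions $\sf{c}\subseteq\L=\Lip(1)\subseteq\W\subseteq\sf{D}^\W_\xi\subseteq\F$, the transitivity of $\leq_\F$ (since $\F$, being a set of reductions, is closed under composition), and the trivial remark that any $f$ reducing a set $C$ to a set $D$ also reduces $\neg C$ to $\neg D$ (so all the reducibilities involved are self-dual under complementation).

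Now fix $A,B\subseteq\Bai$ with $A<_\F B$, i.e.\ $A\leq_\F B$ and $B\not\leq_\F A$; I must show $A\leq_\L B$ and $B\not\leq_\L A$. The second is immediate: since $\L\subseteq\F$, a $\leq_\L$-reduction of $B$ to $A$ would be a $\leq_\F$-reduction of $B$ to $A$, contradicting $B\not\leq_\F A$. For the first, I apply $(\star)$ to the \emph{crossed} pair $(\neg B,A)$. In the first alternative we get $\neg B\leq_{\sf{D}^\W_\xi}A$, hence $\neg B\leq_\F A$; composing with $A\leq_\F B$ gives $\neg B\leq_\F B$, and applying the same reduction to complements gives $B\leq_\F\neg B$, so $B\leq_\F\neg B\leq_\F A$, i.e.\ $B\leq_\F A$, contradicting $B\not\leq_\F A$. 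Hence only the second alternative of $(\star)$ can hold, namely $\neg A\leq_\sf{c}\neg B$; the contraction witnessing this also witnesses $A\leq_\sf{c}B$, and since $\sf{c}\subseteq\L$ we conclude $A\leq_\L B$. Together with $B\not\leq_\L A$ this gives $A<_\L B$.

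The one point requiring care --- and the only place where the argument departs from that of \cite[Lemma 2.1]{mottorosborelamenability} --- is that we must \emph{not} invoke the full Lipschitz determinacy $\SLO^\L$ (equivalently $\ADL$), which is a priori strictly stronger than $\AD^\W_\xi$ for $\xi\geq 3$ (see Question \ref{questionconverse}); instead we extract only the dichotomy $(\star)$ from $\AD^\W_\xi$ via the games $G^\W_\xi$ of Section \ref{sectionplayable}. It is essential to feed $(\star)$ the pair $(\neg B,A)$ rather than $(A,B)$ --- the latter would only return $A\leq_{\sf{D}^\W_\xi}B$, which is too weak --- and to notice that, after composing with $A\leq_\F B$, it is precisely the ``$\sf{D}^\W_\xi$'' alternative of $(\star)$ that clashes with $B\not\leq_\F A$. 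Beyond this I expect no obstacle: the remainder is routine bookkeeping with complements and composition.
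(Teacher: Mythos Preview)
Your proof is correct and follows essentially the same route as the paper: both apply the determinacy of $G^\W_\xi(\neg B,A)$, rule out a $\pII$-win by showing it would force $B\leq_\F A$, and extract $A\leq_\L B$ from the $\pI$-win via Theorem~\ref{theorgames}(ii). The only cosmetic difference is that the paper packages the elimination of the $\pII$-alternative as the implication $A<_\F B\Rightarrow A<_\F\neg B$ (invoking $\SLO^\F$, itself a consequence of $\AD^\W_\xi$), whereas you unfold that same step directly via composition and complements.
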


\begin{proof}
  Since $\AD^\W_\xi \imp \SLO^{\sf{D}^\W_\xi} \imp \SLO^\F$, from $A <_\F B$ we
  have $A <_\F \neg B$. 
But then $\pII$ cannot win $G^\W_\xi(\neg B,A)$ (if this would happen, then
$\neg B \leq_{\sf{D}^\W_\xi} A$ and hence also $\neg B \leq_\F A$).
 Therefore by $\AD^\W_\xi$ we have that
$\pI$ has a winning strategy in the same game, and hence $A \leq_\L B$
by Theorem 
\ref{theorgames}. Moreover $B \nleq_\L A$ since otherwise $B \leq_\F A$
(here we use the fact that $\L \subseteq \sf{D}^\W_\xi \subseteq \F$), and thus
$A <_\L B$.
\end{proof}

\begin{lemma}\label{lemmawellfounded}
  Assume $\AD^\W_\xi+\BP+\DCR$. For every set of reductions $\F
\supseteq \sf{D}^\W_\xi$ the relation $\leq_\F$ is well-founded.
\end{lemma}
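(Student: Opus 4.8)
The plan is to derive well-foundedness of $\leq_\F$ from well-foundedness of $\leq_\L$, which is in turn a consequence of the determinacy of the Lipschitz game together with Martin's classical argument. More precisely, I would first recall that under $\AD^\W_\xi + \BP + \DCR$ the relation $\leq_\L$ is well-founded: this is the usual Martin–Monk style argument, which uses $\DCR$ to extract a hypothetical infinite descending sequence and then derives a contradiction by a flip-flop diagonalization; the only input one needs is that the relevant Lipschitz games are determined, and $\AD^\W_\xi$ suffices for this since, by part \textit{ii)} of Theorem \ref{theorgames}, whenever $\pII$ fails to win $G^\W_\xi(B,A)$ then $\pI$ wins it, hence $\pI$ wins $G_\L(B,A)$, so $\SLO^\L$ holds, and in fact the full determinacy of each $G_\L(A,B)$ needed for the Martin–Monk argument follows in the same way. (Here one uses that $\BP$ is needed to run the Martin–Monk argument, exactly as in \cite{andrettamoreonwadge}.)

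\textbf{Reduction to the Lipschitz case.} Granting that $\leq_\L$ is well-founded, suppose toward a contradiction that $\leq_\F$ is ill-founded, so by $\DCR$ there is a sequence $\seq{A_n}{n \in \omega}$ of subsets of $\Bai$ with $A_{n+1} <_\F A_n$ for every $n$. Since $\F \supseteq \sf{D}^\W_\xi \supseteq \L$ and $\AD^\W_\xi \imp \SLO^{\sf{D}^\W_\xi} \imp \SLO^\F$, Lemma \ref{lemmabasic2} applies to each pair $(A_{n+1}, A_n)$ and yields $A_{n+1} <_\L A_n$ for every $n$. But this is an infinite $<_\L$-descending sequence, contradicting well-foundedness of $\leq_\L$. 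Hence $\leq_\F$ is well-founded.

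\textbf{Main obstacle.} The delicate point is establishing well-foundedness of $\leq_\L$ under the weaker hypothesis $\AD^\W_\xi$ in place of $\SLO^\W$ (or full $\AD$): one must check that $\AD^\W_\xi$ really does give enough instances of Lipschitz determinacy to run the Martin–Monk argument. The key observation is that the only Lipschitz games entering that argument are of the form $G_\L(C, D)$ where $C, D$ are obtained from the $A_n$'s by simple continuous manipulations, and for each such game part \textit{ii)} of Theorem \ref{theorgames} reduces the question ``does $\pI$ win $G_\L(C,D)$?'' to ``does $\pI$ win $G^\W_\xi(C,D)$?'', while the hypothesis directly gives determinacy of $G^\W_\xi(C,D)$; combined with the trivial fact that a winning strategy for $\pII$ in $G_\L$ is also one in $G_\W \supseteq G^\W_\xi$-simulations, one gets that each relevant $G_\L(C,D)$ is determined. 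Everything else—the use of $\BP$ to diagonalize, the use of $\DCR$ to build and manipulate the descending sequence—is verbatim as in \cite[Theorem 18]{andrettamoreonwadge} and \cite{mottorosbairereductions}, and I would simply cite it rather than reproduce it.
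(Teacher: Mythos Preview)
Your two–step plan has a genuine gap in the first step. You claim that $\AD^\W_\xi$ yields $\SLO^\L$ and ``the full determinacy of each $G_\L(A,B)$ needed for the Martin--Monk argument'', but the justification you give does not work. From part~\textit{ii)} of Theorem~\ref{theorgames} you only get: if $\pII$ does \emph{not} win $G^\W_\xi(A,B)$, then $\pI$ wins $G_\L(A,B)$. For an \emph{arbitrary} $<_\L$-descending chain $A_0 >_\L A_1 >_\L \dotsc$ the information available is $A_n \nleq_\L A_{n+1}$, and since $\L \subsetneq \sf{D}^\W_\xi$ (for $\xi \geq 2$) this does \emph{not} imply $A_n \nleq_{\sf{D}^\W_\xi} A_{n+1}$, i.e.\ it does not rule out that $\pII$ wins $G^\W_\xi(A_n,A_{n+1})$. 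So you have no way to produce the winning strategies for $\pI$ in $G_\L(A_n,A_{n+1})$ and $G_\L(\neg A_n,A_{n+1})$ that Martin--Monk requires. In particular, your derivation of $\SLO^\L$ is incorrect: $\pII$ winning $G^\W_\xi(B,A)$ only gives $B \leq_{\sf{D}^\W_\xi} A$, not $B \leq_\L A$.

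Your ``Main obstacle'' paragraph almost identifies the fix, but then the two-step structure collapses. The point is that the $<_\L$-chain you actually care about is not arbitrary: it comes from a $<_\F$-chain, and it is precisely the $\F$-information $A_n \nleq_\F A_{n+1}$ (together with $\neg A_n \nleq_\F A_{n+1}$, obtained from $\SLO^\F$) that lets you conclude, via $\sf{D}^\W_\xi \subseteq \F$, that $\pII$ does not win $G^\W_\xi(A_n,A_{n+1})$ and $G^\W_\xi(\neg A_n,A_{n+1})$, whence $\pI$ wins both $G_\L$-games by $\AD^\W_\xi$ and Theorem~\ref{theorgames}. Once you use the $\F$-information in this way you are no longer proving abstract well-foundedness of $\leq_\L$ and then reducing; you are running Martin--Monk directly on the $<_\F$-chain. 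That is exactly the paper's argument, and the detour through Lemma~\ref{lemmabasic2} becomes superfluous.
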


\begin{proof}
  It clearly suffices to prove that there is no $\leq_\F$ descending
  chain --- the equivalence between this statement and well-foundness can
  be obtained in the usual way using the existence of a surjection $j
  \colon \Bai \onto \F$ (see 
  \cite[Corollary 2.2]{andrettaslo}). 
So
assume towards a contradiction that $A_0 >_\F A_1 >_\F \dotsc$ is such
a chain. We claim that $A_{n+1} \leq_{\sf{c}} A_n$ and $A_{n+1}
\leq_{\sf{c}} \neg A_n$ for every $n \in \omega$, i.e.\ that $\pI$ wins
both $G_\L(A_n,A_{n+1})$ and $G_\L(\neg A_n,A_{n+1})$: applying
then the classic Martin-Monk argument to these winning strategies, we
can construct the 
flip-set which contradicts $\BP$, finishing our proof.
First note that for every $n \in \omega$, we have
$A_{n+1} <_\F \neg A_n$ by $\SLO^\F$ (which follows from $\AD^\W_\xi$). 
From this fact one can conclude, arguing as in Lemma \ref{lemmabasic2}, 
that $\pII$ cannot win neither $G^\W_\xi(A_n, A_{n+1})$ nor
$G^\W_\xi(\neg A_n,A_{n+1})$: but then $\pI$ wins both games by $\AD^\W_\xi$,
and hence $A_{n+1} \leq_{\sf{c}} A_n,\neg A_n$ by Theorem \ref{theorgames}.
\end{proof}


\begin{theorem}\label{theorstruc}
  Assume $\AD^\W_\xi+\BP+\DCR$, and let $\F \supseteq \sf{D}^\W_\xi$
  be any Borel-amenable set of 
reductions. Then the degree-structure
induced by $\leq_\F$ is completely determined and looks like the
Wadge one.
\end{theorem}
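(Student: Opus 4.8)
The plan is to re-run, under the weaker hypothesis $\AD^\W_\xi$, the analysis of Borel-amenable degree-structures carried out in \cite{mottorosborelamenability}, where it was performed assuming (in effect) $\SLOW$ together with $\BP + \DCR$. The guiding observation is that at every step of that analysis one only ever needs to compare sets that are already $\leq_\F$-comparable, or to decide an instance of $\leq_\F$ --- equivalently, modulo Proposition~\ref{propgamessuccessor}, of $\leq_{\sf{D}^\W_\xi}$ --- and both tasks are handled by $\AD^\W_\xi$: since $\sf{D}^\W_\xi \subseteq \F$ we have $\AD^\W_\xi \imp \SLO^{\sf{D}^\W_\xi} \imp \SLO^\F$, so any two subsets of $\Bai$ are $\leq_\F$-comparable; and whenever $\pII$ fails to win a game $G^\W_\xi(A,B)$, by $\AD^\W_\xi$ player $\pI$ wins it, whence $B \leq_{\sf{c}} \neg A$ (so in particular $A \leq_\L B$) by Theorem~\ref{theorgames}. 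These are exactly the mechanisms already exploited in the proofs of Lemmas~\ref{lemmabasic2} and~\ref{lemmawellfounded}. Since $\ADW \imp \AD^\W_\xi$ by Corollary~\ref{cordeterminacy}, the present theorem in particular recovers, under $\ADW + \BP + \DCR$, the $\F$-version of the structure theorem of \cite{mottorosborelamenability}.

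Concretely, I would isolate the ingredients of that structure theorem and check each against the new axiomatisation. \emph{(a) Comparability and the self-dual/non-self-dual dichotomy}: $\SLO^\F$, which follows from $\AD^\W_\xi$ as above. \emph{(b) Well-foundedness of $\leq_\F$}: this is precisely Lemma~\ref{lemmawellfounded}. \emph{(c) Antichains have size at most two}, and two $\leq_\F$-incomparable sets $A,B$ satisfy $B \equiv_\F \neg A$: here one runs the Martin--Monk flip-set argument against $\BP$ on winning strategies for $\pI$ in the relevant games, exactly as inside the proof of Lemma~\ref{lemmawellfounded}, the needed $\pI$-strategies being supplied by $\AD^\W_\xi$ and turned into $\leq_\L$-reductions by Theorem~\ref{theorgames}. \emph{(d) Successor degrees}: by the recalled properties of $\Sigma^\xi$ and $\Pi^\xi$ together with Proposition~\ref{propgamessuccessor}, the sets $\Sigma^\xi(A)$ and $\Pi^\xi(A)$ realise the two immediate successors of a self-dual $A$ first in the $\sf{D}^\W_\xi$-order and then, following \cite{mottorosborelamenability}, in the $\F$-order (here one uses $\sf{D}^\W_\xi \subseteq \F$, Lemma~\ref{lemmabasic2}, and Borel-amenability of $\F$ to exclude collapses); Lemma~\ref{lemmabasic2} also forbids any set from lying strictly $\leq_\F$-between $A$ and $\Sigma^\xi(A)$ or $\Pi^\xi(A)$, since such a set would have to lie strictly $\leq_\L$-between them, which is impossible.

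The main obstacle is \emph{(e)}, the analysis of the limit levels --- deciding which limit degrees are self-dual and which split into non-self-dual pairs, according to the cofinality of the level. In \cite{mottorosborelamenability} this rests on a ``cofinal join'' construction and on boundedness arguments that, as originally presented, invoked comparisons with arbitrary Wadge degrees under $\SLOW$. I would rework these using only $\AD^\W_\xi$: for a countable $\leq_\F$-cofinal family $\langle A_n \mid n \in \omega \rangle$ below a limit degree, Borel-amenability of $\F$ is exactly what guarantees that the obvious countable join is $\leq_\F$-above each $A_n$ and stays inside the segment under consideration, while its minimality among $\F$-upper bounds, and whether it is self-dual, are established by playing $G^\W_\xi(A,B)$ against hypothetical strategies for $\pII$, invoking $\AD^\W_\xi$, and extracting from a win for $\pI$ a $\leq_\L$-reduction in the wrong direction, thereby contradicting $\SLO^\F$ together with well-foundedness. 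The case of uncountable cofinality is handled by a separate, Van~Wesep-style argument as in \cite{mottorosborelamenability}, which I would verify uses only $\DCR + \BP$ and comparisons between $\F$-comparable sets, hence only $\AD^\W_\xi$. Finally, I would recheck the bookkeeping of all the transfinite recursions involved: each is of countable length or is indexed by reals (via a surjection $\Bai \onto \F$, as in \cite[Corollary~2.2]{andrettaslo}), so that $\DCR$ together with $\ACOR$ suffices and no appeal to the full $\AC$ is needed --- routine, but the step demanding the most care.
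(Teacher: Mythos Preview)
Your proposal is correct and follows essentially the same route as the paper: derive $\SLO^\F$ from $\AD^\W_\xi$, invoke Lemma~\ref{lemmawellfounded} for well-foundedness, and then rerun the structural analysis of \cite{mottorosborelamenability} using Theorem~\ref{theorgames} to extract $\leq_\L$-reductions from $\pI$-winning strategies wherever the original arguments appealed to stronger determinacy. The paper's write-up is much terser than yours---it simply records that Theorems~3.1 and~4.6 of \cite{mottorosborelamenability} go through under the new axiomatisation (singling out only part~\textit{vii)} of Theorem~3.1 for an explicit game argument, which corresponds to your item~(c)), and then observes that Theorem~5.3 of \cite{mottorosborelamenability} already follows from $\BP$ alone.

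That last observation is the one place where your plan diverges in emphasis: you anticipate substantial work at the limit levels (your item~(e)) and sketch a reworking via $\AD^\W_\xi$, whereas the paper's point is precisely that no adaptation is needed there---the self-dual/non-self-dual alternation at limit stages is a consequence of $\BP$ (and $\DCR$) alone, independently of any determinacy assumption. So your proposed rework is not wrong, just unnecessary; the paper's proof is shorter because it recognises this.
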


\begin{proof}
  Since $\AD^\W_\xi \imp \SLO^\F$ and $\leq_\F$ is well-founded by Lemma
\ref{lemmawellfounded}, we have that Theorem  3.1 
and Theorem 4.6
of
\cite{mottorosborelamenability} are 
provable under our new axiomatization (for part
\textit{vii)} of Theorem 3.1, assume $B \equiv_\F A \nleq_\F
\neg A$: since $\pII$ cannot win neither $G^\W_\xi(B,\neg A)$ nor
$G^\W_\xi(\neg A,B)$, we have $A
\leq_\L B$ and $B \leq_\L A$ by Theorem \ref{theorgames}). 
Moreover,  \cite[Theorem 5.3]{mottorosborelamenability} follows from
$\BP$ alone, hence we are done.
\end{proof}

Finally, notice that we can also reprove Theorem 4.7 of
\cite{mottorosborelamenability} 
in this new 
context using Lemma \ref{lemmabasic2} instead of 
\cite[Lemma 2.1]{mottorosborelamenability}: therefore under
$\AD^\W_\xi+\BP+\DCR$ we have that for every pair $\F,\G \supseteq
\sf{D}^\W_\xi$ of Borel-amenable sets of reductions, $\F$ is
equivalent to  
(i.e.\ induces the same hierarchy of degrees as) $\G$ just in case they
have the same characteristic set, that is just in case
\[ \Delta_\F = \{ D \subseteq \Bai \mid D \leq_\F \bN_{\langle 0 \rangle}\} =
\{ D \subseteq \Bai \mid D \leq_\G \bN_{\langle 0 \rangle}\}  = \Delta_\G. \]

\section{Non adequate playable set of functions}\label{sectionsmallness} 

This final section is devoted to a technical refinement of the notion
of being adequate for a 
certain
set of functions in relationship to the possibility of representing 
such set by means of
reduction games (using the ideas coming from Sections \ref{sectionplayable} and
\ref{sectionGamma}). 

We start with a significative example. As observed after Definition
\ref{defadequate}, 
the class of all Lipschitz functions $\Lip$ is not adequate (being not
$\p$-closed), but still
it is possible 
(and useful) to consider e.g.\ classes of the form 
$\sf{D}^\Lip_\xi$, which are proper subsets of $\sf{D}^\W_\xi$ (see
\cite[pp.\ 45-46]{mottorosborelamenability}). 
Roughly speaking, in order to define the game $G^\Lip_\xi$ (whose
legal strategies 
for $\pII$ will induce the functions in $\sf{D}^\Lip_\xi$), it is enough to
modify
the algorithm\footnote{Just forbidding $\pII$ to pass (that is 
  making $\pII$  always play a natural number on some of her rows)
  does not give 
  the desired result, because legal strategies for $\pII$ in such
   game would induce functions uniformly continuous (rather than
   Lipschitz) on a definable 
  partition and these two sets of functions are distinct by \cite[pp.\
  45-46]{mottorosborelamenability}.} that $\pII$ must follow to fill
in the $\omega$-many rows 
of her table in the game $G^\F_\xi$: in fact, in the game $G^\Lip_\xi$, 
$\pII$ will have to \emph{simultaneously} play  a
new natural number on 
a certain \emph{finite set} of rows at each of her turns. More
precisely: for $0  \neq k  \in \omega$ let
$\seq{s^k_n}{n \in \omega}$ be 
an enumeration without repetitions of ${}^k \omega$, and let
$\seq{\mu_n}{n \in \omega}$ and $\seq{P_n}{n \in \omega}$ be chosen
as in Subsection \ref{sectionpiecewise}. Then define $G^\Lip_\xi = (X,
M^\Lip_\xi, R^\Lip_\xi, 
\iota^\Lip_\xi)$ by: 

- $M^\Lip_\xi = \emptyset$;

- for $y \in \Bai$, $k \in \omega$ and $i = 0,1$ define $y_{2k+i} =
\seq{s^{2n+2}_{y(n)}(2k+i)}{n \geq k}$;

- $R^\Lip_\xi = \{ (x,y) \in \Bai \times \Bai \mid \exists n
(y_{2n} \in P_n) \}$;

- $\iota^\Lip_\xi \colon R^\Lip_\xi \to \Bai \colon (x,y) \mapsto
y_{2n+1}$ where $n$ is smallest such that $y_{2n} \in P_n$.

As for the games $G^\F_\xi$, a strategy $\tau = \bigotimes'_n \tau_n$
for $\pII$  in
$G^\Lip_\xi$ can be constructed
from a sequence $\langle \tau_n\mid n \in \omega \rangle$ of
strategies for $\pII$ in $G_{k_n\text{-}\Lip}$, where $k_n$ is the
unique natural number such that either $n=2k_n$ or $n=2k_n+1$: in fact
it is enough to define $\tau(s) = m$, where $m$ is such that
$\seq{\tau_n(s)}{n < 2 \leng(s)} = s^{2 \leng(s)}_m$, and it is easy to
check that $\tau \in \sf{LS}^\Lip_\xi$ 
just in case for every $x \in \Bai$ there is some $n \in \omega$
such that $\iota_{k_n\text{-}\Lip}(x * \tau_{2n}) \in P_n$.
Conversely, given a strategy $\tau \in \sf{LS}^\Lip_\xi$ one can 
construct the strategies
$\pi'_{2k+i}(\tau) \in \sf{LS}_{k\text{-}\Lip}$ (for $i=0,1$) by
letting $\pi'_{2k+1}(\tau)(s) = \p$ is $\leng(s) < k$ and
$\pi'_{2k+1}(\tau)(s) = 
s^{2 \leng(s)}_{\tau(s)}(2k+i)$ otherwise.

\begin{theorem}\label{theorGLipxi}
 For every $X,A,B \subseteq \Bai$ and every $f \colon X \to \Bai$ we
 have that:
\begin{enumerate}[i)]
\item $f \in \sf{D}^\Lip_\xi$ if
and only if there is some $\tau \in \sf{LS}^\Lip_\xi$
such that $f=f_\tau$;

\item if $\pI$ has a winning strategy in $G^\Lip_\xi(A,B)$, then $\pI$
  has also a winning strategy in $G_\L(A,B)$. 
\end{enumerate}
\end{theorem}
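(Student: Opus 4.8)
The plan is to mirror the proof of Theorem~\ref{theorgames}, exploiting the fact that the game $G^\Lip_\xi$ is, by design, nothing but $G^\W_\xi$ with the continuous rows replaced by Lipschitz-with-constant-$2^k$ rows, and with the bookkeeping (the enumerations $\seq{s^k_n}{n\in\omega}$) engineered precisely so that a single natural number played by $\pII$ simultaneously advances all the relevant finite pieces of her $\omega$-many rows. So I would first record explicitly the two translation facts already sketched just before the statement: (a) from a sequence $\seq{\tau_n}{n\in\omega}$ of legal strategies for $\pII$ in $G_{k_n\text{-}\Lip}$ one builds $\tau=\bigotimes'_n\tau_n$, and $\tau\in\sf{LS}^\Lip_\xi$ iff for every $x$ there is $n$ with $\iota_{k_n\text{-}\Lip}(x*\tau_{2n})\in P_n$; (b) conversely, from $\tau\in\sf{LS}^\Lip_\xi$ one extracts $\pi'_{2k+i}(\tau)\in\sf{LS}_{k\text{-}\Lip}$, and these operations ``commute'' with $\bigotimes'_n$ in the sense that $\iota_{k\text{-}\Lip}(x*\pi'_{2k}(\tau))$ and $y_{2k+1}$ are unchanged under passing through the decomposition. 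Here the role of Proposition~\ref{theorLipstrategies} is essential: it is exactly what guarantees that the functions carried by the odd rows range over \emph{all} of $\Lip(2^k)$ (hence, taking unions over $k$, over $\Lip$), and that the control rows can realize any continuous reduction into a $P_n$ — note a control row is a Lipschitz row too, and since every $\bPi^0_{\mu_n}$-complete set is Wadge-above any $\bPi^0_{<\xi}$ set by a \emph{Lipschitz} map after a shift, one does not even need genuine continuity there.

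For part~\textit{i)}, the forward direction: given $f\in\sf{D}^\Lip_\xi$, fix a $\bPi^0_{<\xi}$-partition $\seq{D_k\cap X}{k\in\omega}$ of $X$ and Lipschitz functions $f_k\colon X\to\Bai$ with $\restr{f}{D_k}=\restr{f_k}{D_k}$, say $f_k\in\Lip(2^{m_k})$. As in the proof of Theorem~\ref{theorgames} and Claim~\ref{claimchangecontrolsets}, using that the $P_n$'s (equivalently, suitable $\hat P_n$'s) are cofinally complete, choose an increasing $\seq{n_k}{k\in\omega}$ so that $\pII$ has a winning strategy $\sigma_k$ in $G_\W(D_k,P_{n_k})$ — and replace it by a Lipschitz one if needed for the control row — pick reals $y_n\notin P_n$, and set $\tau_{2n}=\sigma_k$, $\tau_{2n+1}=\hat\tau_k$ (a legal Lipschitz strategy representing $f_k$) when $n=n_k$, and constant/identity-type Lipschitz strategies otherwise; here one has to be slightly careful that the ``default'' odd rows and the ``default'' control rows can be taken Lipschitz, which is clear. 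Then $\tau=\bigotimes'_n\tau_n$ is legal and $f_\tau=f$ by the same computation $f_\tau(x)=\iota_*(x,\pi'_{2n_k+1}(x*\tau))=f_k(x)=f(x)$ for $x\in D_k$. For the converse, given $\tau\in\sf{LS}^\Lip_\xi$ one defines the partition $\seq{F_n}{n\in\omega}$ by $F_0=\{x\in X\mid (x*\tau)_0\in P_0\}$ and $F_{n+1}=\{x\in X\mid (x*\tau)_{2(n+1)}\in P_{n+1}\wedge\forall i\le n\,(x*\tau)_{2i}\notin P_i\}$; each $F_n$ is $\bDelta^0_\xi(X)$ since it is a Boolean combination of preimages of the $\bPi^0_{\mu_i}$-sets $P_i$ under the \emph{Lipschitz} (hence continuous) maps $x\mapsto (x*\tau)_{2i}$, and $\pi'_{2n+1}(\tau)\in\sf{LS}_{n\text{-}\Lip}$ induces $f_n\in\Lip(2^n)\subseteq\Lip$ with $\restr{f_\tau}{F_n}=\restr{f_n}{F_n}$. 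Hence $f_\tau\in\sf{D}^\Lip_\xi$.

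For part~\textit{ii)}, I would copy the last paragraph of the proof of Theorem~\ref{theorgames} essentially verbatim: given a winning strategy $\rho$ for $\pI$ in $G^\Lip_\xi(A,B)$, define $\sigma$ for $\pI$ in $G_\L(A,B)$ by running an auxiliary copy of $G^\Lip_\xi(A,B)$ in which $\pII$ uses a constant strategy with value some fixed $z\in P_0$ on the even rows and ``copies'' (via the identity-type strategy, applied to $\pI$'s real $y$) the behaviour forced on the first odd row; since $z\in P_0$ this $\pII$-strategy is legal, so $\rho$ against it produces, against any $y\in\Bai$, a real $\sigma*y\in X$ with $\sigma*y\in A\iff y\notin B$, and $\sigma$ is manifestly Lipschitz-with-constant-$1$ because each of $\pI$'s moves depends only on $\restr{y}{n}$. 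The one point demanding a line of justification is that the first odd row, fed through the identity strategy in $\Lip(0)$, faithfully transmits $y$; this is immediate from Proposition~\ref{theorLipstrategies} applied with $k=0$ and $f=\id$.

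The main obstacle I anticipate is purely notational rather than conceptual: one must be scrupulous in matching the indexing of the interleaved rows (the pairing $2k$ versus $2k+1$, the role of $k_n$ with $n=2k_n$ or $n=2k_n+1$, and the ``simultaneous'' decoding $y(n)\mapsto s^{2n+2}_{y(n)}$) so that the identities ``$\iota_{k\text{-}\Lip}(x*\pi'_{2k}(\tau))$ is computed from $y_{2k}$'' actually hold on the nose; once that bookkeeping is set up exactly as in Subsection~\ref{sectionpiecewise}, every step is a transcription of the corresponding step for $G^\F_\xi$ with $\F=\Lip$ and with ``$\p$-closed'' replaced by the (weaker, row-local) closure properties of $\Lip(2^k)$ guaranteed by Proposition~\ref{theorLipstrategies}.
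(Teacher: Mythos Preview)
Your overall architecture matches the paper's, and part~\textit{ii)} is fine. But the forward direction of part~\textit{i)} has a genuine gap that is precisely the point where $G^\Lip_\xi$ differs from $G^\F_\xi$: in $G^\Lip_\xi$ the $n$-th row is not a generic ``$\F$-row'' but specifically a $\Lip(2^{k_n})$-row, with $k_n$ fixed by the game. You introduce two independent indices --- $m_k$ with $f_k\in\Lip(2^{m_k})$, and $n_k$ with $D_k\leq_\W P_{n_k}$ --- and then place $\hat\tau_k$ on row $2n_k+1$. But row $2n_k+1$ only carries strategies in $\sf{LS}_{n_k\text{-}\Lip}$, so this is illegal unless $n_k\ge m_k$, which you never arrange. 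Likewise, row $2n_k$ only carries $\Lip(2^{n_k})$-strategies, so $\sigma_k$ must witness $D_k\leq_{\Lip(2^{n_k})}P_{n_k}$; your parenthetical ``replace it by a Lipschitz one if needed'' and ``by a Lipschitz map after a shift'' do not explain why such a reduction exists --- a continuous reduction to $P_{n_k}$ cannot in general be upgraded to a Lipschitz one without further argument.

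The paper repairs both points simultaneously. First, it invokes Borel determinacy (via $\SLO^\L$ restricted to Borel sets) to get $D_k\leq_\L P_{n_k}$ rather than merely $D_k\leq_\W P_{n_k}$: since $P_{n_k}$ is $\bPi^0_{\mu_{n_k}}$-complete, failure of $D_k\leq_\L P_{n_k}$ would force $P_{n_k}\leq_\L\neg D_k\in\bSigma^0_{\mu_{n_k}}$, a contradiction. Second, it merges the two constraints into a single increasing index $m_k=\max\{n_k,i_k,m_{k-1}+1\}$ (with $i_k$ least such that $f_k\in\Lip(2^{i_k})$), so that on the pair of rows $2m_k,2m_k+1$ one has both $D_k\leq_\L P_{m_k}\subseteq\Lip(2^{m_k})$-reducibility for the control row and $f_k\in\Lip(2^{m_k})$ for the function row. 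Without this alignment step your $\bigotimes'_n\tau_n$ need not be a strategy in $G^\Lip_\xi$ at all.
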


\begin{proof}
  Assume first that $f \in \sf{D}^\Lip_\xi$, and let $\{f_k\mid k \in
  \omega \} \subseteq \Lip$ and $\langle D_k\mid
  k \in \omega \rangle$ be a sequence of $\bPi^0_{<\xi}$-sets such
  that $\seq{D_k \cap X}{k \in \omega}$ is a partition of $X$ such that
  $\restr{f}{D_k}=\restr{f_k}{D_k}$. By Borel
  determinacy\footnote{Using Borel determinacy, if
    $n_k$ is such that $D_k \in \bPi^0_{\mu_{n_k}}$ then $D_k \leq_\L
    P_{n_k}$: otherwise $P_{n_k} \leq_\L \neg D_k \in
    \bSigma^0_{\mu_{n_k}}$ by $\SLO^\L$ for Borel sets, contradicting
    the $\bPi^0_{\mu_{n_k}}$-properness of $P_{n_k}$.},  we can find
  an increasing sequence $\langle n_k\mid  k \in 
  \omega \rangle$ of natural numbers such that $D_k \leq_\L P_{n_k}$.
Moreover, let $i_k$ be smallest such that $f_k \in
\Lip(2^{i_k})$, and inductively define $m_0=\max\{n_0,i_0\}$ and $m_{k+1}
= \max\{n_{k+1},i_{k+1},m_k+1\}$, so that $D_k \leq_\L P_{m_k}$ (hence also $D_k \leq_{\Lip(2^{m_k})} P_{m_k}$) and $f_k
\in \Lip(2^{m_k})$ for every $k \in \omega$. Let $\sigma_k$ be a winning
strategy for $\pII$ in $G_{m_k\text{-}\Lip}(D_k,P_{m_k})$ and
$\hat{\tau}_k \in \sf{LS}_{m_k\text{-}\Lip}$ be such that $f_k =
f_{\hat{\tau}_k}$. Finally, fix 
$y_n \notin P_n$ and for every $y \in \Bai$ let $\rho^k_y \in
\sf{LS}_{k\text{-}\Lip}$  be such that $f_{\rho^k_y}$ is constantly
equal to 
$y$.
Now define $\tau_{2n} = \sigma_k$ and $\tau_{2n+1} = \hat{\tau}_k$ if
$n=m_k$,
and $\tau_{2n} = \rho^n_{y_n} = \tau_{2n+1}$ otherwise. If we construct
the strategy $\tau = \bigotimes'_n \tau_n$ for $\pII$ in $G^\Lip_\xi$ as explained above, it
is not hard to check that $x \in D_k$ if and only if 
$(x * \tau)_{2m_k} \in P_{m_k}$, and that in this case $(x*\tau)_{2n} \notin P_n$ for
$n \neq m_k$ and
\[ f_\tau(x) = (x*\tau)_{2m_k+1} = \iota_{m_k\text{-}\Lip}(x *
\hat{\tau}_k) = f_k(x) = f(x).\]

Conversely, let $F_n$ be the set of those $x$ for which $n$ is least
such that $(x * \tau)_{2n}\in 
P_n$. Clearly these $F_n$'s form a $\bDelta^0_\xi$-partition of
$X$ and, as already observed, $\pi'_{2n+1}(\tau)$ induces a function $f_n
\in \Lip(2^n)$. Thus $f_\tau = \bigcup_{n \in \omega}(\restr{f_n}{F_n})$
and we are done.

Finally, the second part of the theorem can be proved as in Theorem
\ref{theorgames} (although the coding of the strategies involved is 
more complicated).
\end{proof}

Theorem \ref{theorGLipxi} clearly allows us to reprove all the
results of Section 
\ref{sectionaxioms} 
(except for Proposition
\ref{propgamessuccessor} and its corollary),  but using
$G^\Lip_\xi$ and $\sf{D}^\Lip_\xi$ instead of $G^\W_\xi$ and
$\sf{D}^\W_\xi$. The obstacle in reproving also  Proposition
\ref{propgamessuccessor}
  simply relies in the definitions of the operations $\Sigma^\xi$ and
  $\Pi^\xi$: nevertheless, using the ``coding'' introduced in the definition of $G^\Lip_\xi$ it is possible to define a set
  $\hat{R}_\xi$ and two new operations 
  $\hat{\Sigma}^\xi$ and $\hat{\Pi}^\xi$ such that $\Sigma^\xi(A)
  \equiv_\W \hat{\Sigma}^\xi(A)$ and $\Pi^\xi(A) \equiv_\W
  \hat{\Pi}^\xi(A)$ for every $A \subseteq \Bai$, and with the further
  property that 
  Proposition \ref{propgamessuccessor} holds whenever we replace all 
  occurrences of $\sf{D}^\W_\xi$, $\Sigma^\xi$, $\Pi^\xi$ and
  $R_\xi$ in its statement with $\sf{D}^\Lip_\xi$, $\hat{\Sigma}^\xi$,
  $\hat{\Pi}^\xi$ and $\hat{R}_\xi$.

From the construction of $G^\Lip_\xi$ we can now infer which are
the minimal
conditions on the $\F_n$'s under which one can carry out the
constructions above and define the games $G^{\vec{\F}}_\xi$ which represent
 $\sf{D}^{\vec{\F}}_\xi$.

\begin{defin} \label{defdelayable}
Let $G_* = (X,M_*,R_*,\iota_*)$ be a reduction game and $\F_*$ be the
set of functions  
induced by legal strategies for $\pII$ in $G_*$.
We say that $G_*$
(or $\F_*$) is \emph{delayable}  
if for every $n \in \omega$ the set $\F_*$ is still represented by each of the
new reduction  
games $G^n_* = (X, M^n_*, R^n_*, \iota^n_*)$ defined by:

- $M^n_* = M_* \cup \{ \p \}$, where $\p$ is a new symbol not in $M_*$;

- $R^n_* = \{ (x,y) \in \Bai \times {}^\omega (\omega \cup M^n_*) \mid
{\forall k (y(k) = \p \iff k < n)} \wedge {(x, \seq{y(n+k)}{k \in
    \omega}) \in R_*})$;

- $\iota^n_* \colon R^n_* \to \Bai \colon (x,y) \mapsto
\iota_*(x,\seq{y(n+k)}{k 
  \in \omega})$.
\end{defin}

As for $\p$-closure, one could note that the property of being
delayable corresponds to the property of being closed under
right-composition with Lipschitz functions from $X$ into
itself.  
From this definition and from the construction above, it turns out
that we can still define reduction games
representing 
$\sf{D}^{\vec{\F}}_\xi$, $\tilde{\sf{D}}^{\vec{\F}}_\xi$ and $\lim
\vec{\F}$ whenever each element $\F_n$ of the sequence $\vec{\F}$
contains the identity function and is represented by a parametrized class
$\G_*$ of \emph{delayable} (rather than $\p$-closed) reduction games.

This technical condition is optimal if we want to define reduction
games like those presented in Section \ref{sectionplayable}, in
which $\pII$ has to fill in a table with $\omega$-many rows: in fact,
any reduction game is by definition formalizable as a game on
$\omega$, and this essentially means that in each turn $\pII$ can make
at most a 
finite numbers of moves on a finite number of rows of her table,
condition which easily leads to our definition of delayability. However,
there are still examples of natural playable sets of functions (and
even of reducibilities for sets of reals, like the set $\L$) which are
clearly non-delayable. This leaves open the following question:

\begin{question}
Given $\xi>1$ and a non-delayable playable set of functions $\F$, are
there reduction games representing the  classes of 
functions  $\sf{D}^\F_\xi$ and $\tilde{\sf{D}}^\F_\xi$? In particular,
are there reduction games representing $\sf{D}^\L_\xi$ and
$\tilde{\sf{D}}^\L_\xi$? 
\end{question}

\begin{acknowledgement}
 Research partially supported by FWF (Austrian Research
 Fund) through 
  Project number P 19898-N18.
\end{acknowledgement}


\providecommand{\WileyBibTextsc}{}
\let\textsc\WileyBibTextsc
\providecommand{\othercit}{}
\providecommand{\jr}[1]{#1}
\providecommand{\etal}{~et~al.}

\end{document}